\newcommand{\pa}{\partial}
\renewcommand{\div}{{\rm div}}
\newcommand{\curl}{{\rm curl}}
\newcommand{\norm}[1]{\|#1\|}
\newtheorem{thm}{Theorem}[section]
\newtheorem{lem}[thm]{Lemma}
\newtheorem{cor}[thm]{Corollary}
\newtheorem{rem}[thm]{Remark}
\newtheorem{defi}[thm]{Definition}
\numberwithin{equation}{section}
\begin{document}

\title[Weak Serrin-type blowup criterion for the 3D full CNS]
{Weak Serrin-type criterion for the 3D full compressible Navier-Stokes equations}
\author[M.H. Xie]{Minghong Xie}
\address[Minghong Xie]{ Center for Applied Mathematics of Guangxi, Guangxi Normal University,
Guilin, Guangxi 541004, People's Republic of China}
\email{xiemh0622@hotmail.com}
\author[S.G. Xu]{Saiguo Xu}
\address[Saiguo Xu]{Center for Applied Mathematics of Guangxi, Guangxi Normal University,
Guilin, Guangxi 541004, People's Republic of China}
\email{xsgsxx@126.com}
\author[Y.H. Zhang]{Yinghui Zhang*}
\address[Yinghui Zhang] {Center for Applied Mathematics of Guangxi, Guangxi Normal University,
Guilin, Guangxi 541004, People's Republic of China} \email{yinghuizhang@mailbox.gxnu.edu.cn}

\thanks{* Corresponding author.}

\thanks{This work was supported by National Natural Science
Foundation of China $\#$12271114, Guangxi Natural Science Foundation $\#$2024GXNSFDA010071, $\#$2019JJG110003, $\#$2019AC20214, Science and Technology Project of Guangxi $\#$GuikeAD21220114, the Innovation Project of Guangxi Graduate Education $\#$JGY2023061, and the Key Laboratory of Mathematical Model and Application (Guangxi Normal University), Education Department of Guangxi Zhuang Autonomous Region.}

\date{\today}

\begin{abstract}
We investigate weak Serrin-type blowup criterion of the three-dimensional full compressible Navier-Stokes equations for the Cauchy problem, Dirichlet problem and Navier-slip boundary condition. It is shown that  the
strong or smooth solution exists globally if the density is bounded from above, and either the absolute
temperature
or velocity satisfies the weak Serrin's condition. Therefore, if the weak Serrin norm of the absolute
temperature or the velocity remains bounded, it is not possible for other
kinds of singularities (such as vacuum states vanish or vacuum appears in the non-vacuum region
or even milder singularities) to form before the density becomes unbounded. In particular, this criterion extends
those Serrin-type blowup criterion  results in (Math. Ann. 390 (2024): 1201-1248; Arch. Ration. Mech. Anal. 207(2013): 303-316).
Furthermore, as a by-product, for the isentropic compressible Navier-Stokes
equations, we succeed in removing the technical assumption $\rho_0\in L^1$ in (J. Lond. Math. Soc. (2) 102(2020): 125--142).
 The initial data can be arbitrarily large and allow to contain vacuum states here.
\end{abstract}

\maketitle

{\small
\keywords {\noindent {\bf Keywords:} {Full compressible Navier-Stokes; blowup; strong solution; vacuum.}
\smallskip
\newline
\subjclass{\noindent {\bf 2020 Mathematics Subject Classification:} 35B44; 76N06; 76N10}
}

\section{Introduction}
The motion of a general viscous compressible, heat-conductive, ideal polytropic fluid in a domain $\Omega\subset\mathbb{R}^3$ is governed by the following full compressible Navier-Stokes equations
\begin{equation}\label{FCNS-eq}
  \begin{cases}
    \pa_t\rho+\div(\rho u)=0, \\
    \pa_t(\rho u)+\div(\rho u\otimes u)-\mu\Delta u-(\mu+\lambda)\nabla\div u+\nabla P(\rho)=0,\\
    c_v[\pa_t(\rho\theta)+\div(\rho u\theta)]-\kappa\Delta\theta+P\div u=2\mu|\mathcal{D}(u)|^2+\lambda(\div u)^2,
  \end{cases}
\end{equation}
where $\rho$, $u$, $\theta$, $P=R\rho\theta$ ($R>0$) denote the density, velocity, absolute temperature and pressure respectively, $\mu$ and $\lambda$ represent the shear viscosity and bulk viscosity coefficients satisfying the physical restrictions:
\begin{equation}\label{viscosity-condition}
  \mu>0,\quad \lambda+\frac{2}{3}\mu\geq0.
\end{equation}
The positive constants $c_v$ and $k$ are the heat capacity and the ratio of the heat conductivity coefficient over the heat capacity respectively.
In addition, $\mathcal{D}(u)$ is the deformation tensor
\begin{equation*}
  \mathcal{D}(u)=\frac{1}{2}(\nabla u+\nabla u^t).
\end{equation*}
The system \eqref{FCNS-eq} will be equipped with initial data
\begin{equation}\label{initial-data}
  (\rho, u, \theta)(x,0)=(\rho_0, u_0, \theta_0)(x), \quad x\in\Omega,
\end{equation}
and three types of boundary conditions:
\begin{itemize}
  \item Cauchy problem:
  \begin{equation}\label{Cauchy-condition}
    \Omega=\mathbb{R}^3~\textrm{and }(\rho,u,\theta)\rightarrow(\tilde{\rho},0,\tilde{\theta}),~\hbox{as}~|x|\rightarrow\infty,
  \end{equation}
  \noindent where constants $\tilde{\rho}$, $\tilde{\theta}\geq0$;
  \item Dirichlet problem: $\Omega$ is a bounded smooth domain in $\mathbb{R}^3$, and
      \begin{equation}\label{Dirichlet-condition}
        u|_{\pa\Omega}=0,\,\frac{\pa\theta}{\pa n}|_{\pa\Omega}=0;
      \end{equation}
  \item Navier-slip boundary condition: $\Omega\subset\mathbb{R}^3$ is a bounded and simply connected smooth domain, and
      \begin{equation}\label{Navier-slip-condition}
        u\cdot n=0,\, \curl u\times n=0 \textrm{ on }\pa\Omega,\quad \frac{\pa\theta}{\pa n}|_{\pa\Omega}=0,
      \end{equation}
      \noindent where $n$ is the unit outer normal to $\pa\Omega$.

\end{itemize}

There are large amounts of literature on the large time existence and behavior of solutions to the full
compressible Navier-Stokes equations \eqref{FCNS-eq}. For the one dimensional problem,
when the initial density and temperature are strictly positive, the existence,
uniqueness and regularity of (weak, strong or smooth)
solutions have been investigated extensively by many people. We refer to \cite{Kazhikhov1977,Kazhikhov1982} and references therein. The local well-posedness of multi-dimensional problem was studied by Nash \cite{Nash1958, Nash1962} and Serrin \cite{Serrin1959} in the absence of vacuum.
When the initial density is allowed to vanish, the existence and uniqueness of local strong solution was obtained in \cite{Cho2006-1}.
Matsumura and Nishida \cite{Matsumura1980,Matsumura1983} first obtained the global existence of strong solutions with initial data close to the non-vacuum equilibrium. Later, Hoff \cite{Hoff1995,Hoff1997} established the global weak solutions with strictly positive initial density and temperature for discontinuous initial data.
When initial vacuum states are allowed, the global existence and uniqueness of classical (or strong) solutions have been obtained by Huang and Li \cite{Huang-Li2018} for small initial energy and possible large oscillations, and Wen and Zhu \cite{Wen2017} for small initial mass, respectively. Recently, Li \cite{Li-JK2020} showed the global well-posedness of strong solutions under the assumption that certain scaling invariant quantity is small.
For the case that $\tilde{\rho}=0$ and $\tilde{\theta}=0$, Xin \cite{Xin1998} first proved that when the initial
density has compact support, there is no smooth solution to the Cauchy problem of the
full compressible Navier-Stokes equations \eqref{FCNS-eq} without heat conduction. See also the  generalization to the case that the initial data have an isolated mass group \cite{Xin2013}.

Therefore, it is important to study the mechanism of blowup and the structure of posssible singularities of strong (or smooth) solutions to the full compressible
Navier-Stokes equations \eqref{FCNS-eq}  with the initial data allowed to vanish.

 In the following, let us briefly review some former results closely related to the blowup criterion of the system \eqref{FCNS-eq}.

\begin{itemize}
  \item Cho-Choe-Kim \cite{Cho2004} first stated a blowup criterion:
  \begin{equation*}
    \limsup_{T\rightarrow T^{*}}(\norm{\rho}_{H^1\cap W^{1,q}}+\norm{u}_{D_0^1})=\infty,
  \end{equation*}
  where $q\in (3,6]$, $D_0^1$ will be defined in \eqref{Notation-Sobolev}, and $T^*$ is  the maximal time of
existence of a strong (or classical) solution.
\item  Huang-Li-Wang \cite{Huang-Li-Wang2013} established  the following Serrin-type blowup criterion:
\begin{equation}\label{blowup-Huang-Li-Wang}
  \limsup_{T\rightarrow T^{*}}(\norm{\div u}_{L^{\infty}(0,T;L^{\infty})}+\norm{u}_{L^s(0,T;L^r)})=\infty,
\end{equation}
for any $r\in(3,\infty]$ and $s\in[2,\infty]$ satisfying $\frac{2}{s}+\frac{3}{r}\leq 1$. It is easy to check that $\norm{\div u}_{L^{\infty}(0,T;L^{\infty})}$ in \eqref{blowup-Huang-Li-Wang} can be replaced by $\norm{\rho}_{L^{\infty}(0,T;L^{\infty})}$ (see for instance \cite{Huang-Li2013}).
The similar blowup criterion also holds for the barotropic viscous compressible fluids \cite{Huang2011-2}, which is later extended by Wang \cite{Wang2020} to a more general weak $L^r$-space case as following:
\begin{equation}\label{blowup-Wang2020}
  \limsup_{T\rightarrow T^{*}}(\norm{\rho}_{L^{\infty}(0,T;L^{\alpha})}+\norm{\sqrt{\rho}u}_{L^s(0,T;L^r_w)})=\infty,
\end{equation}
for any $\alpha$, $s$ and $r$ satisfying $\alpha\geq\alpha_0$ and
\begin{equation}\label{index-blowup}
  \begin{cases}
    \frac{2}{s}+\frac{3}{r}\leq 1, & \textrm{if $r=\infty$, or $\alpha_0=\infty$ and $3<r<\infty$},  \\
    \frac{2}{s}+\frac{3}{r}<1, & \textrm{if $\alpha_0<\infty$ and $3<r<\infty$}.
  \end{cases}
\end{equation}
  \item Fan-Jiang-Ou \cite{Fan-Jiang2010} gave the following blowup criterion:
      \begin{equation}\label{blowup-Fan-Jiang}
        \limsup_{T\rightarrow T^{*}}(\norm{\theta}_{L^{\infty}(0,T;L^{\infty})}+\norm{\nabla u}_{L^1(0,T;L^{\infty})})=\infty
      \end{equation}
      under a stringent condition that
      \begin{equation}\label{restrictive-condition1}
        7\mu>\lambda.
      \end{equation}
      Later, Huang-Li \cite{Huang-Li2009} and Huang-Li-Xin \cite{Huang2011-1} succeeded in removing the restriction \eqref{restrictive-condition1} to establish the following blowup criterion:
      \begin{equation*}
        \limsup_{T\rightarrow T^{*}}(\norm{\theta}_{L^{\infty}(0,T;L^{\infty})}+\norm{\mathcal{D} (u)}_{L^1(0,T;L^{\infty})})=\infty.
      \end{equation*}
  \item In the absence of a vacuum, Sun-Wang-Zhang \cite{Sun-Wang-Zhang2011} obtained a blowup criterion in terms of the density and temperature for the initial-boundary value problem:
      \begin{equation*}
        \limsup_{T\rightarrow T^{*}}(\norm{\theta}_{L^{\infty}(0,T;L^{\infty})}+\norm{(\rho,\rho^{-1})}_{L^{\infty}(0,T;L^{\infty})})=\infty
      \end{equation*}
      under the restrictive condition \eqref{restrictive-condition1}.
      \item   Wen-Zhu \cite{Wen2013} removed the restriction $\norm{\rho^{-1}}_{L^{\infty}(0,T;L^{\infty})}$ for the Cauchy problem with initial vacuum and vanishing far field conditions $\tilde{\rho}=\tilde{\theta}=0$ to get the following blowup criterion:
\begin{equation}\label{blowup-Nash}
  \limsup_{T\rightarrow T^{*}}(\norm{\rho}_{L^{\infty}(0,T;L^{\infty})}+\norm{\theta}_{L^{\infty}(0,T;L^{\infty})})=\infty,
\end{equation}
under a more stringent condition that
\begin{equation}\label{restrictive-condition2}
        3\mu>\lambda.
      \end{equation}
      \item Recently, Feireisl-Wen-Zhu \cite{Feireisl2024} succeeded in removing the restriction \eqref{restrictive-condition1} to obtain the following blowup criterion for the Cauchy problem and Dirichlet boundary value problem:
\begin{equation}\label{blowup-Feireisl}
  \limsup_{T\rightarrow T^{*}}(\norm{\rho}_{L^{\infty}(0,T;L^{\infty})}+\norm{\theta-\tilde{\theta}}_{L^s(0,T;L^r)})=\infty,
\end{equation}
for any $r\in(\frac{3}{2},\infty]$ and $s\in[1,\infty]$ satisfying $\frac{2}{s}+\frac{3}{r}\leq 2$.
This criterion can be regarded as a rigorous justification of Nash's conjecture \cite{Nash1958}:
possible singularities must first appear at the level of thermodynamic variables--the density and the temperature--and not for the fluid velocity  in the equations of fluid dynamics.
\end{itemize}

\bigskip
The main purpose of this paper is to improve all the previous blowup criterion results for the
full compressible Navier-Stokes equations \eqref{FCNS-eq} by removing the stringent condition
\eqref{restrictive-condition1}, and allowing initial vacuum states, and furthermore, instead of \eqref{blowup-Huang-Li-Wang} and \eqref{blowup-Feireisl},
 by describing
the blowup mechanism only in terms of a weak Serrin-type criterion.\par
Before stating our main results, let us introduce the following notations and conventions used throughout this paper. We denote
\begin{equation*}
  \int f=\int_{\Omega}fdx.
\end{equation*}
For $1\leq r\leq\infty$ and integer $k\geq1$, we denote the standard Sobolev spaces as follows:
\begin{equation}\label{Notation-Sobolev}
  \begin{cases}
     L^r=L^r(\Omega),\,D^{k,r}=\{u\in L_{loc}^1(\Omega): \norm{\nabla^ku}_{L^r}<\infty\},\\
     W^{k,r}=L^r\cap D^{k,r},\,H^k=W^{k,2},\, D^k=D^{k,2},\\
     D_0^1=\{u\in L^6: \norm{\nabla u}_{L^2}<\infty,\textrm{ and \eqref{Cauchy-condition} or \eqref{Dirichlet-condition} or \eqref{Navier-slip-condition} holds}\},\\
    H_0^1=L^2\cap D_0^1,\,\norm{u}_{D^{k,r}}=\norm{\nabla^ku}_{L^r}.
  \end{cases}
\end{equation}

Next, let us give the definition of strong solutions to the system \eqref{FCNS-eq} in $\Omega\times(0,T)$.

\begin{defi}[Strong solutions]\label{defi-strong-solution}
  $(\rho,u,\theta)$ is called a strong solution to the system \eqref{FCNS-eq} in $\Omega\times(0,T)$, if $(\rho,u, \theta)$ satisfies \eqref{FCNS-eq} a.e. in $\Omega\times(0,T)$, and for some $q\in(3,6]$,
  \begin{equation}\label{strong-sol-defi}
    \begin{aligned}
    &\rho\geq0, \rho-\tilde{\rho}\in C([0,T], W^{1,q}\cap H^1),\, \rho_t\in C([0,T], L^{q}\cap L^2),\\
    &(u,\theta-\tilde{\theta})\in C([0,T], D_0^1\cap D^2)\cap L^2(0,T; D^{2,q}),\\
    &(\sqrt{\rho}u_t,\sqrt{\rho}\theta_t)\in L^{\infty}(0,T; L^2),\,(u_t,\theta_t)\in L^2(0,T; D_0^1).
    \end{aligned}
  \end{equation}
\end{defi}

\noindent\textbf{Initial data.} To match the regularity class specified in Definition \ref{defi-strong-solution}, we assume that the initial data satisfy $\rho_0\geq0$, $\rho_0-\tilde{\rho}\in W^{1,q}\cap H^1$ for some $q\in(3,6]$ and $(u_0,\theta_0-\tilde{\theta})\in D_0^1\cap D^2$. In addition, we suppose that $\rho_0|u_0|^2+\rho_0|\theta_0-\tilde{\theta}|^2\in L^1$, and the following compatibility conditions:
\begin{equation}\label{compatibility-condition}
  \begin{cases}
    \mu\Delta u_0+(\mu+\lambda)\nabla\div u_0-\nabla P(\rho_0,\theta_0)=\sqrt{\rho_0}g_1, \\
    \kappa\Delta\theta_0+2\mu|\mathcal{D}(u_0)|^2+\lambda(\div u_0)^2=\sqrt{\rho_0}g_2,
  \end{cases}
\end{equation}
for some $g_1,\,g_2\in L^2$. Finally, we require $(\rho_0,u_0,\theta_0)$ to satisfy three types of boundary conditions as \eqref{Cauchy-condition}, \eqref{Dirichlet-condition} or \eqref{Navier-slip-condition}.

\begin{rem}\label{rem-local-existence}
Under the stated assumptions on the initial data above, Cho-Choe-Kim \cite{Cho2006-1} proved the local existence of strong solutions except for the boundary condition $\frac{\pa\theta}{\pa n}|_{\pa\Omega}=0$ and the Navier-slip boundary condition \eqref{Navier-slip-condition}. However, we can expect that the local existence in such two cases could be established in a way similar to \cite{Cho2006-1}, especially for the Navier-slip boundary condition, by introducing the Lam\'{e} operator and its spectrum in \cite{Hoff2012}. In particular, the strong solution always exists on a small time interval for the initial data belonging to the class specified above. Furthermore, the life span of the local strong solutions can be always extended beyond the existing one as long as uniform bounds are obtained. Thus any strong solution is defined upon a maximal existence time $T^*>0$.
\end{rem}

Now, we are in a position to state our main results, which are given in the following three theorems.
\begin{thm}\label{thm-blowup-FCNS}
  Let $(\rho,u,\theta)$ be a strong solution to the initial-boundary value problem \eqref{FCNS-eq}\eqref{initial-data} with \eqref{Cauchy-condition} or \eqref{Dirichlet-condition} or \eqref{Navier-slip-condition} satisfying \eqref{strong-sol-defi}. If $T^*<\infty$ is the maximal time of existence, then
  \begin{equation}\label{blowup-goal1}
  \limsup_{T\rightarrow T^{*}}(\norm{\rho}_{L^{\infty}(0,T;L^{\infty})}+\norm{\theta-\tilde{\theta}}_{L^s(0,T;L^r_w)})=\infty,
\end{equation}
for any $s\in[1,\infty]$ and $r\in(\frac{3}{2},\infty]$ satisfying
\begin{equation}\label{index-blowup1}
  \frac{2}{s}+\frac{3}{r}\leq 2,
\end{equation}
where $L^r_w$ denotes the weak-$L^r$ space.
\end{thm}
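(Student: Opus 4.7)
We argue by contradiction: assume
\[
M_0:=\|\rho\|_{L^\infty(0,T^*;L^\infty)}+\|\theta-\tilde\theta\|_{L^s(0,T^*;L^r_w)}<\infty,
\]
and aim to derive, uniformly in $T<T^*$, every norm appearing in \eqref{strong-sol-defi}. Combined with Remark \ref{rem-local-existence}, this contradicts the maximality of $T^*$. The basic energy identity and a test of the internal-energy equation against $\theta-\tilde\theta$, together with the $L^\infty$ bound on $\rho$, furnish the first layer of bounds: $\sqrt\rho u\in L^\infty_t L^2$, $\nabla u\in L^2_{t,x}$, $\sqrt\rho(\theta-\tilde\theta)\in L^\infty_t L^2$ and $\nabla\theta\in L^2_{t,x}$, all depending only on $M_0$ and the initial data.

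\textbf{Critical step.} Test the momentum equation against $u_t$ to produce
\[
\tfrac{1}{2}\tfrac{d}{dt}\!\int\!\bigl(\mu|\nabla u|^2+(\mu+\lambda)(\div u)^2\bigr)+\int\rho|u_t|^2 = \int P\div u_t-\int\rho u\cdot\nabla u\cdot u_t.
\]
Integrate the pressure term by parts in time and substitute $P_t=-u\cdot\nabla P-P\div u+R\rho u\cdot\nabla\theta+R\rho\theta_t$, a consequence of the continuity and internal-energy equations. The surviving worst terms contain the pressure $P=R\rho\theta$, whose $\theta$-factor lies only in $L^s_tL^{r,\infty}$. Handle them through the O'Neil--H\"older pairing
\[
\|fg\|_{L^1}\le C\|f\|_{L^{r,\infty}}\|g\|_{L^{r',1}},\qquad r'=\tfrac{r}{r-1},
\]
combined with Lorentz-space interpolation to place the paired velocity factor into the small Lorentz space $L^{r',1}$. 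The resulting dual quantity is then absorbed into $\int\rho|u_t|^2+\int|\nabla u|^2$ via effective-viscous-flux estimates for $G:=(2\mu+\lambda)\div u-P$ and vorticity $\omega:=\curl u$, which satisfy $\Delta G=\div(\rho u_t+\rho u\cdot\nabla u)$ and $\mu\Delta\omega=\curl(\rho u_t+\rho u\cdot\nabla u)$. The scaling condition $\tfrac{2}{s}+\tfrac{3}{r}\le 2$ is exactly the threshold under which this absorption closes in Gronwall form, delivering
\[
\|\nabla u\|_{L^\infty_tL^2}+\|\sqrt\rho u_t\|_{L^2_{t,x}}\le C(M_0).
\]

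\textbf{Bootstrap and closure.} With the critical bound in hand, elliptic regularity pushes $\nabla u$ into $L^2_tL^6$ and forces the viscous heating $|\nabla u|^2$ into $L^2_{t,x}\cap L^\infty_tL^1$. Testing the internal-energy equation against $\theta_t$ then yields $\|\nabla\theta\|_{L^\infty_tL^2}+\|\sqrt\rho\theta_t\|_{L^2_{t,x}}$. The transport equation for $\rho$, combined with the $\|\div u\|_{L^1_tL^\infty}$-bound extracted from $G$ and $\omega$, propagates $W^{1,q}$-regularity of $\rho$, and $D^{2,q}$-bounds on $(u,\theta)$ follow from standard elliptic (Lam\'e-system in the Navier-slip case) theory applied to the momentum and temperature equations. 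The decisive obstacle is the critical step: passing from the classical $L^r$ hypothesis of \cite{Feireisl2024} to the weaker $L^{r,\infty}$ one kills the usual $L^r\cdot L^{r'}$ H\"older pairing, so one must produce a sharp $L^{r',1}$ estimate on velocity-type quantities without wasting scaling margin---this is the chief technical challenge preserved uniformly across the three boundary settings.
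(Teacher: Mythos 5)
Your overall strategy (contradiction, weak-$L^r$ hypothesis on $\theta$, momentum equation tested against $u_t$, O'Neil/Lorentz interpolation to exploit the sharp scaling $\tfrac{2}{s}+\tfrac{3}{r}\leq 2$) is the right skeleton and matches the paper's. However, there is a genuine gap in the ``critical step,'' and it is precisely the part the paper identifies as its main new idea.

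When you test the momentum equation against $u_t$, the convection term produces
\[
-\int\rho u\cdot\nabla u\cdot u_t \;\leq\; \tfrac14\int\rho|u_t|^2 + C\int\rho|u|^2|\nabla u|^2,
\]
and nothing in your outline explains how $\int\rho|u|^2|\nabla u|^2$ is to be controlled. Under the hypothesis of this theorem you have no weak Serrin bound on $u$ (only on $\theta$), so this term cannot be handled by the $L^{r,\infty}\times L^{r',1}$ pairing you describe; it genuinely requires a prior $L^4$-type estimate on $\sqrt\rho u$. That $L^4$ estimate is itself nontrivial: multiplying the momentum equation by $4|u|^2u$ yields a cross term $8(\lambda+\mu)\int \div u\,|u|\,u\cdot\nabla|u|$, which after Young's inequality leaves a residual $C\int|u|^2|\div u|^2$ that cannot be absorbed into $\mu\int|u|^2|\nabla u|^2$ when $\lambda$ is large (it would reintroduce a restriction like $7\mu>\lambda$). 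The paper's key observation, which your proposal omits entirely, is to test the \emph{temperature} equation $\eqref{FCNS-eq}_3$ against $|u|^2$; this produces a dissipation $2\mu\int|\mathcal{D}(u)|^2|u|^2 + \lambda\int|\div u|^2|u|^2$ on the good side, precisely controlling the troublesome $\int|u|^2|\div u|^2$ term and permitting the $L^4$ estimate to close for all admissible $(\mu,\lambda)$. Without this step your Gronwall loop does not close.

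A smaller but related issue: your ``first layer'' claims that $\sqrt\rho(\theta-\tilde\theta)\in L^\infty_t L^2$ and $\nabla\theta\in L^2_{t,x}$ follow from the basic energy identity plus testing the internal-energy equation against $\theta-\tilde\theta$. In fact, that test produces the term $\int|\theta-\tilde\theta|\,|\nabla u|^2$, whose weak-$L^r$ treatment requires $\|\nabla u\|_{L^6}$, which in turn requires $\|\sqrt\rho\dot u\|_{L^2}$ via the effective-viscous-flux estimate. So the zeroth-order $\theta$-bound and the first-order $u$-bound are coupled and must be closed simultaneously, together with the $L^4$-in-$u$ estimate and the temperature-equation trick above; they cannot be stacked in the clean first-layer/critical-step/bootstrap order you describe.
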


\begin{rem}
  Compared  to \eqref{blowup-Feireisl} in Feireisl-Wen-Zhu \cite{Feireisl2024}, the main novelty lies in that
  the Serrin norm of the absolute
temperature  is relaxed to weak Serrin norm, and the Navier-slip boundary condition is also considered in our case.
\end{rem}

\begin{thm}\label{thm-blowup-FCNS-velocity}
Let $(\rho,u,\theta)$ be a strong solution to the initial-boundary value problem \eqref{FCNS-eq}\eqref{initial-data} with \eqref{Cauchy-condition} or \eqref{Dirichlet-condition} or \eqref{Navier-slip-condition} satisfying \eqref{strong-sol-defi}.
If $T^*<\infty$ is the maximal time of existence, then
 \begin{equation}\label{blowup-goal2}
  \limsup_{T\rightarrow T^{*}}(\norm{\rho}_{L^{\infty}(0,T;L^{\infty})}+\norm{u}_{L^s(0,T;L^r_w)})=\infty,
\end{equation}
for any  $s\in[2,\infty]$ and $r\in(3,\infty]$ satisfying
 \begin{equation}\label{index-blowup2}
  \frac{2}{s}+\frac{3}{r}\leq 1.
\end{equation}
\end{thm}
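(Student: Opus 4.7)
The strategy is to argue by contradiction and reduce Theorem~\ref{thm-blowup-FCNS-velocity} to Theorem~\ref{thm-blowup-FCNS}. Suppose that
$$\sup_{T<T^*}\bigl(\|\rho\|_{L^\infty(0,T;L^\infty)}+\|u\|_{L^s(0,T;L^r_w)}\bigr)\le M_0<\infty.$$
I will show that this forces a weak Serrin-type bound on the temperature, namely $\|\theta-\tilde\theta\|_{L^{s_1}(0,T^*;L^{r_1}_w)}<\infty$ for some pair $(s_1,r_1)$ with $2/s_1+3/r_1\le 2$ and $r_1>3/2$. Applied to the pair $(\rho,\theta)$, Theorem~\ref{thm-blowup-FCNS} then contradicts the maximality of $T^*$, yielding \eqref{blowup-goal2}.

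First I would record the a priori control provided by the basic physical energy: integrating the momentum and internal-energy equations yields $\sup_t\int(\rho|u|^2+c_v\rho(\theta-\tilde\theta))\le C$ and $\int_0^{T^*}\!\int|\nabla u|^2\le C$, so that by Sobolev embedding $u\in L^2(0,T^*;L^6)$. Interpolating with the hypothesis $u\in L^s(0,T^*;L^r_w)$, and using the Lorentz--Hölder pairing $L^{r,\infty}\cdot L^{r',1}\hookrightarrow L^1$ together with the three-dimensional embedding $H^1\hookrightarrow L^{6,2}$, one upgrades the space-time integrability of $u$ and of $u\otimes u$; this upgrade is the workhorse for everything that follows.

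Next I would test the momentum equation against the material derivative $\dot u=u_t+u\cdot\nabla u$ and integrate. The convective term $\int\rho(u\cdot\nabla u)\cdot\dot u$ is handled by Lorentz--Hölder and Gagliardo--Nirenberg, exploiting the time slice bound $\|u(t)\|_{L^r_w}$; the pressure term $\int\nabla P\cdot\dot u$ is treated via integration by parts against the continuity and temperature equations, producing a controlled coupling with $\theta$. Combined with elliptic regularity on the Lamé system written as $\mu\Delta u+(\mu+\lambda)\nabla\div u=\rho\dot u+\nabla P$, this should yield
$$\sup_{t\in[0,T^*)}\|\nabla u(t)\|_{L^2}^2+\int_0^{T^*}\!\!\int\rho|\dot u|^2\le C(1+\mathcal T),$$
for a functional $\mathcal T$ of the temperature that is closed at the next step. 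In the Navier-slip case the boundary terms generated by integration by parts against $\nabla\div u$ and $\curl u$ vanish thanks to $u\cdot n=0$ and $\curl u\times n=0$, so the same argument goes through.

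Turning to the temperature equation, with $\rho$ bounded and $\nabla u\in L^\infty(0,T^*;L^2)\cap L^2(0,T^*;L^{q_0})$ for some $q_0>6$ (from the Lamé estimate above with right-hand side $\rho\dot u+\nabla P\in L^2$), the source $2\mu|\mathcal D(u)|^2+\lambda(\div u)^2$ is summable in $L^2(0,T^*;L^3)$, and the transport term $\rho u\cdot\nabla\theta$ is controlled by the Serrin bound on $u$. An $L^p$-energy argument (testing against $|\theta-\tilde\theta|^{p-2}(\theta-\tilde\theta)$ for a well-chosen $p$) then gives $\theta-\tilde\theta\in L^\infty(0,T^*;L^2)\cap L^2(0,T^*;H^1)$, which by interpolation and embedding upgrades to the weak-Serrin bound on $\theta-\tilde\theta$ required for the reduction to Theorem~\ref{thm-blowup-FCNS}.

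The main obstacle, in my view, is closing the $\int\rho|\dot u|^2$ estimate with only weak $L^r_w$ control on $u$. In a standard $L^r$ setting, Hölder's inequality bounds the convective term immediately, but in the Lorentz setting one must rely on the pairing $L^{r,\infty}\times L^{r',1}\hookrightarrow L^1$ and on Sobolev embeddings into Lorentz spaces to avoid losing a factor of $\|\nabla u\|_{L^2}^2$. The borderline $2/s+3/r=1$ case is, as in Wang 2020, the most delicate: the coupling constant does not decay a priori, and one must close by a Grönwall argument combined with the absolute continuity of $t\mapsto\int_0^t\|u\|_{L^r_w}^s\,d\tau$, which can be made arbitrarily small on sufficiently short subintervals of $[0,T^*)$.
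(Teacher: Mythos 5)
Your overall idea---derive a weak Serrin bound on $\theta-\tilde\theta$ from the hypothesis $\|\rho\|_{L^\infty L^\infty}+\|u\|_{L^s L^r_w}<\infty$ and then invoke Theorem~\ref{thm-blowup-FCNS} to conclude---is a legitimate endgame that does differ from the paper's Section~4, which instead redoes the higher-order estimates directly in the style of \cite{Huang-Li-Wang2013,Feireisl2024}. In fact, the paper's own Lemmas~\ref{lem-theta-L^2-blowup-u}--\ref{lem-u-1rd-estimate-u-close} already deliver $\sqrt{\rho}(\theta-\tilde\theta)\in L^\infty L^2$ and $\nabla\theta\in L^2L^2$, which with \eqref{L2-control-lem} (or Sobolev in $\mathbb{R}^3$) gives $\theta-\tilde\theta\in L^2(0,T^*;L^6)$, a pair $(s_1,r_1)=(2,6)$ satisfying $\frac{2}{s_1}+\frac{3}{r_1}=\frac{3}{2}\le 2$; your reduction would apply from there. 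But you still have to prove those same lemmas, so the shortcut is only in the last step.

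The serious problem is that the intermediate steps of your outline do not hold up. Your opening claim---that the ``basic physical energy'' yields $\sup_t\int\rho|u|^2+c_v\rho(\theta-\tilde\theta)\le C$ and $\int_0^{T^*}\!\int|\nabla u|^2\le C$---is not a consequence of the energy balance here. The standard identity only gives conservation of $\int(\tfrac12\rho|u|^2+c_v\rho\theta)$; the viscous dissipation $\int2\mu|\mathcal D(u)|^2+\lambda(\div u)^2$ in the momentum energy is exactly cancelled by the same term appearing as a source in the temperature equation, so $\int_0^T\!\int|\nabla u|^2$ is not controlled for free. Moreover, for the Cauchy problem with $\tilde\theta>0$ the quantity $\int c_v\rho\theta$ is not even finite, and $\rho(\theta-\tilde\theta)$ has no sign. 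The paper's proof produces these bounds only as an \emph{output} of the coupled Gr\"onwall argument across Lemmas~\ref{lem-theta-L^2-blowup-u}--\ref{lem-u-1rd-estimate-u}, not as an a priori input. Similarly, your claim that the Lam\'e estimate with right-hand side $\rho\dot u+\nabla P\in L^2$ gives $\nabla u\in L^2(0,T^*;L^{q_0})$ for some $q_0>6$ is incorrect: $\rho\dot u\in L^2(0,T;L^2)$ only gives $\nabla^2 u\in L^2L^2$, hence $\nabla u\in L^2L^6$, not better.

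The deepest missing idea is how to control the viscous dissipation in the temperature balance. Your proposed $L^p$-energy argument for $\theta$ must absorb $\int(2\mu|\mathcal D(u)|^2+\lambda(\div u)^2)|\theta-\tilde\theta|^{p-1}$, which needs much more than $\nabla u\in L^\infty L^2\cap L^2L^6$. The paper's key observation in \eqref{theta-L^2-estimate-blowup-u1} is to substitute the momentum equation into the dissipation, rewriting
\begin{equation*}
\int\bigl(2\mu|\mathcal D(u)|^2+\lambda(\div u)^2-P\div u\bigr)(\theta-\tilde\theta)
=-\int\rho\dot u\cdot u\,(\theta-\tilde\theta)-\int2\mu\mathcal D(u):\nabla\theta\otimes u-\cdots,
\end{equation*}
so the dangerous quadratic in $\nabla u$ disappears and is replaced by terms like $\rho\dot u\cdot u(\theta-\tilde\theta)$ and $u\cdot\nabla\theta\cdot\nabla u$ that the weak Serrin norm of $u$ can actually control (via Lorentz--H\"older and the effective-viscous-flux estimates \eqref{GW-estimate1}--\eqref{GW-estimate2}, \eqref{h-g-estimate-Dirichlet}). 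Without this trick, and without running Lemmas~\ref{lem-theta-L^2-blowup-u}, \ref{lem-u-rho-L^2-estimate-u}, \ref{lem-u-1rd-estimate-u} together in one Gr\"onwall closure (because $\nabla P=R\nabla(\rho\theta)$ couples the two equations in both directions), the chain of reductions you describe does not close. So the proposal as written has a genuine gap at the first substantive step.
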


\begin{rem}
  Compared to Huang-Li-Wang  \cite{Huang-Li-Wang2013} and \cite{Huang-Li2013}, the main novelty can be outlined as follows.
  First, the Serrin norm of the velocity is relaxed to weak Serrin norm.
  Second, we succeed in removing  the technical restriction $\tilde{\theta}=0$, which plays a crucial role in controlling  $\displaystyle\int\rho|\theta|^2$  in \cite{Huang-Li-Wang2013}
  and \cite{Huang-Li2013}.
  Third, the Navier-slip boundary condition is also considered in our case.
\end{rem}

Finally, as a by-product, we will give a blowup criterion on the isentropic compressible Navier-Stokes equations,
\begin{equation}\label{CNS-eq}
  \begin{cases}
    \pa_t\rho+\div(\rho u)=0, \\
    \pa_t(\rho u)+\div(\rho u\otimes u)-\mu\Delta u-(\mu+\lambda)\nabla\div u+\nabla P(\rho)=0,\\
  \end{cases}
\end{equation}
by removing the crucial assumption $\rho_0\in L^1$ of Wang's work \cite{Wang2020}. Here the pressure $P=a\rho^{\gamma}$ with $\gamma>1$ and $a>0$.

\begin{thm}\label{thm-blowup-CNS}
 Let $(\rho,u)$ be a strong solution to the Cauchy problem \eqref{CNS-eq}\eqref{initial-data} with \eqref{Cauchy-condition} satisfying \eqref{strong-sol-defi} with $\theta=\tilde{\theta}=0$. If $T^*<\infty$ is the maximal time of existence, then
 \begin{equation}\label{blowup-goal3}
  \limsup_{T\rightarrow T^{*}}(\norm{\rho}_{L^{\infty}(0,T;L^{\alpha})}+\norm{u}_{L^s(0,T;L^r_w)})=\infty,
\end{equation}
for any $\alpha$, $s$ and $r$ satisfying $\alpha\geq\alpha_0$ (sufficiently large) and
\begin{equation}\label{index-blowup3}
  \begin{cases}
    \frac{2}{s}+\frac{3}{r}\leq 1, & \textrm{if $\alpha=\infty$ and $3<r\leq\infty$},  \\
    \frac{2}{s}+\frac{3}{r}<1, & \textrm{if $\alpha<\infty$ and $3<r\leq\infty$}.
  \end{cases}
\end{equation}
In particular, if $\alpha<\infty$, it should hold that $\tilde{\rho}=0$.
\end{thm}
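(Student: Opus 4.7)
The plan is a contradiction argument in the spirit of \cite{Wang2020, Huang-Li-Wang2013}. Assuming $T^{*}<\infty$ is the maximal time of existence and
\begin{equation*}
\norm{\rho}_{L^{\infty}(0,T^{*};L^{\alpha})}+\norm{u}_{L^{s}(0,T^{*};L^{r}_{w})}\leq M_{0}<\infty,
\end{equation*}
my goal is to derive uniform a priori bounds on $(\rho,u)$ in the class \eqref{strong-sol-defi} up to $T^{*}$; combined with Remark \ref{rem-local-existence}, this will extend the strong solution past $T^{*}$ and contradict maximality. Since the isentropic system is a strict sub-case of what is required for Theorems \ref{thm-blowup-FCNS} and \ref{thm-blowup-FCNS-velocity}, much of the higher-order machinery can be reused; the essential new point is to reorganize the low-order estimates so that the hypothesis $\rho_{0}\in L^{1}$ employed in \cite{Wang2020} is never invoked.

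The first step is a perturbative basic energy estimate based on the potential $G(\rho)$ determined by $\rho G''(\rho)=P'(\rho)$ and normalized by $G(\tilde\rho)=G'(\tilde\rho)=0$; this controls $\sup_{t}(\norm{\sqrt{\rho}\,u}_{L^{2}}^{2}+\norm{G(\rho)}_{L^{1}})+\norm{\nabla u}_{L^{2}(0,T^{*};L^{2})}^{2}$ purely by the initial energy, with no appeal to $\norm{\rho}_{L^{1}}$. Next, I would test the momentum equation against the material derivative $\dot u=u_{t}+u\cdot\nabla u$ to obtain
\begin{equation*}
\frac{d}{dt}\int\bigl(\mu|\nabla u|^{2}+(\mu+\lambda)(\div u)^{2}\bigr)+\int\rho|\dot u|^{2}=\mathcal{R}(t),
\end{equation*}
where $\mathcal{R}$ collects pressure and convective contributions. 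The key technical move is to estimate $\mathcal{R}$ using only $u\in L^{s}(0,T^{*};L^{r}_{w})$: via the Lorentz-space H\"older inequality $\norm{fg}_{L^{1}}\leq C\norm{f}_{L^{r',1}}\norm{g}_{L^{r}_{w}}$, every occurrence of $u$ can be paired against a factor built from $\nabla u$ or $\sqrt{\rho}\,\dot u$ that lies in the Lorentz predual by Sobolev interpolation from Step 1, and the index relation \eqref{index-blowup3} is precisely what is required to absorb $\int\rho|\dot u|^{2}$ on the left-hand side and close the time integration by Gr\"onwall.

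Once $\norm{\nabla u}_{L^{\infty}(0,T^{*};L^{2})}+\norm{\sqrt{\rho}\,\dot u}_{L^{2}(0,T^{*};L^{2})}\leq C$ is in hand, I would introduce the effective viscous flux $F=(2\mu+\lambda)\div u-(P-P(\tilde\rho))$, derive the standard $W^{1,p}$ bounds on $F$, and apply a Zlotnik-type argument as in \cite{Huang-Li-Wang2013} to the transport equation for $\log\rho$ in order to upgrade the hypothesis $\rho\in L^{\infty}_{t}L^{\alpha}$ to $\rho\in L^{\infty}_{t,x}$; for $\alpha<\infty$ the strict inequality in \eqref{index-blowup3} furnishes exactly the slack needed for this upgrade. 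All remaining higher-order bounds on $\nabla\rho$, $u_{t}$, and $\nabla^{2}u$ then propagate up to $T^{*}$ by Gr\"onwall-type arguments parallel to the isentropic literature, contradicting maximality. The main obstacle I expect is the closure of Step 2 at the critical endpoint $\frac{2}{s}+\frac{3}{r}=1$ with $\alpha=\infty$: one must absorb the $\sqrt{\rho}\,\dot u$ contributions while relying only on the weak-$L^{r}$ control of $u$ and the energy bound from Step 1, and it is precisely at this juncture that Lorentz-space duality must take over the role played by $\norm{\rho}_{L^{1}}$ in \cite{Wang2020}.
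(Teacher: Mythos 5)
Your Step 1 reintroduces exactly the hypothesis the theorem is designed to avoid, and misses the paper's central observation. The paper's proof of Theorem \ref{thm-blowup-CNS} deliberately skips the basic energy identity $\frac{d}{dt}\int(\tfrac12\rho|u|^2+G(\rho))+\int(\mu|\nabla u|^2+(\mu+\lambda)|\div u|^2)=0$ because, with $\tilde\rho=0$, its right-hand side requires $G(\rho_0)\sim\rho_0^{\gamma}\in L^1$; under the paper's assumptions $\rho_0\in H^1\cap L^{\alpha}$ only guarantees $\rho_0\in L^p$ for $p\ge 2$, so the initial potential energy can be infinite whenever $1<\gamma<2$. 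This is precisely the removal of $\rho_0\in L^1$ (and its relatives) that the remark after Theorem \ref{thm-blowup-CNS} advertises, and that you advertise too — but your first step contradicts it.

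What the paper actually does in Lemma \ref{lem-u-1rd-CNS-u} is couple the continuity equation's $L^2$-estimate for $\rho$ to the $u_t$-tested momentum estimate: it derives $\frac{d}{dt}\int\rho^2\le\int|\div u|^2+C(\int\rho^2+1)$ (using only $\rho\in L^\alpha$ and the interpolation \eqref{rho-L^p-CNS}) and then runs Gronwall simultaneously on $\int\rho^2+\int|\nabla u|^2$, with the $\int|\div u|^2$ from the former absorbed by the latter. This replaces the energy inequality and furnishes the crucial $\sup_t\|\rho\|_{L^2}$ that you would need anyway for the $L^2$-elliptic estimates on $G$ and $P$; your proposal never explains how you control $\|\rho\|_{L^2}$ or $\|P\|_{L^2}$ without it. Your Step 3 also diverges: you invoke a Zlotnik-type comparison for $\log\rho$, whereas the paper proves $\rho\in L^\infty_{t,x}$ directly (Lemma \ref{lem-rho-bound}) by testing the continuity equation against $p\rho^{p-1}$ and bounding $\|G\|_{L^\infty}$ via Gagliardo–Nirenberg, $\|\nabla G\|_{L^{1/(\frac16+\frac1\alpha)}}\lesssim\|\rho\|_{L^\alpha}\|\dot u\|_{L^6}$, then sending $p\to\infty$. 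The Zlotnik route would also likely work here, but it is a heavier tool than needed. In short: your use of Lorentz duality for the pressure/convective remainders is on the right track and mirrors the paper's \eqref{u-1rd-CNS-u2-1}–\eqref{u-1rd-CNS-u3-2}, but the scaffolding around it needs to be replaced by the paper's coupled $\rho$–$\nabla u$ Gronwall argument to keep the claim that no $L^1$-type integrability of $\rho_0$ is required.
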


\begin{rem}
  In fact, we can  replace $u$ by $\rho^{\min\{\gamma-1,\frac{1}{2}\}}u$ in \eqref{blowup-goal3} to get the following blowup criterion
  \begin{equation}\label{blowup-goal-rem}
  \limsup_{T\rightarrow T^{*}}(\norm{\rho}_{L^{\infty}(0,T;L^{\alpha})}+\norm{\rho^{\min\{\gamma-1,\frac{1}{2}\}}u}_{L^s(0,T;L^r_w)})=\infty.
\end{equation}
This result is slightly different from Wang's work in \cite{Wang2020}. However, we succeed in remove the technical conditions $\rho_0\in L^1$ and $\rho_0|u_0|^2\in L^1$. We also point out that the index condition \eqref{index-blowup3} is a little different from \eqref{index-blowup} for the case $r=\infty$, $\alpha<\infty$. This difference comes from the part of $L^s(0,T;L^r_w)$-norm.
\end{rem}

Now, let us make some comments on the analysis of this paper. For the proof of Theorem \ref{thm-blowup-FCNS}, we first
derive the good estimate on $L_t^{\infty}L_x^2$-norm of $\nabla u$. Compared to \cite{Feireisl2024}, we need to develop some new thoughts to
deal with the difficulty arising from the weak Serrin norm.
To overcome this difficulty, we will make full use of good properties of Lorentz spaces, and interpolation tricks to obtain the desired  estimate on $\norm{\nabla u}_{L_t^{\infty}L_x^2}$ under the conditions of Theorem \ref{thm-blowup-FCNS}. The main observation in this process is that the temperature equation $\eqref{FCNS-eq}_3$ can provide a good control on the diffusion term $\displaystyle\int|u|^2|\div u|^2$. In addition, compared to \cite{Feireisl2024}, we also need to tackle with the trouble boundary terms coming from the Navier-slip boundary case carefully.
 With the key estimate of $\norm{\nabla u}_{L_t^{\infty}L_x^2}$ in hand, we can adjust the methods of \cite{Feireisl2024}
 to prove \eqref{blowup-goal1}, and thus complete the proof of Theorem \ref{thm-blowup-FCNS}. For the proof of Theorem \ref{thm-blowup-FCNS-velocity}, due to the absence of $\tilde{\theta}=0$, we cannot follow the methods of \cite{Huang-Li-Wang2013} and \cite{Huang-Li2013} directly. Indeed, the technical restriction $\tilde{\theta}=0$ plays a key role in controlling  $\displaystyle\int\rho|\theta|^2$  in \cite{Huang-Li-Wang2013}
  and \cite{Huang-Li2013}.
  However, since $\tilde{\theta}$ may be postive in our case,
  we cannot get $\theta-\tilde{\theta}\geq0$ from the parabolic theory of temperature equation $\eqref{FCNS-eq}_3$ and then derive a good bound on the temperature as in \cite{Huang-Li-Wang2013} and \cite{Huang-Li2013}.
   To overcome this difficulty, we will take full advantage of the temperature equation. To see this, by multiplying the equation by $\theta-\tilde{\theta}$ and applying some new estimates, we succeed in getting the key bound on $\displaystyle\int\rho|\theta-\tilde{\theta}|^2$ (see \eqref{theta-L^2-estimate-blowup-u1} for details). Then, combining this key bound with the estimate on $\displaystyle\int|\nabla u|^2$, we finally obtain the bound of $\norm{\nabla u}_{L_t^{\infty}L_x^2}$.
   Once the bound of $\norm{\nabla u}_{L_t^{\infty}L_x^2}$ is obtained, we can employ similar methods of \cite{Huang-Li-Wang2013} and interpolation tricks in Lorentz spaces
  to prove Theorem \ref{thm-blowup-FCNS-velocity}. For the proof of Theorem \ref{thm-blowup-CNS} concerning the isentropic compressible Navier-Stokes system, the key point lies in how to remove the technical assumption $\rho_0\in L^1$ in \cite{Wang2020}. The methods in \cite{Wang2020} are not applicable at the first step of estimating $\norm{\nabla u}_{L_t^{\infty}L_x^2}$.
   To overcome this difficulty, the main observation here is that by combining the estimate of $\displaystyle\int|\nabla u|^2$ with the estimate $\displaystyle\int\rho^2$ in a clever way, we can succeed in obtaining the key bound of $\norm{\nabla u}_{L_t^{\infty}L_x^2}$ by removing the technical assumption $\rho_0\in L^1$. We refer to the proof of \eqref{u-1rd-CNS-u} for details.

The rest of the paper is organized as follows. In Section 2, we introduce some elementary lemmas which will be needed later. In Sections 3-5, we will prove Theorem \ref{thm-blowup-FCNS}, \ref{thm-blowup-FCNS-velocity} and \ref{thm-blowup-CNS}, respectively.

\section{Preliminary}
This section mainly introduces some elementary lemmas used later. First, we notice that
\begin{equation*}
  \Delta u=\nabla\div u-\nabla\times\curl u,
\end{equation*}
which together with the singular integral and  standard elliptic estimates implies that
\begin{lem}
    (1) Let $\Omega=\mathbb{R}^3$ and any $p\in(1,\infty)$. Then it holds that for any integer $k\geq 0$,
    \begin{equation*}
    \norm{u}_{W^{k+1,p}}\lesssim\norm{\div u}_{W^{k,p}}+\norm{\curl u}_{W^{k,p}}.
  \end{equation*}
  In particular,
  \begin{equation}\label{Hodge-decomposition1}
      \norm{\nabla u}_{L^p}\lesssim\norm{\div u}_{L^p}+\norm{\curl u}_{L^p}.
  \end{equation}
  Here and after $a\lesssim b$ means that $a\leq Cb$ for some generic constant $C>0$.

    (2) Let $k\geq 0$ be an integer and $p\in(1,\infty)$, and assume $\Omega$ is a simply connected bounded domain in $\mathbb{R}^3$ with $C^{k+1,1}$ boundary $\pa\Omega$. If $u\cdot n=0$ or $u\times n=0$ on $\pa\Omega$, there exists a positive constant $C(q,k,\Omega)$ such that
  \begin{equation*}
    \norm{u}_{W^{k+1,p}}\leq C(\norm{\div u}_{W^{k,p}}+\norm{\curl u}_{W^{k,p}}).
  \end{equation*}
  In particular, for $k=0$, we have
  \begin{equation}\label{Hodge-decomposition2}
    \norm{\nabla u}_{L^p}\leq C(\norm{\div u}_{L^p}+\norm{\curl u}_{L^p}).
  \end{equation}
\end{lem}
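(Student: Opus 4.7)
The plan is to derive both estimates from the identity $\Delta u = \nabla\div u - \nabla\times\curl u$ already recorded in the excerpt, combined with Calder\'on--Zygmund theory in the whole-space case and elliptic boundary regularity in the bounded-domain case.

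For part (1), I would first handle $k=0$ by a Fourier/singular-integral argument. Writing $f=\div u$ and $g=\curl u$, the identity gives $-|\xi|^2\hat u_i = i\xi_i \hat f - i(\xi\times\hat g)_i$, hence
\[
\widehat{\pa_j u_i}(\xi) \;=\; \frac{\xi_j\xi_i}{|\xi|^2}\hat f(\xi) \;-\; \frac{i\xi_j}{|\xi|^2}\bigl(\xi\times\hat g\bigr)_i(\xi).
\]
The multipliers $\xi_j\xi_i/|\xi|^2$ and $\xi_j\xi_k/|\xi|^2$ are homogeneous of degree zero and smooth off the origin, so by Mikhlin--H\"ormander (equivalently, Calder\'on--Zygmund theory for second Riesz transforms) they are bounded on $L^p$ for every $1<p<\infty$, yielding \eqref{Hodge-decomposition1}. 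For $k\geq 1$ I would differentiate the identity $|\alpha|=k$ times and apply the same multiplier bound componentwise.

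For part (2), set $F=-\nabla\div u+\nabla\times\curl u$, so that $-\Delta u=F$ in $\Omega$ with one of the boundary conditions $u\cdot n=0$ or $u\times n=0$ on $\pa\Omega$. In either case the pair (system plus boundary data) is elliptic in the Agmon--Douglis--Nirenberg sense, so the standard $W^{k+1,p}$ elliptic regularity (for the Laplacian with a first-order covariant boundary condition) yields
\[
\|u\|_{W^{k+1,p}}\leq C\bigl(\|F\|_{W^{k-1,p}}+\|u\|_{L^p}\bigr)\leq C\bigl(\|\div u\|_{W^{k,p}}+\|\curl u\|_{W^{k,p}}+\|u\|_{L^p}\bigr).
\]
To absorb the lower-order term $\|u\|_{L^p}$, I would argue by compactness--contradiction: assume a sequence $u_n$ with $\|u_n\|_{L^p}=1$ and $\div u_n,\curl u_n\to 0$ in the relevant norms. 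The elliptic estimate then bounds $\|u_n\|_{W^{1,p}}$, so after extracting a subsequence $u_n\to u_\infty$ in $L^p$ by Rellich, with $\div u_\infty=\curl u_\infty=0$ and the same boundary condition inherited in the limit. Because $\Omega$ is simply connected, such a harmonic vector field must be identically zero, contradicting $\|u_\infty\|_{L^p}=1$.

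I expect the genuine obstacle to be the last step in part (2): identifying and ruling out the kernel of the div--curl operator under each boundary condition. The whole-space part is a routine Mikhlin-multiplier computation, and the bounded-domain elliptic estimate is off the shelf once the system is put in ADN form; what requires input from the geometry of $\Omega$ is the uniqueness statement that makes simple connectedness enter. In a full write-up I would either invoke a Hodge-type decomposition theorem (triviality of the first de Rham cohomology when $u\times n=0$, and of the relative second cohomology when $u\cdot n=0$), or argue directly via a scalar potential in the first case and a vector potential in the second, reducing the claim to uniqueness for a scalar Poisson problem on a simply connected domain.
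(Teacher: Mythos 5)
Your proposal is correct in spirit and, for part (1), matches the paper exactly: the paper simply says this is ``a consequence of the singular integral,'' which is precisely your Mikhlin--H\"ormander multiplier computation. (Minor typo: since $\hat u_i = -\frac{i\xi_i}{|\xi|^2}\hat f + \frac{i}{|\xi|^2}(\xi\times\hat g)_i$, one has $\widehat{\pa_j u_i} = \frac{\xi_j\xi_i}{|\xi|^2}\hat f - \frac{\xi_j}{|\xi|^2}(\xi\times\hat g)_i$, without the extra factor of $i$ in the second multiplier; this doesn't affect boundedness.) For part (2) the paper gives no argument at all --- it just cites Aramaki (2014) and von Wahl (1992) --- so your ADN-plus-compactness argument is a genuine filling-in of what the references prove, and your observation that the real content is identifying the kernel of the div--curl operator under each boundary condition is exactly right.

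One substantive point worth correcting, since you flag it yourself as the ``genuine obstacle'': you have the cohomological dichotomy backwards. With $\div u=\curl u=0$, the harmonic fields satisfying the \emph{tangential} condition $u\cdot n=0$ form a space isomorphic to $H^1(\Omega;\mathbb{R})$, so \emph{simple connectedness} kills the kernel in that case. The harmonic fields satisfying the \emph{normal} condition $u\times n=0$ are classified by $H^2(\Omega;\mathbb{R})$ (equivalently, by $H^1(\Omega,\pa\Omega;\mathbb{R})\cong H_2(\Omega)$), which vanishes iff $\pa\Omega$ is connected, not iff $\Omega$ is simply connected. Your own potential-based sketch confirms this: writing $u=\nabla\phi$ (possible since $\Omega$ is simply connected and $\curl u=0$), the condition $u\cdot n=0$ gives a Neumann problem for $\Delta\phi=0$ forcing $\phi$ constant, while $u\times n=0$ only forces $\phi$ constant on each component of $\pa\Omega$, so one needs a connected boundary to conclude. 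A spherical shell is simply connected but supports a nontrivial harmonic field with $u\times n=0$. In the paper's applications (Navier-slip case, where \eqref{Hodge-decomposition2} is applied to $\omega=\curl u$ with $\omega\times n=0$) the domain is implicitly of ball type, so this does not cause trouble there, but it is the precise hypothesis under which your contradiction step closes.
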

\begin{proof}The first part is a consequence of the singular integral, while the second part comes from \cite{Aramaki2014} and \cite{von-Wahl1992}.
\end{proof}
Next, we introduce some notations as follows:
\begin{equation*}
  G:=(2\mu+\lambda)\div u-P+P(\tilde{\rho},\tilde{\theta}), \quad \omega:=\nabla\times u, \quad \dot{f}:=f_t+u\cdot\nabla f,
\end{equation*}
which represent the effective viscous flux, vorticity and material derivative of $f$, respectively.
From $\eqref{FCNS-eq}_2$, it is easy to check that
\begin{equation}\label{GW-equation}
  \Delta G=\div(\rho\dot{u}),\quad \mu\Delta\omega=\nabla\times(\rho\dot{u}).
\end{equation}
Then, the singular integral and classical elliptic estimates give the following lemma.

\begin{lem}
  (1) Let $\Omega=\mathbb{R}^3$ and any $p\in(1,\infty)$. Then it holds that
  \begin{equation}\label{GW-estimate1}
    \norm{\nabla G}_{L^p}+\norm{\nabla\omega}_{L^p}\lesssim\norm{\rho\dot{u}}_{L^p}.
  \end{equation}
  (2) Let $\Omega$ be a simply connected bounded smooth domain with boundary condition \eqref{Navier-slip-condition} and $p\in(1,\infty)$. Then it holds that
  \begin{equation}\label{GW-estimate2}
    \norm{\nabla G}_{L^p}+\norm{\nabla\omega}_{L^p}\lesssim\norm{\rho\dot{u}}_{L^p}.
  \end{equation}
\end{lem}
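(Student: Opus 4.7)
The plan is to read \eqref{GW-equation} as two Poisson-type equations and apply $L^p$ elliptic theory, adapted to the geometry of $\Omega$.

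For part (1), since $\Omega=\mathbb{R}^3$, I would represent $\nabla G$ and $\nabla\omega$ as Calderon--Zygmund operators acting on $\rho\dot u$. From $\Delta G=\div(\rho\dot u)$ one gets $\nabla G=-\nabla(-\Delta)^{-1}\div(\rho\dot u)$, whose Fourier multiplier $\xi\otimes\xi/|\xi|^2$ is a composition of Riesz transforms, hence bounded on $L^p(\mathbb{R}^3)$ for every $p\in(1,\infty)$. Applied in the same way to $\mu\Delta\omega=\nabla\times(\rho\dot u)$ with the analogous Riesz-type kernel, this yields \eqref{GW-estimate1} directly.

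For part (2), I would handle $\omega$ and $G$ separately, matching the Navier-slip data to standard boundary elliptic theory. For $\omega$: the condition $\curl u\times n=0$ gives $\omega\times n|_{\partial\Omega}=0$, while $\div\omega=0$ holds identically, so $\omega$ solves the Hodge--Laplace problem $\mu\Delta\omega=\nabla\times(\rho\dot u)$ with vanishing tangential trace. Combining \eqref{Hodge-decomposition2} with the $W^{2,p}$ regularity for this boundary problem (available from \cite{Aramaki2014,von-Wahl1992}) then yields $\norm{\nabla\omega}_{L^p}\lesssim\norm{\rho\dot u}_{L^p}$.

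For $G$: the momentum equation rewrites as $\nabla G=\mu\nabla\times\omega+\rho\dot u$. Taking the normal component on $\partial\Omega$, using that $\omega\times n=0$ forces $(\nabla\times\omega)\cdot n|_{\partial\Omega}$ to be controlled modulo boundary-curvature commutators in $\omega$, and that $u\cdot n\equiv 0$ on $\partial\Omega\times[0,T]$ converts $(\rho\dot u)\cdot n|_{\partial\Omega}$ into a curvature expression quadratic in the tangential trace of $u$, one derives a Neumann-type condition for $G$ whose data is already controlled by $\rho\dot u$ after invoking the $\omega$-estimate above. Standard $L^p$ Neumann theory applied to $\Delta G=\div(\rho\dot u)$ then yields $\norm{\nabla G}_{L^p}\lesssim\norm{\rho\dot u}_{L^p}$. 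The main obstacle in the whole argument is precisely this boundary identification for $G$: once the Navier-slip structure is translated into an admissible boundary condition for the effective viscous flux, the rest is routine Calderon--Zygmund or Agmon--Douglis--Nirenberg machinery.
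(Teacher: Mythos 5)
Your part (1) is essentially the paper's proof: $\nabla G$ and $\nabla\omega$ are Riesz-type singular integrals of $\rho\dot u$, bounded on $L^p(\mathbb{R}^3)$.

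For part (2) there is a genuine gap, and it is one of ordering. You propose to estimate $\omega$ first and $G$ second, but the $\omega$-estimate as you describe it does not close. Applying $W^{2,p}$ regularity to $\mu\Delta\omega=\nabla\times(\rho\dot u)$ with $\omega\times n|_{\partial\Omega}=0$ would produce $\norm{\omega}_{W^{2,p}}\lesssim\norm{\nabla\times(\rho\dot u)}_{L^p}$, which costs one derivative too many; the bound $\norm{\nabla\omega}_{L^p}\lesssim\norm{\rho\dot u}_{L^p}$ requires the ``half-gain'' coming from the divergence/curl structure, which on a bounded domain does not follow from the source term alone. The Hodge estimate \eqref{Hodge-decomposition2} gives $\norm{\nabla\omega}_{L^p}\lesssim\norm{\curl\omega}_{L^p}$ (using $\div\omega=0$ and $\omega\times n=0$), but you still need to bound $\norm{\curl\omega}_{L^p}$ by $\norm{\rho\dot u}_{L^p}$. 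The only identity available for that is the momentum equation rewritten as $\mu\nabla\times\omega=\nabla G-\rho\dot u$, which requires having $\nabla G$ already. So $G$ must come first, not second.

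The paper's resolution is exactly this, and it avoids the curvature bookkeeping you anticipate. From $\omega\times n=0$ on $\partial\Omega$ one has the \emph{exact} identity $(\nabla\times\omega)\cdot n=0$ on $\partial\Omega$ (cited from Bendali--Dom\'inguez--Gallic \cite{Bendali1985}); it is not merely ``controlled modulo curvature commutators''. Taking the normal trace of $\nabla G=\mu\nabla\times\omega+\rho\dot u$ then gives the clean Neumann condition $\nabla G\cdot n=\rho\dot u\cdot n$, which is automatically compatible with $\Delta G=\div(\rho\dot u)$ in the weak formulation $\int\nabla G\cdot\nabla\phi=\int\rho\dot u\cdot\nabla\phi$. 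Standard $L^p$ Neumann theory yields $\norm{\nabla G}_{W^{k,p}}\lesssim\norm{\rho\dot u}_{W^{k,p}}$ directly, with no further manipulation of $(\rho\dot u)\cdot n$. Then $\norm{\nabla\omega}_{L^p}\lesssim\norm{\curl\omega}_{L^p}=\mu^{-1}\norm{\nabla G-\rho\dot u}_{L^p}\lesssim\norm{\rho\dot u}_{L^p}$ via \eqref{Hodge-decomposition2}. You should reverse your ordering, invoke the Bendali et al.\ identity instead of estimating the normal trace of $\nabla\times\omega$ by hand, and drop the reformulation of $(\rho\dot u)\cdot n$ in terms of curvature and tangential traces of $u$, which is not needed.
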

\begin{proof}
  The first part is a direct consequence of the singular integral. Here, we only prove the second part. Since $G$ satisfies
  \begin{equation*}
    \begin{cases}
      \Delta G=\div(\rho\dot{u}),\\
      \nabla G\cdot n=\rho\dot{u}\cdot n \quad\mbox{on }\pa\Omega,
    \end{cases}
  \end{equation*}
  where $\nabla\times\omega\cdot n=0$ on $\pa\Omega$ from \cite{Bendali1985} with the unit outer normal $n$ on $\pa\Omega$, one gets that for any integer $k\geq 0$ and $p\in(1,\infty)$,
  \begin{equation*}
    \norm{\nabla G}_{W^{k,p}}\lesssim\norm{\rho\dot{u}}_{W^{k,p}}.
  \end{equation*}
  Due to $\omega\times n=0$ on $\pa\Omega$ and $\div\omega=0$, then  we have from \eqref{Hodge-decomposition2} that
  \begin{equation*}
    \norm{\nabla\omega}_{L^p}\lesssim\norm{\nabla\times\omega}_{L^p}
    \lesssim\norm{\nabla G-\rho\dot{u}}_{L^p}\lesssim\norm{\rho\dot{u}}_{L^p}.
  \end{equation*}
\end{proof}

Next, we introduce the Lorentz spaces $L^{p,q}$ with $1\leq p<\infty, 1\leq q\leq \infty$ and its norm $\norm{\cdot}_{L^{p,q}}$. We say that $f\in L^{p,q}$ if $f\in L_{loc}^1$ and
\begin{equation*}
  \norm{f}_{L^{p,q}}=\begin{cases}
                       \left(\displaystyle\int_0^{\infty}pt^q|\{x\in\Omega: |f(x)|>t\}|^{q/p}\frac{dt}{t}\right)^{1/q},\,q<\infty \\
                       \displaystyle\sup_{t>0}t|\{x\in\Omega: |f(x)|>t\}|^{1/p},\,q=\infty,
                     \end{cases}
\end{equation*}
is bounded. The following lemma concerns some properties of Lorentz spaces.
\begin{lem}
  (1) Let $1\leq p_1,p_2<\infty$, $1\leq q_1, q_2\leq \infty$, $\frac{1}{p}=\frac{1}{p_1}+\frac{1}{p_2}<1$, and $q=\min\{q_1,q_2\}$. Then the following H\"{o}lder inequality holds (see \cite{O'Neil1963}):
  \begin{equation}\label{Lorentz-Holder}
    \norm{fg}_{L^{p,q}}\lesssim\norm{f}_{L^{p_1,q_1}}\norm{g}_{L^{p_2,q_2}.}
  \end{equation}
  Indeed, here $q$ satisfies $\frac{1}{q}=\frac{1}{q_1}+\frac{1}{q_2}$. With the increasing of $L^{p,q}$ as $q$ increases, we can replace $q$ by $\min\{q_1,q_2\}$. \\
  (2) Let $1<p_1,p_2<\infty$, $1\leq q_1, q_2\leq \infty$, $0<\theta<1$. Then the interpolation characteristic of Lorentz spaces holds for $p_1\neq p_2$ (see \cite{Chamorro2013}):
  \begin{equation}\label{Lorentz-interpolation}
    (L^{p_1,q_1},L^{p_2,q_2})_{\theta,q}=L^{p,q}
  \end{equation}
  with $\frac{1}{p}=\frac{1-\theta}{p_1}+\frac{\theta}{p_2}$. When $p_1=p_2$, this interpolation also holds with $\frac{1}{q}=\frac{1-\theta}{q_1}+\frac{\theta}{q_2}$.\\
  (3) The Lorentz spaces $L^{p,q}$ increase as the exponent $q$ increases (see \cite{Grafakos2014}):
  \begin{equation}\label{Lorentz-increase}
    \norm{f}_{L^{p,q_2}}\leq (\frac{q_1}{p})^{\frac{1}{q_1}-\frac{1}{q_2}}\norm{f}_{L^{p,q_1}}
  \end{equation}
  with $1\leq p<\infty$ and $1\leq q_1<q_2\leq\infty$. In particular, $L^p\subseteq L^{p,\infty}=L^p_w$.
\end{lem}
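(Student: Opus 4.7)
All three parts of this lemma are classical facts about Lorentz spaces, and my plan is to follow the standard arguments in the cited references \cite{O'Neil1963, Chamorro2013, Grafakos2014} rather than reinvent them. The unifying device is the decreasing rearrangement $f^{*}$ together with the equivalent quasi-norm
\[
\|f\|_{L^{p,q}} \sim \Big(\int_{0}^{\infty} \bigl(t^{1/p} f^{*}(t)\bigr)^{q}\, \frac{dt}{t}\Big)^{1/q}, \qquad 1 \le q < \infty,
\]
which reduces each claim to a one-variable inequality on $(0,\infty)$ with respect to the Haar measure $dt/t$.

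For part (1), the plan is to combine O'Neil's rearrangement bound, which controls $(fg)^{*}(t)$ by a product of shifted rearrangements of $f$ and $g$, with the ordinary Hölder inequality in the measure $dt/t$ applied with exponents $q_{1}, q_{2}$ satisfying $1/q = 1/q_{1} + 1/q_{2}$. This yields the stated product estimate with this value of $q$; the weaker form with $q = \min\{q_{1}, q_{2}\}$ then drops out immediately from part (3). For part (2), the plan is first to invoke Peetre's $K$-functional computation for the couple $(L^{r_{0}}, L^{r_{1}})$, which gives the fundamental identification $L^{p,q} = (L^{r_{0}}, L^{r_{1}})_{\theta,q}$ whenever $1/p = (1-\theta)/r_{0} + \theta/r_{1}$, and then to apply the Lions-Peetre reiteration theorem to pass from a couple of $L^{r}$ spaces to a couple of Lorentz spaces with distinct first exponents. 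The case $p_{1} = p_{2}$ is separate: here the endpoint spaces share the scale of distribution functions, and the interpolation reduces to a direct Hölder estimate in the $dt/t$ measure that produces the additional exponent relation $1/q = (1-\theta)/q_{1} + \theta/q_{2}$.

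For part (3), the plan is most direct: starting from the layer-cake formula $\|f\|_{L^{p,q}}^{q} = p \int_{0}^{\infty} t^{q} |\{|f| > t\}|^{q/p}\, dt/t$, apply a Hardy-type inequality to the monotone function $t \mapsto t\, |\{|f| > t\}|^{1/p}$ to compare the $L^{q_{1}}(dt/t)$ and $L^{q_{2}}(dt/t)$ norms; optimizing yields the sharp constant $(q_{1}/p)^{1/q_{1} - 1/q_{2}}$. The endpoint inclusion $L^{p} \subseteq L^{p,\infty}$ then follows by taking $q_{1} = p$ and $q_{2} = \infty$.

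No step in this program presents a genuine obstacle, since each ingredient is a textbook result in the Lorentz-space literature. The only minor care required for later use in this paper is to track the degenerate cases $q_{1} = \infty$ or $q_{2} = \infty$ in parts (1) and (3), where the integral formulas must be replaced by their $\sup$ analogues; the same rearrangement arguments continue to work verbatim, so the lemma is stated and used in the generality needed in Sections 3--5.
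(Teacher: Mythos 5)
Your proposal is correct, but it takes a genuinely different route from the paper's own proof. You follow the classical rearrangement-function machinery: for part (1) you invoke O'Neil's bound on $(fg)^{*}$ followed by H\"older in the measure $dt/t$, for part (2) you appeal to Peetre's $K$-functional computation and the Lions--Peetre reiteration theorem, and for part (3) you estimate the $L^{q_2}(dt/t)$ norm of $t\mapsto t^{1/p}f^{*}(t)$ by its $L^{q_1}(dt/t)$ norm (the step you label a Hardy-type inequality is really just splitting off $\|f\|_{L^{p,\infty}}^{q_2-q_1}$ and using the monotonicity of $f^{*}$, not a Hardy inequality proper, though the constant $(q_1/p)^{1/q_1-1/q_2}$ comes out the same). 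The paper instead gives a self-contained, rearrangement-free argument: decomposing $f$ dyadically as $f_m = f\chi_{\{2^m\le|f|<2^{m+1}\}}$, it establishes the equivalence $\norm{f}_{L^{p,q}}\sim\norm{\norm{f_m}_{L^p(\Omega)}}_{\ell^q_m(\mathbb{Z})}$ via a short Young-inequality computation, and then reads off all three parts from the corresponding facts for mixed $\ell^q(L^p)$ norms. The paper's route is more elementary and makes the proofs essentially one-line reductions to the Lebesgue case, though it leaves the product-structure details of part (1) somewhat implicit; your route is the standard reference treatment and is more explicit about where each classical ingredient enters, at the cost of relying on heavier black boxes (O'Neil's lemma, reiteration).
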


\begin{proof}
  Here, we introduce an equivalent definition to understand the H\"{o}lder inequality readily. Let $f_m=f\chi_{\{x\in\Omega: 2^m\leq|f|<2^{m+1}\}}$ with $\chi_E$ being a characteristic function on some disjoint sets $E_m=\{x\in\Omega: 2^m\leq|f|<2^{m+1}\}$. Then
  \begin{equation*}
    \begin{aligned}
    \norm{f}_{L^{p,q}}^q&:=p\int_0^{\infty}t^q|\{x\in\Omega: |f(x)|>t\}|^{\frac{q}{p}}\frac{dt}{t}\\
    &\sim\sum_m\int_{2^m}^{2^{m+1}}(2^m\sum_{n\geq m}|E_n|^{\frac{1}{p}})^q\frac{dt}{t}\\
    &\sim\norm{2^m\sum_{n\geq m}|E_n|^{\frac{1}{p}}}_{l_m^{q}}^q.
    \end{aligned}
  \end{equation*}
  Thus
  \begin{equation*}
    \norm{2^m|E_m|^{\frac{1}{p}}}_{l_m^q}\lesssim\norm{f}_{L^{p,q}}\lesssim\norm{\sum_{k\geq 0}2^{-k}2^{m+k}|E_{m+k}|^{\frac{1}{p}}}_{l_m^{q}}\lesssim\norm{2^m|E_m|^{\frac{1}{p}}}_{l_m^q},
  \end{equation*}
  where we have used Young's inequality in the last inequality. Therefore, we have the following equivalence:
  \begin{equation}\label{Lorentz-equivalence}
    \norm{f}_{L^{p,q}}\sim\norm{\norm{f_m}_{L^p(\Omega)}}_{l_m^q(\mathbb{Z})}.
  \end{equation}
  Then, H\"{o}lder inequality and interpolation inequality just come from the cases in $L^q(\Omega)$ and $l^q(\mathbb{Z})$. Also the monotonicity of $L^{p,q}$ on $q$ follows the case of $l^q(\mathbb{Z})$.
\end{proof}

 The next lemma is introduced from \cite{Feireisl2004B} by Feireisl. Here, we give a direct proof which depends on Poincar\'{e} inequality instead of the weak-$L^p$ convergence.
\begin{lem}\label{lemma-Feireisl}
  (see \cite{Feireisl2004B}) Let $v\in W^{1,2}(\Omega)$, and $\rho$ be a non-negative function such that
  \begin{equation*}
    0<M\leq\int_{\Omega}\rho dx,\quad \int_{\Omega}\rho^{\gamma}\leq E_0,
  \end{equation*}
  where $\Omega\subset\mathbb{R}^N$ is a bounded domain for $N\geq 1$ and $\gamma>1$.
Then there exists a constant $c>0$ depending on $M,E_0,\Omega,\gamma$ such that
  \begin{equation*}
    \norm{v}_{L^2(\Omega)}^2\leq c(E_0,M,\Omega,\gamma)\left[\norm{\nabla v}_{L^2(\Omega)}^2+\left(\int_{\Omega}\rho|v|dx\right)^2\right].
  \end{equation*}
\end{lem}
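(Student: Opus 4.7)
The plan is to bypass the usual weak-$L^p$ compactness argument by a direct construction that uses only Chebyshev's inequality and Poincar\'e--Wirtinger. Setting $\delta := M/(2|\Omega|)$ and $A := \{x\in\Omega:\rho(x)\geq \delta\}$, the assumption $\int_\Omega\rho\,dx\geq M$ together with $\int_{\Omega\setminus A}\rho\,dx\leq \delta|\Omega|=M/2$ yields $\int_A\rho\,dx\geq M/2$. H\"older's inequality on $A$ then gives $M/2\leq\int_A\rho\,dx\leq E_0^{1/\gamma}|A|^{1-1/\gamma}$, so that $|A|\geq c_1(M,E_0,\gamma,|\Omega|)>0$. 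Thus $A$ is a set of definite size on which $\rho$ is uniformly bounded below.

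Next, I would introduce the mean $v_A:=\frac{1}{|A|}\int_A v\,dx$ and establish a Poincar\'e-type inequality centered at $v_A$. Writing $\bar v:=\frac{1}{|\Omega|}\int_\Omega v\,dx$, the standard Poincar\'e--Wirtinger inequality gives $\norm{v-\bar v}_{L^2(\Omega)}\leq C(\Omega)\norm{\nabla v}_{L^2(\Omega)}$, and Cauchy--Schwarz on $A$ yields $|\bar v-v_A|\leq |A|^{-1/2}\norm{v-\bar v}_{L^2(A)}$. Combining these via the triangle inequality produces
$$\norm{v-v_A}_{L^2(\Omega)}\leq C_2(|A|,\Omega)\,\norm{\nabla v}_{L^2(\Omega)}.$$

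For the final step, I would estimate $|v_A|$ using the lower bound on $\rho$ over $A$:
$$|v_A|\leq\frac{1}{|A|}\int_A|v|\,dx\leq\frac{1}{\delta|A|}\int_A\rho|v|\,dx\leq\frac{1}{\delta|A|}\int_\Omega\rho|v|\,dx.$$
Combining this with the trivial splitting $\norm{v}_{L^2}^2\leq 2\norm{v-v_A}_{L^2}^2+2|\Omega|v_A^2$ and the Poincar\'e-type bound above delivers
$$\norm{v}_{L^2}^2\leq c(M,E_0,\Omega,\gamma)\Bigl[\norm{\nabla v}_{L^2}^2+\Bigl(\int_\Omega\rho|v|\,dx\Bigr)^2\Bigr],$$
which is the desired inequality. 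There is no real obstacle; the key conceptual point is that centering the Poincar\'e inequality at the mean $v_A$ over the set $\{\rho\geq\delta\}$, rather than at the usual spatial mean, is precisely what couples $|v_A|$ to the weighted $L^1$ quantity $\int_\Omega\rho|v|\,dx$, allowing Poincar\'e--Wirtinger to replace the weak-$L^p$ convergence argument of Feireisl.
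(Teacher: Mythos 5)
Your argument is correct, and it is a genuinely different route from the paper's. The paper also avoids the weak-$L^p$ compactness of Feireisl's original argument, but it does so via a truncation device: it sets $v_k=v\mathbf{1}_{\{|v|\le k\}}$, uses Poincar\'e--Wirtinger on the whole domain to relate $v_k$ to its spatial mean $\bar v_k$, estimates $\bar v_k$ through the identity $\bar M\,\bar v_k=\int_\Omega\rho(v_k-\tilde v_k)\,dx$ combined with H\"older and the pointwise bound $|\tilde v_k|\le 2k$ (to make the $L^{\gamma'}$ pairing with $\rho\in L^\gamma$ admissible for every $\gamma>1$), controls the tail $\int_{\{|v|>k\}}|v|^2$ by Gagliardo--Nirenberg, and finally optimizes $k\sim\norm{v}_{L^2}$ and absorbs terms with Young's inequality. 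Your proof replaces all of this with a single structural observation: Chebyshev plus H\"older produce a set $A=\{\rho\ge\delta\}$ of measure $|A|\ge c_1(M,E_0,\gamma)>0$ on which $\rho$ is bounded below, and centering Poincar\'e at $v_A:=\fint_A v$ makes $|v_A|$ directly comparable to $\frac{1}{\delta|A|}\int_\Omega\rho|v|\,dx$, with no truncation, no Sobolev or Gagliardo--Nirenberg embedding, and no Young absorption. What the paper's truncation buys is a way to pair $\rho\in L^\gamma$ against $v$ in $L^{\gamma'}$ uniformly in $\gamma>1$ even when $W^{1,2}\not\hookrightarrow L^{\gamma'}$; your approach sidesteps the need for any such duality estimate by never leaving $L^1$ and $L^2$. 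Both yield the same dependence of the constant on $M,E_0,\Omega,\gamma$, but yours is shorter and structurally more transparent.
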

\begin{proof}
  Divide $\norm{v}_{L^2}^2$ into two parts as follows,
  \begin{equation*}
    \norm{v}_{L^2}^2=\int_{|v|\leq k}|v|^2dx+\int_{|v|>k}|v|^2dx.
  \end{equation*}
  Then, by denoting $v_k=v1_{|v|\leq k}$ with $1_{|v|\leq k}$ being the characteristic function on $\{|v|\leq k\}$, we have
  \begin{equation*}
    \norm{v_k}_{L^2(\Omega)}\leq \norm{\tilde{v}_k}_{L^2(\Omega)}+C(\Omega)|\bar{v}_k|\leq C(\Omega)(\norm{\nabla v_k}_{L^2(\Omega)}+|\bar{v}_k|)\leq C(\Omega)(\norm{\nabla v}_{L^2(\Omega)}+|\bar{v}_k|),
  \end{equation*}
  where $\bar{f}$ denotes the integral average of $f$ on $\Omega$ and $\tilde{f}=f-\bar{f}$.
  It is clear that
  \begin{equation*}
    \bar{M}\bar{v}_k=\int_{\Omega}\rho(v_k-\tilde{v}_k)dx,
  \end{equation*}
 where $\bar{M}:=\int_{\Omega}\rho dx\geq M$. Then, by H\"{o}lder inequality and Poincar\'{e} inequality, we have
  \begin{equation*}
  \begin{aligned}
    \left|\displaystyle\int_{\Omega}\rho\tilde{v}_kdx\right|&\leq\norm{\rho}_{L^{\gamma}}\norm{\tilde{v}_k}_{L^{\gamma'}}\leq C(\gamma,\Omega)\norm{\rho}_{L^{\gamma}}\norm{\tilde{v}_k}_{L^{2\gamma'}}\\
    &\leq C(\gamma,\Omega)\norm{\rho}_{L^{\gamma}}k^{1-\frac{1}{\gamma'}}\norm{\tilde{v}_k}_{L^2}^{\frac{1}{\gamma'}}\\
    &\leq C(\gamma,\Omega)\norm{\rho}_{L^{\gamma}}k^{1-\frac{1}{\gamma'}}\norm{\nabla v}_{L^2}^{\frac{1}{\gamma'}}
  \end{aligned}
  \end{equation*}
  with $\frac{1}{\gamma}+\frac{1}{\gamma'}=1$ and the fact that $|\tilde{v}_k|\leq 2k$.
  Thus, we have
  \begin{equation*}
    |\bar{v}_k|\leq C(M,E_0,\Omega,\gamma)\left(\int_{\Omega}\rho|v|dx+k^{1-\frac{1}{\gamma'}}\norm{\nabla v}_{L^2}^{\frac{1}{\gamma'}}\right).
  \end{equation*}
  On the other hand, by Gagliardo-Nirenberg interpolation inequality, we have
  \begin{equation*}
    \int_{|v|>k}|v|^2dx\leq\begin{cases}
                             k^{-2}\norm{v}_{L^4}^4\leq C(\Omega)k^{-2}(\norm{v}_{L^2}^2\norm{\nabla v}_{L^2}^2+\norm{v}_{L^2}^4), & \mbox{if } N=1,2, \\
                             k^{-\frac{2}{N-1}}\norm{v}_{L^{\frac{2N}{N-1}}}^{\frac{2N}{N-1}}
                             \leq C(\Omega)k^{-\frac{2}{N-1}}(\norm{v}_{L^2}^{\frac{N}{N-1}}\norm{\nabla v}_{L^2}^{\frac{N}{N-1}}+\norm{v}_{L^2}^{\frac{2N}{N-1}}), & \mbox{if }N\geq 3.
                           \end{cases}
  \end{equation*}
  By choosing $k=C_1\norm{v}_{L^2}>0$ with $C_1>0$ large enough (since the case $\norm
  {v}_{L^2}=0$ is trivial) and using Young's inequality, we get
  \begin{equation*}
    \int_{|v|>k}|v|^2dx\leq\frac{1}{4}\norm{v}_{L^2}^2+C(\Omega)\norm{\nabla v}_{L^2}^2.
  \end{equation*}
  Therefore, combining all the above estimates, we have from Young's inequality that
  \begin{equation*}
  \begin{aligned}
    \norm{v}_{L^2}^2&\leq \frac{1}{4}\norm{v}_{L^2}^2+C(\Omega)\norm{\nabla v}_{L^2}^2+C(M,E_0,\Omega,\gamma)\left[\left(\int_{\Omega}\rho|v|dx\right)^2+\norm{
    v}_{L^2}^{2-\frac{2}{\gamma'}}\norm{\nabla v}_{L^2}^{\frac{2}{\gamma'}}\right]\\
    &\leq\frac{1}{2}\norm{v}_{L^2}^2+C(M,E_0,\Omega,\gamma)\left[\norm{\nabla v}_{L^2(\Omega)}^2+\left(\int_{\Omega}\rho|v|dx\right)^2\right],
  \end{aligned}
  \end{equation*}
  which implies the desired inequality in Lemma \ref{lemma-Feireisl} immediately.

\end{proof}

\begin{rem}
  The constant $c$ in the original lemma of \cite{Feireisl2004B} depends only on $M,E_0$. However, it is unreasonable without considering the bound of $\Omega$. Here, we give a counterexample as follows:

  Let $\Omega=B(0,R+2)\in\mathbb{R}^3$ be a ball originated at zero point with radius $R+2$ for $R>1$ large enough. Then we can define
  \begin{equation*}
    v(x)=f(|x|)=\begin{cases}
                  1, & \mbox{if $|x|\leq R$}, \\
                  0, & \mbox{if $|x|\geq R+1$},
                \end{cases}
    \qquad \rho(x)=g(|x|)=\begin{cases}
                            1, & \mbox{if } |x|\leq 1, \\
                            2-|x|, & \mbox{if } 1<|x|<2, \\
                            0, & \mbox{if } 2\leq|x|\leq R, \\
                            R^{-2}(|x|-R), & \mbox{if } R<|x|<R+1, \\
                            R^{-2}, & \mbox{if }|x|\geq R+1.
                          \end{cases}
  \end{equation*}
  Here, we need $v\in C^1(\Omega)$. Then, it is easy to check that
  \begin{equation*}
    \norm{v}_{L^2}^2\sim R^3+R^2,\quad \norm{\nabla v}_{L^2}^2\lesssim R^2,\quad \int_{\Omega}\rho|v|dx\sim 1,\quad \int_{\Omega}\rho dx\sim 1,\quad \int_{\Omega}\rho^{\gamma}dx\sim 1,
  \end{equation*}
  which implies that the constant $c$ depends on $R>1$, otherwise the inequality in Lemma \ref{lemma-Feireisl} is invalid for large $R>1$.
\end{rem}
\begin{rem}
  For the boundary condition \eqref{Dirichlet-condition} or \eqref{Navier-slip-condition}, the solution in Theorem \ref{thm-blowup-FCNS} satisfies the mass conservation:
  $$\int\rho=\int\rho_0:=M_0>0.$$
  Thus, under the condition of Lemma \ref{lemma-Feireisl}, we have the following estimate by H\"{o}lder inequality:
  \begin{equation}\label{L2-control-lem}
    \norm{v}_{L^2(\Omega)}^2\leq c(E_0,M_0)\left(\norm{\nabla v}_{L^2(\Omega)}^2+\int_{\Omega}\rho|v|^2dx\right).
  \end{equation}
\end{rem}

The following trace inequality comes from Theorem 1.6.6 in \cite{Brenner2008}.
\begin{lem}
Suppose that the domain $\Omega$ has a Lipschitz boundary and $p\in[1,\infty]$. Then, there exists a positive constant $C$ such that
  \begin{equation}\label{trace-inequality}
    \norm{v}_{L^p(\pa\Omega)}\leq C\norm{v}_{L^p(\Omega)}^{1-\frac{1}{p}}\norm{v}_{W^{1,p}(\Omega)}^{\frac{1}{p}}, \quad \forall v\in W^{1,p}(\Omega).
  \end{equation}
\end{lem}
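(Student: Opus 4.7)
The plan is to establish the trace inequality by localization to the half-space followed by the fundamental theorem of calculus and Hölder's inequality. I will treat the case $p \in [1,\infty)$ first; the case $p = \infty$ reduces to the standard continuity/boundary-trace estimate and is essentially immediate.

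\medskip

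\textbf{Step 1: Reduction to the half-space.} Since $\pa\Omega$ is Lipschitz, I would cover $\pa\Omega$ by finitely many open sets $U_1,\dots, U_N$ and pick a subordinate smooth partition of unity $\{\eta_j\}$ on a neighborhood of $\pa\Omega$. For each $j$, a bi-Lipschitz change of variables $\Phi_j$ straightens $\pa\Omega\cap U_j$ into a piece of $\{x_n=0\}$, mapping $U_j\cap\Omega$ into the half-space $\mathbb{R}^n_+$. Under such a bi-Lipschitz map, $W^{1,p}$-norms and $L^p$-norms on the boundary and in the bulk are preserved up to constants depending only on the Lipschitz constants of $\Phi_j$ and its inverse. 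Thus it suffices to prove the estimate for a function $w\in W^{1,p}(\mathbb{R}^n_+)$ with compact support in a fixed neighborhood of the origin.

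\medskip

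\textbf{Step 2: Pointwise identity on the half-space.} By density, assume first that $w\in C_c^1(\overline{\mathbb{R}^n_+})$. Writing $x=(x',x_n)$ with $x_n\geq 0$, the fundamental theorem of calculus gives
\begin{equation*}
|w(x',0)|^p \;=\; -\int_0^{\infty}\pa_{x_n}\bigl(|w(x',x_n)|^p\bigr)\,\mmd x_n
\;\leq\; p\int_0^{\infty}|w(x',x_n)|^{p-1}\,|\pa_{x_n}w(x',x_n)|\,\mmd x_n.
\end{equation*}
Integrating in $x'\in\mathbb{R}^{n-1}$ and applying Hölder's inequality in $\mathbb{R}^n_+$ with exponents $\tfrac{p}{p-1}$ and $p$ yields
\begin{equation*}
\int_{\mathbb{R}^{n-1}}|w(x',0)|^p\,\mmd x'
\;\leq\; p\,\norm{w}_{L^p(\mathbb{R}^n_+)}^{\,p-1}\,\norm{\nabla w}_{L^p(\mathbb{R}^n_+)}.
\end{equation*}
Taking the $p$-th root and using $\norm{\nabla w}_{L^p}\leq \norm{w}_{W^{1,p}}$ gives the desired multiplicative bound in the half-space model.

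\medskip

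\textbf{Step 3: Density and reassembly.} Smooth functions up to the boundary are dense in $W^{1,p}(\mathbb{R}^n_+)$ for $p\in[1,\infty)$, and both sides of the inequality are continuous in the $W^{1,p}$ topology (the boundary trace on the flat side is a bounded operator into $L^p$), so the estimate extends to all $w\in W^{1,p}$. Transporting back via $\Phi_j^{-1}$ introduces only multiplicative constants, and summing over $j$ after writing $v=\sum_j \eta_j v + (1-\sum_j\eta_j)v$ (the last term vanishes near $\pa\Omega$) yields the global inequality with a constant $C$ depending on the Lipschitz character of $\pa\Omega$ and on $p$.

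\medskip

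\textbf{Main obstacle.} The only genuine subtlety is that $\pa\Omega$ is merely Lipschitz, not $C^1$, so the flattening maps $\Phi_j$ are only bi-Lipschitz; verifying that the boundary trace and the gradient transform correctly under such maps (with bounded Jacobians almost everywhere, by Rademacher's theorem) is the technical point that requires care. Once this is in place the rest is a direct computation. The case $p=\infty$ follows by letting $p\to\infty$ in the finite-$p$ estimate, or more directly by the essential-boundedness definition of the trace.
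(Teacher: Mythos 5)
Your argument is correct, and it coincides with the standard textbook proof of this multiplicative trace inequality. In the paper this lemma is not proved at all — it is quoted as Theorem 1.6.6 of Brenner--Scott \cite{Brenner2008} — and the proof there is essentially the same localization--plus--fundamental-theorem-of-calculus argument you give: flatten the boundary with bi-Lipschitz charts (Rademacher handling the a.e.\ differentiability), establish the half-space estimate $\int_{\mathbb{R}^{n-1}}|w(x',0)|^p\,\mmd x' \leq p\,\norm{w}_{L^p}^{p-1}\norm{\pa_{x_n}w}_{L^p}$ by integrating $\pa_{x_n}|w|^p$ in the normal direction and applying H\"older, and reassemble via the partition of unity (noting that the products $\eta_j v$ have $L^p$ and $W^{1,p}$ norms controlled by those of $v$, so the multiplicative form survives summation). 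One small imprecision: for $p=1$ you should invoke absolute continuity of $|w|$ rather than classical differentiability, and for $p=\infty$ the ``let $p\to\infty$'' route is not quite justified as stated (the boundary $L^p$ norms and the chart constants both need uniform control); but the alternative you offer — that $W^{1,\infty}$ functions on a Lipschitz domain are Lipschitz up to the boundary, so the trace is bounded pointwise by the interior sup — is the clean and correct way to close that endpoint. Overall the proposal is sound and matches the cited source.
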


Recall the Lam\'{e}'s system (see \cite{Cai-Li2023,Nirenberg1959,Agmon-Douglis-Nirenberg1964}):
\begin{equation}\label{Lame-system}
  \begin{cases}
    Lu=\mu\Delta u+(\mu+\lambda)\nabla\div u=f,\\
    \mbox{\eqref{Cauchy-condition} or \eqref{Dirichlet-condition} or \eqref{Navier-slip-condition} holds}.
  \end{cases}
\end{equation}
Then, for any $p\in(1,\infty)$ and integer $k\geq 0$, there exists a positive constant $C$ such that
\begin{equation}\label{Lame-estimate}
\begin{aligned}
\norm{u}_{W^{k+2,p}}&\leq C(\norm{f}_{W^{k,p}}+\norm{u}_{L^p}), \\
\norm{u}_{W^{k+1,p}}&\leq C(\norm{g}_{W^{k,p}}+\norm{u}_{L^p}), \mbox{ for }f=\nabla g.
\end{aligned}
\end{equation}
In particular, for the case \eqref{Dirichlet-condition} and \eqref{Navier-slip-condition}, the term $\norm{u}_{L^p}$ on the righthand side of inequalities can be removed. For the case \eqref{Cauchy-condition}, the disappearance of $\norm{u}_{L^p}$ only occurs at the estimate of $\norm{\nabla^{k+2}u}_{L^p}$ in the first inequality or $\norm{\nabla^{k+1}u}_{L^p}$ in the second.

Here, we also introduce the decomposition $u=g+h$ from \cite{Sun2011} to deal with the Dirichlet problem, where $h$ is the unique solution to
\begin{equation}\label{h-pressure-eq}
  \begin{cases}
    Lh=\nabla P, & \mbox{in } \Omega, \\
    h|_{\pa\Omega}=0, & \mbox{if $\Omega$ is bounded},\\
    h\rightarrow 0, & \mbox{as }|x|\rightarrow\infty, \mbox{ if }\Omega=\mathbb{R}^3,
  \end{cases}
\end{equation}
and $g$ satisfies
\begin{equation}\label{g-material-eq}
  \begin{cases}
    Lg=\rho\dot{u}, & \mbox{in } \Omega, \\
    g|_{\pa\Omega}=0, & \mbox{if $\Omega$ is bounded},\\
    g\rightarrow 0, & \mbox{as }|x|\rightarrow\infty, \mbox{ if }\Omega=\mathbb{R}^3.
  \end{cases}
\end{equation}
From \eqref{Lame-estimate}, when $\Omega$ is bounded, we have
\begin{equation}\label{h-g-estimate-Dirichlet}
  \norm{h}_{W^{1,p}}\leq C\norm{P(\rho,\theta)-P(\tilde{\rho},\tilde{\theta})}_{L^p},\quad\norm{\nabla^2h}_{L^p}\leq C\norm{\nabla P}_{L^p},\quad \norm{g}_{W^{2,p}}\leq C\norm{\rho\dot{u}}_{L^p},
\end{equation}
and
\begin{equation}\label{h-g-estimate-Whole}
  \norm{\nabla h}_{L^p}\leq C\norm{P(\rho,\theta)-P(\tilde{\rho},\tilde{\theta})}_{L^p},\quad\norm{\nabla^2h}_{L^p}\leq C\norm{\nabla P}_{L^p},\quad \norm{\nabla^2g}_{L^p}\leq C\norm{\rho\dot{u}}_{L^p},
\end{equation}
for any $p\in(1,\infty)$. Furthermore, there exists an estimate on $h$ in Lorentz spaces as follows:
\begin{equation}\label{h-estimate-Lorentz}
  \norm{\nabla h}_{L^{p,\infty}}\leq C\norm{P(\rho,\theta)-P(\tilde{\rho},\tilde{\theta})}_{L^{p,\infty}},
\end{equation}
for any $p\in(1,\infty)$. One can refer to \cite{Adimurthi2021,Coifman1974} for details.

\section{Proof of Theorem \ref{thm-blowup-FCNS}}\label{sect-proof-theorem-FCNS-theta}
 Assume that $T^*<\infty$ and there exist constants $r\in(\frac{3}{2},\infty]$ and $s\in[1,\infty]$ satisfying \eqref{index-blowup1}, i.e.
 \begin{equation*}
   \frac{2}{s}+\frac{3}{r}\leq 2
 \end{equation*}
  such that
\begin{equation}\label{assumption1}
  \norm{\rho}_{L^{\infty}(0,T;L^{\infty})}+\norm{\theta-\tilde{\theta}}_{L^s(0,T;L^r_w)}\leq M^*<\infty,
\end{equation}
for any $T\in(0,T^*)$. Our aim is to show that under the assumption \eqref{assumption1} and the conditions of Theorem \ref{thm-blowup-FCNS}, there exists a upper bound $C>0$ depending only on $M^*$, $T^*$, initial data and the viscosity coefficients such that
\begin{equation}\label{goal-inequality}
\begin{aligned}
  &\sup_{0\leq t<T^*}(\norm{\rho-\tilde{\rho}}_{W^{1,q}\cap H^1}+\norm{(u,\theta-\tilde{\theta})}_{D_0^1\cap D^2}+\norm{\rho_t}_{L^q\cap L^2}+\norm{(\sqrt{\rho}u_t,\sqrt{\rho}\theta_t)}_{L^2})\\
  &\qquad+\int_0^{T^*}(\norm{(u_t,\theta_t)}_{D^1}^2+\norm{(u,\theta)}_{D^{2,q}}^2)dt\leq C.
\end{aligned}
\end{equation}
It should be noted that when $\Omega$ is bounded, one can take $\tilde{\rho}=\tilde{\theta}=0$ in \eqref{goal-inequality}. For simplicity, we involve the constants $\tilde{\rho}$ and $\tilde{\theta}$ even in the context of the initial-boundary value problem.

\smallskip

In the rest of this section, we devote ourselves to deriving the uniform energy estimate \eqref{goal-inequality}.
\subsection{Estimates on 0-order energy.}
In this subsection, we will give several lemmas on the 0-order estimates which will be used in the next subsection.

\begin{lem}\label{lem-u-L^4}
Under the assumptions of Theorem \ref{thm-blowup-FCNS} and \eqref{assumption1}, it holds that for any small $\epsilon>0$,
\begin{equation}\label{u-L^4-estimate}
\begin{aligned}
  \frac{d}{dt}\int\rho|u|^4+\mu\int|u|^2|\nabla u|^2&\leq C_{\epsilon}\left(1+\norm{\theta-\tilde{\theta}}_{L^{r,\infty}}^s\right)\left(\norm{\sqrt{\rho}|u|^2}_{L^2}^2+\norm{\sqrt{\rho}(\theta-\tilde{\theta})}_{L^2}^2+\norm{\sqrt{\rho}u}_{L^2}^2\right)\\
  &\qquad+\epsilon\norm{\nabla\theta}_{L^2}^2+C\int|u|^2|\div u|^2.
\end{aligned}
\end{equation}
In particular, $\norm{\sqrt{\rho}u}_{L^2}$ in the right-hand side of this and below inequality in this subsection only appears when $\tilde{\theta}>0$.
\end{lem}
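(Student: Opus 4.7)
My plan is to test the momentum equation $\eqref{FCNS-eq}_2$ with $4|u|^2 u$ and integrate over $\Omega$. Using the continuity equation $\eqref{FCNS-eq}_1$ the time and convective terms collapse into $\frac{d}{dt}\int\rho|u|^4$ on the left. The viscous contribution is obtained after one integration by parts: boundary terms vanish for the Cauchy and Dirichlet cases, and for Navier-slip they can be handled using $u\cdot n=0$ together with the identity derived from $\curl u\times n=0$. The $-\mu\Delta u$ piece produces $4\mu\int|u|^2|\nabla u|^2+2\mu\int|\nabla|u|^2|^2$, while the $-(\mu+\lambda)\nabla\div u$ piece produces $4(\mu+\lambda)\int|u|^2(\div u)^2+4(\mu+\lambda)\int\div u\,(u\cdot\nabla|u|^2)$; the sign-indefinite cross term is controlled by Young's inequality, absorbing a small fraction of $\int|u|^2|\nabla u|^2$ back into the good term on the left and leaving $C\int|u|^2(\div u)^2$ on the right (this piece will be controlled in a later step via the temperature equation).

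For the pressure, I would integrate by parts after subtracting the far-field constant to obtain $4\int(P-R\tilde\rho\tilde\theta)\div(|u|^2 u)$, then split $P-R\tilde\rho\tilde\theta = R\rho(\theta-\tilde\theta)+R\tilde\theta(\rho-\tilde\rho)$ and use $\norm{\rho}_{L^\infty}\leq M^*$. The first piece reduces to controlling $\int|\theta-\tilde\theta||u|^2|\nabla u|$, which I would treat by the Lorentz H\"older inequality \eqref{Lorentz-Holder} pairing $\norm{\theta-\tilde\theta}_{L^{r,\infty}}$ with $\norm{|u|^2|\nabla u|}_{L^{r',1}}$, followed by a second Lorentz H\"older to factor off an $L^2$-piece $|u||\nabla u|$ (absorbed into $\mu\int|u|^2|\nabla u|^2$ up to $\epsilon$) and a remaining factor in $|u|$. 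That factor is estimated by Lorentz interpolation \eqref{Lorentz-interpolation} between an $L^2$-type and $L^6$-type bound on $|u|^2$: the $L^6$ side comes from Sobolev applied to $|u|^2$, and the $L^2$ side in the bounded-domain cases is supplied by Lemma \ref{lemma-Feireisl} applied to $v=|u|^2$, which converts $\sqrt{\rho}$-weighted control into unweighted control. Young's inequality with the exponent dictated by $\frac{2}{s}+\frac{3}{r}=2$ produces the factor $\norm{\theta-\tilde\theta}_{L^{r,\infty}}^s$ multiplying $\norm{\sqrt{\rho}|u|^2}_{L^2}^2+\norm{\sqrt{\rho}(\theta-\tilde\theta)}_{L^2}^2$. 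The second piece $R\tilde\theta(\rho-\tilde\rho)\div(|u|^2 u)$ is present only when $\tilde\theta>0$ and, through an analogous scheme, yields the $\norm{\sqrt{\rho}u}_{L^2}^2$ contribution. A residual term in which a derivative lands on $\theta$ (which arises when one partially undoes the integration by parts and writes $\nabla P=R\rho\nabla\theta+R\theta\nabla\rho$) gives, after Cauchy-Schwarz combined with $\norm{\sqrt{\rho}|u|^3}_{L^2}\lesssim\norm{\nabla u}_{L^2}^3$ (Sobolev $H^1\hookrightarrow L^6$) and Young, exactly the allowed $\epsilon\norm{\nabla\theta}_{L^2}^2$ on the right.

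The main obstacle I expect is running the Lorentz-space interpolation uniformly across the full range $r\in(\tfrac{3}{2},\infty]$ while using only the \emph{weak} norm of $\theta-\tilde\theta$. For $r>2$ the natural strong-type H\"older produces the right structure directly, but near the endpoint $r=\tfrac{3}{2}$ (where $s$ is large) one is forced to interpolate through Lorentz scales, and Lemma \ref{lemma-Feireisl} becomes essential in the bounded-domain cases because the basic energy only furnishes $\sqrt{\rho}u\in L^2$, not $u\in L^2$. A secondary technical point is the Navier-slip boundary treatment in the initial viscous integration by parts, since the cancellations that are automatic for Dirichlet must be replaced by boundary identities coming from $\curl u\times n=0$ and $u\cdot n=0$.
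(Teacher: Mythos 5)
The overall structure you propose matches the paper's: test $\eqref{FCNS-eq}_2$ against $4|u|^2u$, absorb the $(\mu+\lambda)$ cross term by Young (leaving $C\int|u|^2|\div u|^2$), split the pressure through $\tilde\theta$, and estimate the $\rho(\theta-\tilde\theta)$ contribution via Lorentz H\"older together with $L^2$--$L^6$ interpolation and Young with the exponent coming from $\tfrac2s+\tfrac3r\leq2$. However, there is a genuine gap in the order in which you run the H\"older/Lorentz steps, and you have in fact flagged exactly the right worry without resolving it.

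Your plan is to apply Lorentz H\"older \eqref{Lorentz-Holder} first to $\int|\theta-\tilde\theta||u|^2|\nabla u|$, pulling out $\norm{\theta-\tilde\theta}_{L^{r,\infty}}$ against $\norm{|u|^2|\nabla u|}_{L^{r',1}}$ with $r'=\tfrac{r}{r-1}$, and then factor $|u|^2|\nabla u|=(|u||\nabla u|)\cdot|u|$, putting $|u||\nabla u|\in L^2$ and the leftover $|u|$ in $L^{p,2}$ with $\tfrac1{r'}=\tfrac12+\tfrac1p$, so $p=\tfrac{2r}{r-2}$. This already breaks down for $r\in(\tfrac32,2]$ (where $r'\geq2$, so no room to peel off an $L^2$ factor), and even for $r\in(2,3)$ the remaining factor sits at $p=\tfrac{2r}{r-2}>6$, outside the $L^2$--$L^6$ interpolation window, with interpolation exponent $\tau=\tfrac3r>1$. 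So this factorization covers only $r>3$, not the full range $(\tfrac32,\infty]$ required by the lemma. The paper avoids this entirely by applying Young \emph{first} to $C\int\rho(\theta-\tilde\theta)|u|^2|\nabla u|$, peeling off a small multiple of $\int|u|^2|\nabla u|^2$ and reducing to $I=C\int\rho^2(\theta-\tilde\theta)^2|u|^2$; then it writes the integrand as $\rho\cdot(\theta-\tilde\theta)\cdot\bigl[\sqrt\rho(\theta-\tilde\theta)\bigr]\cdot\bigl[\sqrt\rho|u|^2\bigr]$ and applies Lorentz H\"older. Both remaining factors now land in $L^{\frac{2r}{r-1},2}$, and $\tfrac{2r}{r-1}\in(2,6)$ for every $r\in(\tfrac32,\infty)$, so the subsequent Lebesgue interpolation between $L^2$ and $L^6$ is uniformly available. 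This "Young before Lorentz" ordering is exactly what closes the endpoint cases you worried about, and it is the missing idea in your plan.

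Two smaller points. First, your attribution of the $\epsilon\norm{\nabla\theta}_{L^2}^2$ term to a partial undoing of the integration by parts ($\nabla P=R\rho\nabla\theta+R\theta\nabla\rho$) is not the mechanism used, and it would not give the stated right-hand side: Cauchy--Schwarz against $\rho\nabla\theta\cdot|u|^2u$ produces $C\int\rho^2|u|^6\lesssim\norm{\nabla u}_{L^2}^6$, which is not one of the allowed quantities in \eqref{u-L^4-estimate}. In the paper, $\epsilon\norm{\nabla\theta}_{L^2}^2$ arises from the Sobolev bound $\norm{\sqrt\rho(\theta-\tilde\theta)}_{L^6}\lesssim\norm{\nabla\theta}_{L^2}+\norm{\sqrt\rho(\theta-\tilde\theta)}_{L^2}$ applied to the $L^6$-piece of the interpolation (with \eqref{L2-control-lem} supplying the lower-order part in the bounded cases). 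Second, your split $P-R\tilde\rho\tilde\theta=R\rho(\theta-\tilde\theta)+R\tilde\theta(\rho-\tilde\rho)$ produces, after Young, a term $\int|\rho-\tilde\rho|^2|u|^2$ which, when $\tilde\rho>0$, contains $\tilde\rho^2\int|u|^2$; but $u\in L^2$ is not part of the regularity class \eqref{strong-sol-defi}. The paper's split $P=R\rho(\theta-\tilde\theta)+R\tilde\theta\rho$ avoids this because $\int\rho^2|u|^2\leq\norm{\rho}_{L^\infty}\int\rho|u|^2=\norm{\rho}_{L^\infty}\norm{\sqrt\rho u}_{L^2}^2$ always holds under \eqref{assumption1}.
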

\begin{proof}
  Multiplying \eqref{FCNS-eq}$_2$ by $4|u|^2u$, and then integrating the resultant equation over $\Omega$, we have from integration by parts that
  \begin{equation}\label{L^4-estimate-1}
    \begin{aligned}
    &\quad\frac{d}{dt}\int\rho|u|^4+\int4|u|^2\left(\mu|\nabla u|^2+(\lambda+\mu)|\div u|^2+2\mu|\nabla|u||^2\right)\\
    &=4\int P\div(|u|^2u)-8(\lambda+\mu)\int\div u|u|u\cdot\nabla|u|\\
    &\leq C\int\rho(\theta-\tilde{\theta})|u|^2|\nabla u|+C\tilde{\theta}\int\rho|u|^2|\nabla u|+2\mu\int|u|^2|\nabla|u||^2+C\int|u|^2|\div u|^2\\
    &\leq C\int\rho^2|\theta-\tilde{\theta}|^2|u|^2+C\int\rho^2|u|^2+2\mu\int|u|^2|\nabla u|^2+2\mu\int|u|^2|\nabla|u||^2+C\int|u|^2|\div u|^2,
    \end{aligned}
  \end{equation}
  which implies that
  \begin{equation}\label{L^4-estimate-2}
    \begin{aligned}
    &\quad\frac{d}{dt}\int\rho|u|^4+2\mu\int|u|^2\left(\mu|\nabla u|^2+|\nabla|u||^2\right)\\
    &\leq C\int\rho^2|\theta-\tilde{\theta}|^2|u|^2+C\int\rho|u|^2+C\int|u|^2|\div u|^2.
    \end{aligned}
  \end{equation}
   Next, we estimate $\displaystyle I:=C\int\rho^2|\theta-\tilde{\theta}|^2|u|^2$. There exist two cases for the index of blowup criterion \eqref{index-blowup1} as follows:
  \begin{itemize}
    \item If $r=\infty$, then $L^{\infty}_w=L^{\infty}$. Thus, by virtue of \eqref{assumption1}, we have
    \begin{equation}\label{L^4-estimate-3-1}
    \begin{aligned}
      I&\lesssim\norm{\theta-\tilde{\theta}}_{L^{\infty}}\norm{\sqrt{\rho}(\theta-\tilde{\theta})}_{L^2}\norm{\sqrt{\rho}|u|^2}_{L^2}\\
      &\lesssim(1+\norm{\theta-\tilde{\theta}}_{L^{\infty}}^s)\left(\int\rho(\theta-\tilde{\theta})^2+\int\rho|u|^4\right).
    \end{aligned}
    \end{equation}
    \item If $\frac{3}{2}<r<\infty$, by \eqref{assumption1}, \eqref{Lorentz-Holder}, \eqref{Lorentz-interpolation} and interpolation inequality, we get
    \begin{equation}\label{L^4-estimate-3-2}
    \begin{aligned}
      I&\lesssim\norm{\theta-\tilde{\theta}}_{L^{r,\infty}}\norm{\sqrt{\rho}(\theta-\tilde{\theta})}_{L^{\frac{2r}{r-1},2}}\norm{\sqrt{\rho}|u|^2}_{L^{\frac{2r}{r-1},2}}\\
      &\lesssim\norm{\theta-\tilde{\theta}}_{L^{r,\infty}}\norm{\sqrt{\rho}(\theta-\tilde{\theta})}_{L^{\frac{2r_1}{r_1-1}}}^{\frac{1}{2}}\norm{\sqrt{\rho}(\theta-\tilde{\theta})}_{L^{\frac{2r_2}{r_2-1}}}^{\frac{1}{2}}
      \norm{\sqrt{\rho}|u|^2}_{L^{\frac{2r_1}{r_1-1}}}^{\frac{1}{2}}\norm{\sqrt{\rho}|u|^2}_{L^{\frac{2r_2}{r_2-1}}}^{\frac{1}{2}}\\
      &\lesssim\norm{\theta-\tilde{\theta}}_{L^{r,\infty}}\norm{\sqrt{\rho}(\theta-\tilde{\theta})}_{L^{\frac{2r_1}{r_1-1}}}\norm{\sqrt{\rho}(\theta-\tilde{\theta})}_{L^{\frac{2r_2}{r_2-1}}}\\
      &\qquad+\norm{\theta-\tilde{\theta}}_{L^{r,\infty}}\norm{\sqrt{\rho}|u|^2}_{L^{\frac{2r_1}{r_1-1}}}\norm{\sqrt{\rho}|u|^2}_{L^{\frac{2r_2}{r_2-1}}}\\
      &\lesssim\norm{\theta-\tilde{\theta}}_{L^{r,\infty}}\norm{\sqrt{\rho}|u|^2}_{L^2}^{1-\frac{3}{2r_1}}\norm{\sqrt{\rho}|u|^2}_{L^6}^{\frac{3}{2r_1}}
      \norm{\sqrt{\rho}|u|^2}_{L^2}^{1-\frac{3}{2r_2}}\norm{\sqrt{\rho}|u|^2}_{L^6}^{\frac{3}{2r_2}}\\
      &\quad+\norm{\theta-\tilde{\theta}}_{L^{r,\infty}}\norm{\sqrt{\rho}(\theta-\tilde{\theta})}_{L^2}^{1-\frac{3}{2r_1}}\norm{\sqrt{\rho}(\theta-\tilde{\theta})}_{L^6}^{\frac{3}{2r_1}}
      \norm{\sqrt{\rho}(\theta-\tilde{\theta})}_{L^2}^{1-\frac{3}{2r_2}}\norm{\sqrt{\rho}(\theta-\tilde{\theta})}_{L^6}^{\frac{3}{2r_2}}\\
      &\lesssim\norm{\theta-\tilde{\theta}}_{L^{r,\infty}}\norm{\sqrt{\rho}|u|^2}_{L^2}^{2-\frac{3}{r}}\norm{\sqrt{\rho}|u|^2}_{L^6}^{\frac{3}{r}}
      +\norm{\theta-\tilde{\theta}}_{L^{r,\infty}}\norm{\sqrt{\rho}(\theta-\tilde{\theta})}_{L^2}^{2-\frac{3}{r}}\norm{\sqrt{\rho}(\theta-\tilde{\theta})}_{L^6}^{\frac{3}{r}}\\
      &\leq C_{\epsilon}\norm{\theta-\tilde{\theta}}_{L^{r,\infty}}^{\frac{2}{2-\frac{3}{r}}}\left(\norm{\sqrt{\rho}|u|^2}_{L^2}^2+\norm{\sqrt{\rho}(\theta-\tilde{\theta})}_{L^2}^2\right)+\epsilon\norm{|u|^2}_{L^6}^2+\epsilon\norm{\theta-\tilde{\theta}}_{L^6}^2\\
      &\leq C_{\epsilon}\left(1+\norm{\theta-\tilde{\theta}}_{L^{r,\infty}}^s\right)\left(\norm{\sqrt{\rho}|u|^2}_{L^2}^2+\norm{\sqrt{\rho}(\theta-\tilde{\theta})}_{L^2}^2\right)+C\epsilon\left(\norm{u|\nabla u|}_{L^2}^2+\norm{\nabla\theta}_{L^2}^2\right),
    \end{aligned}
    \end{equation}
    where $\frac{3}{2}<r_1<r<r_2<\infty$ satisfy $\frac{2}{r}=\frac{1}{r_1}+\frac{1}{r_2}$, and  we have used the Sobolev inequality and \eqref{L2-control-lem} for the bounded domain case in the last inequality.
  \end{itemize}

  Consequently, substituting \eqref{L^4-estimate-3-1} and \eqref{L^4-estimate-3-2} into \eqref{L^4-estimate-2}, we can get \eqref{u-L^4-estimate}, and thus complete
  the proof of Lemma \ref{lem-u-L^4}.
\end{proof}
Next, we deduce the estimate on the term $\displaystyle\int|u|^2|\div u|^2$, which is stated in the following lemma.
\begin{lem}\label{lem-divu-L^2}
Under the assumptions of Theorem \ref{thm-blowup-FCNS} and \eqref{assumption1}, for any given $\lambda>0$, it holds that
\begin{equation}\label{divu-L^2-estimate}
  \begin{aligned}
  &\quad2\mu\int|\mathcal{D}(u)|^2|u|^2+\frac{\lambda}{2}\int|\div u|^2|u|^2\\
  &\leq\frac{d}{dt}\int c_v\rho|u|^2(\theta-\tilde{\theta})+\epsilon\int\rho|u_t|^2+\epsilon\int|u|^2|\nabla u|^2+\epsilon\int|\nabla\theta|^2\\
  &\qquad+C_{\epsilon}\left(\norm{\theta-\tilde{\theta}}_{L^{r,\infty}}^s+1\right)\left(\norm{\sqrt{\rho}|u|^2}_{L^2}^2
  +\norm{\sqrt{\rho}(\theta-\tilde{\theta})}_{L^2}^2+\norm{\sqrt{\rho}u}_{L^2}^2\right)
  \end{aligned}
\end{equation}
with $\epsilon>0$ arbitrarily small.
\end{lem}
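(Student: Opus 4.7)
The plan is to test the temperature equation $\eqref{FCNS-eq}_3$ against $|u|^2$ and integrate over $\Omega$. The key observation is that the right-hand side of that equation is exactly $2\mu|\mathcal{D}(u)|^2+\lambda(\div u)^2$, so the test produces $\int 2\mu|\mathcal{D}(u)|^2|u|^2+\lambda\int(\div u)^2|u|^2$ on the left---the quantity of interest, but with $\lambda$ rather than $\lambda/2$. This gives a slack of $\frac{\lambda}{2}\int(\div u)^2|u|^2$ available for later absorption.

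For the convective left-hand side of $\eqref{FCNS-eq}_3$, subtracting the constant $\tilde{\theta}$ via mass conservation rewrites it as $c_v[\pa_t(\rho(\theta-\tilde{\theta}))+\div(\rho u(\theta-\tilde{\theta}))]$. Pairing with $|u|^2$ and integrating by parts in both time and space (the boundary integrals vanish since $u\cdot n|_{\pa\Omega}=0$ or there is no boundary) yields the desired $c_v\frac{d}{dt}\int\rho|u|^2(\theta-\tilde{\theta})$ together with the remainder $-2c_v\int\rho(\theta-\tilde{\theta})u\cdot\dot{u}$. The heat-conduction term $-\kappa\int\Delta\theta|u|^2$ is also integrated by parts, the boundary piece vanishing in every case because $\frac{\pa\theta}{\pa n}|_{\pa\Omega}=0$, and the resulting $\kappa\int\nabla\theta\cdot\nabla|u|^2$ is bounded by $\epsilon\int|\nabla\theta|^2+C_{\epsilon}\int|u|^2|\nabla u|^2$ via Cauchy--Schwarz and $|\nabla|u|^2|\leq 2|u||\nabla u|$. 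For the pressure-divergence term, writing $P=R\rho(\theta-\tilde{\theta})+R\rho\tilde{\theta}$ and applying Young's inequality allows the $\frac{\lambda}{2}\int(\div u)^2|u|^2$ slack to absorb the cross pieces, producing $C_{\epsilon}\int\rho^2|\theta-\tilde{\theta}|^2|u|^2+C\tilde{\theta}^2\int\rho^2|u|^2$. The remainder $-2c_v\int\rho(\theta-\tilde{\theta})u\cdot\dot{u}$ is treated analogously after expanding $\dot u=u_t+u\cdot\nabla u$, yielding $\epsilon\int\rho|u_t|^2+\epsilon\int|u|^2|\nabla u|^2+C_{\epsilon}\int\rho^2|\theta-\tilde{\theta}|^2|u|^2$.

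What remains is to dominate the two integrals $\int\rho^2|\theta-\tilde{\theta}|^2|u|^2$ and $\int\rho^2|u|^2$ by the quantities on the right of \eqref{divu-L^2-estimate}. The second is trivial since $\norm{\rho}_{L^\infty}\leq M^*$ gives $\int\rho^2|u|^2\leq M^*\norm{\sqrt{\rho}u}_{L^2}^2$. For the first, the same uniform bound reduces it to $M^*\int\rho|\theta-\tilde{\theta}|^2|u|^2$, which is precisely the integral $I$ already handled in the proof of Lemma \ref{lem-u-L^4}: the Lorentz H\"older inequality \eqref{Lorentz-Holder}, the interpolation identity \eqref{Lorentz-interpolation}, Sobolev embedding, and---on bounded domains---the Poincar\'e-type inequality \eqref{L2-control-lem} yield the bound $C_{\epsilon}(1+\norm{\theta-\tilde{\theta}}_{L^{r,\infty}}^s)(\norm{\sqrt{\rho}|u|^2}_{L^2}^2+\norm{\sqrt{\rho}(\theta-\tilde{\theta})}_{L^2}^2)+\epsilon\int|u|^2|\nabla u|^2+\epsilon\int|\nabla\theta|^2$. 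Collecting everything and absorbing the $\epsilon$-terms gives \eqref{divu-L^2-estimate}. I expect the main obstacle to be simply the bookkeeping around the $(\div u)^2|u|^2$ slack: one must ensure that splitting $P\div u|u|^2$ via Young does not swallow more than $\lambda/2$ before the Lorentz machinery from Lemma \ref{lem-u-L^4} is invoked, but since that estimate already isolates the gradient contributions in an $\epsilon$-dependent fashion, no new analytic ingredient is needed.
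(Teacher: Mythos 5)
Your proposal is correct and follows essentially the same route as the paper: test $\eqref{FCNS-eq}_3$ against $|u|^2$, pull out $\frac{d}{dt}\int c_v\rho|u|^2(\theta-\tilde\theta)$ via mass conservation (your $-2c_v\int\rho(\theta-\tilde\theta)\,u\cdot\dot u$ remainder is identical to the paper's $-\int c_v[(\rho|u|^2)_t+\div(\rho|u|^2 u)](\theta-\tilde\theta)$ since $(\rho|u|^2)_t+\div(\rho|u|^2u)=2\rho\dot u\cdot u$), absorb the $P\div u\,|u|^2$ cross term into the $\lambda/2$ slack via Young, and close with the Lorentz estimates already derived for $\int\rho|\theta-\tilde\theta|^2|u|^2$ in Lemma \ref{lem-u-L^4}. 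No essential difference from the paper's argument.
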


\begin{proof}
  Multiplying $\eqref{FCNS-eq}_3$ by $|u|^2$ and integrating the resultant equation over $\Omega$, we have
  \begin{equation}\label{divu-L2-estimate1}
    \begin{aligned}
    &\quad 2\mu\int|\mathcal{D}(u)|^2|u|^2+\lambda\int|\div u|^2|u|^2\\
    &=\int c_v\rho|u|^2\theta_t+\int c_v\rho|u|^2u\cdot\nabla\theta+\int R\rho\theta\div u|u|^2-\kappa\int\Delta\theta|u|^2\\
    &=\frac{d}{dt}\int c_v\rho|u|^2(\theta-\tilde{\theta})-\int c_v\left((\rho|u|^2)_t+\div(\rho|u|^2u)\right)(\theta-\tilde{\theta})\\
    &\qquad+\int R\rho\theta\div u|u|^2+2\kappa\int\nabla\theta|u|\nabla|u|.
    \end{aligned}
  \end{equation}
  Employing the discussions used in \eqref{L^4-estimate-3-1} and \eqref{L^4-estimate-3-2}, we have
  \begin{equation}\label{divu-L2-estimate2}
    \begin{aligned}
    \int R\rho\theta\div u|u|^2&=\int R\rho(\theta-\tilde{\theta})\div u|u|^2+\tilde{\theta}\int R\rho\div u|u|^2\\
    &\leq\epsilon\int|\div u|^2|u|^2+C_{\epsilon}\int\rho^2|\theta-\tilde{\theta}|^2|u|^2+C_{\epsilon}\int\rho^2|u|^2\\
    &\leq C_{\epsilon}\left(\norm{\theta-\tilde{\theta}}_{L^{r,\infty}}^s+1\right)\left(\norm{\sqrt{\rho}|u|^2}_{L^2}^2+\norm{\sqrt{\rho}(\theta-\tilde{\theta})}_{L^2}^2+\norm{\sqrt{\rho}u}_{L^2}^2\right)\\
    &\qquad+\epsilon\int|\div u|^2|u|^2+\epsilon\int|\nabla |u||^2|u|^2+\epsilon\norm{\nabla\theta}_{L^2}^2,
    \end{aligned}
  \end{equation}
  where $\epsilon>0$ is sufficient small.
  It is easy to check that
  \begin{equation}\label{divu-L2-estimate3}
    2\kappa\int\nabla\theta|u|\nabla|u|\leq\epsilon\int|u|^2|\nabla |u||^2+C_{\epsilon}\int|\nabla\theta|^2.
  \end{equation}
  Finally, due to $\eqref{FCNS-eq}_1$, we have
  \begin{equation*}
    (\rho|u|^2)_t+\div(\rho|u|^2u)=\rho(|u|^2)_t+\rho u\cdot\nabla|u|^2=2\rho\dot{u}\cdot u.
  \end{equation*}
  Then, by virtue of \eqref{L^4-estimate-3-1} and \eqref{L^4-estimate-3-2} in Lemma \ref{lem-u-L^4}, we have
  \begin{equation}\label{divu-L2-estimate4}
    \begin{aligned}
    &\quad\int c_v[(\rho|u|^2)_t+\div(\rho|u|^2u)](\theta-\tilde{\theta})\\
    &\leq\epsilon\int\rho|u_t|^2+\epsilon\int|u|^2|\nabla u|^2+C_{\epsilon}\int\rho|u|^2(\theta-\tilde{\theta})^2\\
    &\leq\epsilon\int\rho|u_t|^2+\epsilon\int|u|^2|\nabla u|^2+\epsilon\int|\nabla \theta|^2\\
    &\qquad+C_{\epsilon}\left(\norm{\theta-\tilde{\theta}}_{L^{r,\infty}}^s+1\right)\left(\norm{\sqrt{\rho}|u|^2}_{L^2}^2
    +\norm{\sqrt{\rho}(\theta-\tilde{\theta})}_{L^2}^2\right).
    \end{aligned}
  \end{equation}
  Plugging \eqref{divu-L2-estimate2}, \eqref{divu-L2-estimate3} and \eqref{divu-L2-estimate4} into \eqref{divu-L2-estimate1}, we get \eqref{divu-L^2-estimate}. Therefore,
  we have completed the proof of Lemma \ref{lem-divu-L^2}.
\end{proof}
\begin{lem}\label{lem-u-L^4-close}
Under the assumptions of Theorem \ref{thm-blowup-FCNS} and \eqref{assumption1}, the following estimates hold depending on the sign of the bulk viscosity coefficient $\lambda$.
\begin{itemize}
  \item If $\lambda>0$, it holds that
  \begin{equation}\label{u-L^4-close1}
  \begin{aligned}
    &\quad\frac{d}{dt}\int[\rho|u|^4-\frac{4Cc_v}{\lambda}\rho|u|^2(\theta-\tilde{\theta})]+\frac{\mu}{2}\int|u|^2|\nabla u|^2\\
    &\leq C_{\epsilon}\left(\norm{\theta-\tilde{\theta}}_{L^{r,\infty}}^s+1\right)\left(\norm{\sqrt{\rho}|u|^2}_{L^2}^2
  +\norm{\sqrt{\rho}(\theta-\tilde{\theta})}_{L^2}^2+\norm{\sqrt{\rho}u}_{L^2}^2\right)\\
  &\qquad+C\epsilon\int\rho|u_t|^2+C\epsilon\norm{\nabla\theta}_{L^2}^2,
  \end{aligned}
  \end{equation}
  for sufficient small $\epsilon>0$. In particular, the term $\displaystyle\int\rho|u_t|^2$ in \eqref{u-L^4-close1} can also be replaced by $\displaystyle\int\rho|\dot{u}|^2$.
  \item If $\lambda\leq 0$, it holds that
  \begin{equation}\label{u-L^4-close2}
  \begin{aligned}
    &\quad\frac{d}{dt}\int\rho|u|^4+2\mu\int|u|^2|\nabla u|^2\\
    &\leq C_{\epsilon}\left(\norm{\theta-\tilde{\theta}}_{L^{r,\infty}}^s+1\right)\left(\norm{\sqrt{\rho}|u|^2}_{L^2}^2
  +\norm{\sqrt{\rho}(\theta-\tilde{\theta})}_{L^2}^2+\norm{\sqrt{\rho}u}_{L^2}^2\right)+\epsilon\norm{\nabla\theta}_{L^2}^2,
  \end{aligned}
  \end{equation}
  for sufficient small $\epsilon>0$. In particular, the term $\displaystyle\int\rho|u_t|^2$ in \eqref{u-L^4-close2} can also be replaced by $\displaystyle\int\rho|\dot{u}|^2$.
\end{itemize}
\end{lem}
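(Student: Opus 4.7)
The strategy is to combine Lemma \ref{lem-u-L^4} with Lemma \ref{lem-divu-L^2} so as to eliminate the residual term $C\int|u|^2|\div u|^2$ appearing on the right-hand side of \eqref{u-L^4-estimate}. How this combination is carried out depends crucially on the sign of $\lambda$, since the coercive contribution $\frac{\lambda}{2}\int|u|^2|\div u|^2$ on the left of \eqref{divu-L^2-estimate} is useful only when $\lambda>0$.

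For the subcase $\lambda>0$, I would multiply \eqref{divu-L^2-estimate} by $\frac{4C}{\lambda}$, where $C$ is the constant in front of $\int|u|^2|\div u|^2$ in \eqref{u-L^4-estimate}, and add the result to \eqref{u-L^4-estimate}. After transferring the $\frac{d}{dt}\int c_v\rho|u|^2(\theta-\tilde\theta)$-term to the left, the $C\int|u|^2|\div u|^2$ on the right of \eqref{u-L^4-estimate} is absorbed by $2C\int|u|^2|\div u|^2$ coming from the left of \eqref{divu-L^2-estimate}, and the non-negative contribution $\frac{8\mu C}{\lambda}\int|\mathcal{D}(u)|^2|u|^2$ may simply be discarded. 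Choosing the Young's parameter $\epsilon$ in \eqref{divu-L^2-estimate} small enough that $\frac{4C\epsilon}{\lambda}<\mu/2$ then allows the remaining $\frac{4C\epsilon}{\lambda}\int|u|^2|\nabla u|^2$-term on the right to be absorbed into $\mu\int|u|^2|\nabla u|^2$ from \eqref{u-L^4-estimate}, leaving exactly $\frac{\mu}{2}\int|u|^2|\nabla u|^2$ on the left, which yields \eqref{u-L^4-close1}.

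For the subcase $\lambda\leq 0$, the identity \eqref{divu-L^2-estimate} is of no use, so I would revisit the identity \eqref{L^4-estimate-1} directly. The key observation is that the viscosity restriction \eqref{viscosity-condition} yields $\lambda+\mu\geq\mu/3>0$, hence the term $4(\lambda+\mu)\int|u|^2|\div u|^2$ on the left of \eqref{L^4-estimate-1} is coercive. Estimating the cross-term $-8(\lambda+\mu)\int\div u\,|u|\,u\cdot\nabla|u|$ by $4(\lambda+\mu)\int|u|^2|\div u|^2+4(\lambda+\mu)\int|u|^2|\nabla|u||^2$ and treating $4\int P\div(|u|^2u)$ by Young's inequality with sufficiently small weights on the quadratic gradient terms, both the $|u|^2|\div u|^2$ and $|u|^2|\nabla|u||^2$ contributions on the right are absorbed into the left. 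Combined with the elementary $|\div u|^2\leq 3|\nabla u|^2$, which absorbs a last small $\delta\int|u|^2|\div u|^2$ remainder into $|u|^2|\nabla u|^2$, one ends up with
\begin{equation*}
\frac{d}{dt}\int\rho|u|^4+2\mu\int|u|^2|\nabla u|^2\leq C\int P^2|u|^2.
\end{equation*}
Splitting $P^2\lesssim\rho^2|\theta-\tilde\theta|^2+\rho^2\tilde\theta^2$ and bounding the first piece exactly as in \eqref{L^4-estimate-3-1}--\eqref{L^4-estimate-3-2} and the second by $\tilde\theta^2\int\rho^2|u|^2\lesssim\|\sqrt\rho u\|_{L^2}^2$ via \eqref{assumption1} then produces \eqref{u-L^4-close2}.

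The substitution $\int\rho|u_t|^2\mapsto\int\rho|\dot u|^2$ follows from $|\dot u|^2\leq 2|u_t|^2+2|u|^2|\nabla u|^2$ together with $\rho\leq M^*$; the extra $2\epsilon M^*\int|u|^2|\nabla u|^2$ is absorbed into the coercive $|u|^2|\nabla u|^2$-term on the left for $\epsilon$ small. The main obstacle throughout is not estimation but bookkeeping: one must ensure that the Young's weights stay small compared to $\frac{\mu}{2}$ and, in the case $\lambda\leq 0$, small compared to $4(\lambda+\mu)$, which may be as small as $\frac{4\mu}{3}$ but remains strictly positive thanks to \eqref{viscosity-condition}.
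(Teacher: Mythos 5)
Your proposal is correct and follows essentially the same two-pronged route as the paper: for $\lambda>0$, combine \eqref{u-L^4-estimate} with \eqref{divu-L^2-estimate} multiplied by $\frac{4C}{\lambda}$ so that $\frac{\lambda}{2}\int|u|^2|\div u|^2$ absorbs the stray $C\int|u|^2|\div u|^2$; for $\lambda\leq 0$, revisit \eqref{L^4-estimate-1} directly, bound $-8(\lambda+\mu)\int\div u\,|u|u\cdot\nabla|u|$ by $4(\lambda+\mu)\int|u|^2(|\div u|^2+|\nabla|u||^2)$, and use $\lambda+\mu>0$ to keep the left side coercive. The only tiny inaccuracy is in the replacement remark: to swap $\int\rho|u_t|^2$ for $\int\rho|\dot u|^2$ on the right of \eqref{u-L^4-close1} one needs $|u_t|^2\leq 2|\dot u|^2+2|u|^2|\nabla u|^2$ (not the direction you wrote), but the symmetric inequality works identically and the conclusion stands.
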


\begin{proof}
  Combining \eqref{u-L^4-estimate} with \eqref{divu-L^2-estimate} and choosing $\epsilon>0$ small enough,  we can get the estimate \eqref{u-L^4-close1}.
  To prove the second estimate \eqref{u-L^4-close2}, due to \eqref{viscosity-condition}, it holds that $\lambda+\mu>0$. Then, we can modify a term in \eqref{L^4-estimate-1}
  into
  \begin{equation*}
    8(\lambda+\mu)\int\div u|u|u\cdot\nabla|u|\leq 4(\lambda+\mu)\int|u|^2|\div u|^2+4(\lambda+\mu)\int|u|^2|\nabla|u||^2,
  \end{equation*}
which implies
  \begin{equation*}
  \frac{d}{dt}\int\rho|u|^4+\int4|u|^2(\mu|\nabla u|^2+(\mu-\lambda)|\nabla|u||^2)\leq 4\int P\div(|u|^2u).
  \end{equation*}
  Then, by employing similar arguments used in proving Lemma \ref{lem-u-L^4}, we obtain \eqref{u-L^4-close2}.

  Therefore, we have completed the proof of Lemma \ref{lem-u-L^4-close}.
\end{proof}

\begin{lem}\label{lem-theta-rho-L^2}
Under the assumptions of Theorem \ref{thm-blowup-FCNS} and \eqref{assumption1}, for any small $\epsilon>0$, it holds that
\begin{equation}\label{theta-rho-L^2-estimate}
  \begin{aligned}
  &\quad\frac{d}{dt}\int( c_v\rho|\theta-\tilde{\theta}|^2+|\rho-\tilde{\rho}|^2)+\kappa\int|\nabla\theta|^2\\
  &\leq C_{\epsilon}\left(1+\norm{\theta-\tilde{\theta}}_{L^{r,\infty}}^s\right)\left(\norm{\nabla u}_{L^2}^2+\norm{\sqrt{\rho}(\theta-\tilde{\theta})}_{L^2}^2+\norm{\rho-\tilde{\rho}}_{L^2}^2+1\right)
  +\epsilon\norm{\sqrt{\rho}\dot{u}}_{L^2}^2.
\end{aligned}
\end{equation}
\end{lem}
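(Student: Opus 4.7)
The plan is to derive two evolution identities---one for $\int\rho|\theta-\tilde\theta|^2$ from the temperature equation and one for $\int(\rho-\tilde\rho)^2$ from a renormalized continuity equation---and then bound every source term either by the quantities appearing on the right-hand side of \eqref{theta-rho-L^2-estimate} or by a small multiple of the dissipative quantities on the left. First I would rewrite $\eqref{FCNS-eq}_3$ as $c_v\rho\dot\theta-\kappa\Delta\theta+P\div u=2\mu|\mathcal{D}(u)|^2+\lambda(\div u)^2$ by using the continuity equation, multiply by $\theta-\tilde\theta$, integrate over $\Omega$, and use $\partial_n\theta|_{\partial\Omega}=0$ plus $u\cdot n|_{\partial\Omega}=0$ (or decay at infinity) to kill boundary contributions, yielding
\[
\frac{c_v}{2}\frac{d}{dt}\int\rho|\theta-\tilde\theta|^2+\kappa\int|\nabla\theta|^2=\int\bigl(2\mu|\mathcal{D}(u)|^2+\lambda(\div u)^2\bigr)(\theta-\tilde\theta)-\int P\div u\,(\theta-\tilde\theta).
\]
Testing the rewritten continuity equation $\partial_t(\rho-\tilde\rho)+\div((\rho-\tilde\rho)u)+\tilde\rho\div u=0$ against $\rho-\tilde\rho$ produces
\[
\frac{1}{2}\frac{d}{dt}\int(\rho-\tilde\rho)^2=-\frac{1}{2}\int(\rho-\tilde\rho)^2\div u-\tilde\rho\int(\rho-\tilde\rho)\div u,
\]
whose right-hand side is trivially controlled via H\"older by $C\|\rho\|_{L^\infty}(\|\rho-\tilde\rho\|_{L^2}^2+\|\nabla u\|_{L^2}^2)$, already of the admissible shape.

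For the pressure-divergence term I would split $P\div u\,(\theta-\tilde\theta)=R\rho(\theta-\tilde\theta)^2\div u+R\tilde\theta\rho(\theta-\tilde\theta)\div u$. The second contribution is bounded directly by $\|\sqrt\rho(\theta-\tilde\theta)\|_{L^2}\|\nabla u\|_{L^2}$ using $\|\rho\|_{L^\infty}\le M^\ast$. For the first piece I would replay the Lorentz-space interpolation engineering of \eqref{L^4-estimate-3-2}: H\"older in $L^{r,\infty}\times L^{\frac{2r}{r-1},2}\times L^2$ when $r<\infty$ (or a plain $L^\infty\times L^2\times L^2$ split when $r=\infty$), followed by the interpolation \eqref{Lorentz-interpolation} between two Lebesgue exponents flanking $r$, Gagliardo--Nirenberg to bring in $\|\nabla\theta\|_{L^2}$, and Young's inequality. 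This delivers the desired $C_\epsilon(1+\|\theta-\tilde\theta\|_{L^{r,\infty}}^s)\|\sqrt\rho(\theta-\tilde\theta)\|_{L^2}^2+\epsilon\|\nabla\theta\|_{L^2}^2+\epsilon\|\nabla u\|_{L^2}^2$.

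The main obstacle is the viscous-heating term $\int(2\mu|\mathcal{D}(u)|^2+\lambda(\div u)^2)(\theta-\tilde\theta)$, morally $\int|\nabla u|^2|\theta-\tilde\theta|$; a naive H\"older estimate would produce an uncontrollable power of $\|\nabla u\|_{L^6}$, so we must pay the gradient with material derivative via the momentum equation. The strategy is to invoke the identity $(2\mu+\lambda)\div u=G+P-P(\tilde\rho,\tilde\theta)$ together with the Hodge-type decomposition $\|\nabla u\|_{L^p}\lesssim\|\div u\|_{L^p}+\|\omega\|_{L^p}$ from \eqref{Hodge-decomposition1}--\eqref{Hodge-decomposition2}, and then to use the elliptic estimates \eqref{GW-estimate1}--\eqref{GW-estimate2} which convert $\|\nabla G\|_{L^2}+\|\nabla\omega\|_{L^2}$ into $\|\rho\dot u\|_{L^2}\le\|\sqrt\rho\|_{L^\infty}\|\sqrt\rho\dot u\|_{L^2}$. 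Concretely, H\"older in Lorentz spaces bounds the heating integral by $\|\theta-\tilde\theta\|_{L^{r,\infty}}\|\nabla u\|_{L^{2,2}}\|\nabla u\|_{L^{\frac{2r}{r-2},2}}$; interpolation \eqref{Lorentz-interpolation} expresses $\|\nabla u\|_{L^{\frac{2r}{r-2},2}}$ between $\|\nabla u\|_{L^2}$ and $\|\nabla u\|_{L^6}$, the latter being controlled by $\|\sqrt\rho\dot u\|_{L^2}+\|\nabla P\|_{L^2}\lesssim\|\sqrt\rho\dot u\|_{L^2}+\|\nabla\theta\|_{L^2}+\|\nabla\rho\|_{L^2}$ via $G$ and $\omega$. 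Young's inequality then distributes the outcome as $C_\epsilon(1+\|\theta-\tilde\theta\|_{L^{r,\infty}}^s)(\|\nabla u\|_{L^2}^2+\|\rho-\tilde\rho\|_{L^2}^2+1)+\epsilon(\|\nabla\theta\|_{L^2}^2+\|\sqrt\rho\dot u\|_{L^2}^2)$, which is exactly the admissible shape.

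Summing the two identities, plugging in all the bounds, and choosing $\epsilon$ small enough so that $\epsilon\|\nabla\theta\|_{L^2}^2$ can be absorbed into $\kappa\int|\nabla\theta|^2$ on the left, we reach \eqref{theta-rho-L^2-estimate}; the surviving $\epsilon\|\sqrt\rho\dot u\|_{L^2}^2$ on the right is kept for later closure against an independent estimate on the material acceleration. The $+1$ on the right-hand side simply collects constant-in-space contributions stemming from the non-zero far-field constants $\tilde\rho$, $\tilde\theta$ on a bounded $\Omega$ (and is absent otherwise).
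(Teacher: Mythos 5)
Your overall strategy is the paper's: test $\eqref{FCNS-eq}_3$ against $\theta-\tilde\theta$, test the renormalized continuity equation against $\rho-\tilde\rho$, control $\|\nabla u\|_{L^6}$ through the effective viscous flux / decomposition $u=g+h$, and redistribute with Lorentz--H\"older, interpolation and Young. However, two of your concrete steps do not survive the full range $r\in(\tfrac32,\infty]$ that the lemma requires, and one of them introduces an uncontrolled quantity.

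First, the exponent arithmetic in both Lorentz--H\"older splits is off. For the heating integral you propose $\|\theta-\tilde\theta\|_{L^{r,\infty}}\|\nabla u\|_{L^{2,2}}\|\nabla u\|_{L^{\frac{2r}{r-2},2}}$; this is a valid triple only when $\tfrac{2r}{r-2}\ge 2$ (i.e.\ $r\ge 2$), and interpolating $L^{\frac{2r}{r-2},2}$ between $L^2$ and $L^6$ requires $\tfrac{r-2}{2r}\ge\tfrac16$, i.e.\ $r\ge 3$. For $\rho(\theta-\tilde\theta)^2\div u$ you wrote $L^{r,\infty}\times L^{\frac{2r}{r-1},2}\times L^2$, whose reciprocals sum to $\tfrac{2r+1}{2r}>1$, so it is not a H\"older triple at all; replacing $\tfrac{2r}{r-1}$ by $\tfrac{2r}{r-2}$ to fix the sum again forces $r\ge 3$. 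The paper avoids both problems: it symmetrically places the \emph{two} factors of $|\nabla u|$ in $L^{\frac{2r}{r-1},2}$ (see \eqref{theta-L^2-estimate3-2}), and for the pressure term it first applies Young's inequality $\rho(\theta-\tilde\theta)^2|\div u|\lesssim\rho|\theta-\tilde\theta|^3+\rho|\theta-\tilde\theta||\div u|^2$, handling $\int\rho|\theta-\tilde\theta|^3$ by the same symmetric $L^{\frac{2r}{r-1},2}$ pairing (see \eqref{theta-L^3-estimate2}) and merging the second piece into the heating term. Both choices interpolate between $L^2$ and $L^6$ as long as $\tfrac{r-1}{2r}\ge\tfrac16$, i.e.\ precisely $r\ge\tfrac32$, which is the range claimed.

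Second, your bound $\|\nabla u\|_{L^6}\lesssim\|\sqrt\rho\dot u\|_{L^2}+\|\nabla P\|_{L^2}\lesssim\|\sqrt\rho\dot u\|_{L^2}+\|\nabla\theta\|_{L^2}+\|\nabla\rho\|_{L^2}$ brings $\|\nabla\rho\|_{L^2}$ onto the right-hand side, a quantity not controlled by \eqref{assumption1} and not appearing in \eqref{theta-rho-L^2-estimate}; this would make the Gr\"onwall closure impossible. The paper's route (see \eqref{gradient-u-L^6-1}--\eqref{gradient-u-L^6-2}) instead bounds $\|\nabla u\|_{L^6}\lesssim\|G\|_{L^6}+\|\omega\|_{L^6}+\|P-P(\tilde\rho,\tilde\theta)\|_{L^6}$, then uses Sobolev plus $\nabla G,\nabla\omega\lesssim\rho\dot u$ in $L^2$ and bounds $\|P-P(\tilde\rho,\tilde\theta)\|_{L^6}\lesssim\|\theta-\tilde\theta\|_{L^6}+\|\rho-\tilde\rho\|_{L^6}$ \emph{without} differentiating $P$, the latter controlled by $\|\nabla\theta\|_{L^2}+\|\rho-\tilde\rho\|_{L^2}+1$ using the uniform density bound. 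So the gradient of $\rho$ never enters. Apart from these two corrections your argument would reproduce the paper's proof.
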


\begin{proof}
  Multiplying $\eqref{FCNS-eq}_3$ by $\theta-\tilde{\theta}$, and then integrating the resultant equation over $\Omega$, we have from integration by parts that
  \begin{equation}\label{theta-L^2-estimate1}
    \begin{aligned}
    &\quad\frac{1}{2}\frac{d}{dt}\int c_v\rho|\theta-\tilde{\theta}|^2+\kappa\int|\nabla\theta|^2\\
    &=-\int\rho(\theta-\tilde{\theta})^2\div u-\tilde{\theta}\int\rho(\theta-\tilde{\theta})\div u
    +2\mu\int|\mathcal{D}(u)|^2(\theta-\tilde{\theta})+\lambda\int(\div u)^2(\theta-\tilde{\theta})\\
    &\leq C\int\rho|\theta-\tilde{\theta}|^2+C\int\rho|\theta-\tilde{\theta}|^3+C\norm{\nabla u}_{L^2}^2+C\int|\theta-\tilde{\theta}||\nabla u|^2.
    \end{aligned}
  \end{equation}
  Employing similar arguments used in deriving \eqref{L^4-estimate-3-1} and \eqref{L^4-estimate-3-2}, we obtain
  \begin{itemize}
    \item If $r=\infty$, it holds that
    \begin{equation}\label{theta-L^2-estimate3-1}
      \int|\theta-\tilde{\theta}||\nabla u|^2\lesssim(1+\norm{\theta-\tilde{\theta}}_{L^{\infty}}^s)\norm{\nabla u}_{L^2}^2.
    \end{equation}
    \item If $\frac{3}{2}<r<\infty$, due to \eqref{Lorentz-Holder} and \eqref{Lorentz-interpolation}, we have
        \begin{equation}\label{theta-L^2-estimate3-2}
          \begin{aligned}
          \int|\theta-\tilde{\theta}||\nabla u|^2&\lesssim\norm{\theta-\tilde{\theta}}_{L^{r,\infty}}\norm{\nabla u}_{L^{\frac{2r}{r-1},2}}^2\\
          &\lesssim\norm{\theta-\tilde{\theta}}_{L^{r,\infty}}\norm{\nabla u}_{L^{\frac{2r_1}{r_1-1}}}\norm{\nabla u}_{L^{\frac{2r_2}{r_2-1}}}\\
          &\lesssim\norm{\theta-\tilde{\theta}}_{L^{r,\infty}}\norm{\nabla u}_{L^2}^{1-\frac{3}{2r_1}}\norm{\nabla u}_{L^6}^{\frac{3}{2r_1}}\norm{\nabla u}_{L^2}^{1-\frac{3}{2r_2}}\norm{\nabla u}_{L^6}^{\frac{3}{2r_2}}\\
          &\lesssim\norm{\theta-\tilde{\theta}}_{L^{r,\infty}}\norm{\nabla u}_{L^2}^{2-\frac{3}{r}}\norm{\nabla u}_{L^6}^{\frac{3}{r}},
          \end{aligned}
        \end{equation}
        for any $\frac{3}{2}<r_1<r<r_2<\infty$ with $\frac{2}{r}=\frac{1}{r_1}+\frac{1}{r_2}$.
        Here, we divide into two cases to deal with the term $\norm{\nabla u}_{L^6}$:
        \begin{enumerate}[(1)]
          \item For $\Omega=\mathbb{R}^3$ or bounded doman $\Omega$ with Navier-slip boundary condition \eqref{Navier-slip-condition}, by the effective viscous flux $G$, we have from \eqref{Hodge-decomposition1}, \eqref{Hodge-decomposition2}, \eqref{GW-estimate1} and \eqref{GW-estimate2} that
              \begin{equation}\label{gradient-u-L^6-1}
                \begin{aligned}
                \norm{\nabla u}_{L^6}&\lesssim\norm{G}_{L^6}+\norm{w}_{L^6}+\norm{\rho\theta-\tilde{\rho}\tilde{\theta}}_{L^6}\\
                &\lesssim\norm{\nabla G}_{L^2}+\norm{\nabla w}_{L^2}+\norm{\nabla u}_{L^2}+\norm{\rho\theta-\tilde{\rho}\tilde{\theta}}_{L^2}+\norm{\rho\theta-\tilde{\rho}\tilde{\theta}}_{L^6}\\
                &\lesssim\norm{\rho\dot{u}}_{L^2}+\norm{\nabla u}_{L^2}+\norm{\rho\theta-\tilde{\rho}\tilde{\theta}}_{L^6}\\
                &\lesssim\norm{\rho\dot{u}}_{L^2}+\norm{\nabla u}_{L^2}+\norm{\theta-\tilde{\theta}}_{L^6}+\norm{\rho-\tilde{\rho}}_{L^6}\\
                &\lesssim\norm{\sqrt{\rho}\dot{u}}_{L^2}+\norm{\nabla u}_{L^2}+\norm{\sqrt{\rho}(\theta-\tilde{\theta})}_{L^2}+\norm{\nabla\theta}_{L^2}+\norm{\rho-\tilde{\rho}}_{L^2}+1.
                \end{aligned}
              \end{equation}
              Here $\norm{\rho\theta-\tilde{\rho}\tilde{\theta}}_{L^2}$ only appears in the case of bounded $\Omega$ and thus can be absorbed by $\norm{\rho\theta-\tilde{\rho}\tilde{\theta}}_{L^6}$. In the last inequality, we have used \eqref{L2-control-lem} for the bounded domain case.
          \item If $\Omega$ is bounded with Dirichlet boundary condition \eqref{Dirichlet-condition}, by the decomposition $u=h+g$ and the estimate \eqref{h-g-estimate-Dirichlet}, we get
              \begin{equation}\label{gradient-u-L^6-2}
                \begin{aligned}
                \norm{\nabla u}_{L^6}&\leq\norm{\nabla h}_{L^6}+\norm{\nabla g}_{L^6}\lesssim\norm{\rho\theta-\tilde{\rho}\tilde{\theta}}_{L^6}+\norm{\nabla g}_{H^1}\\
                &\lesssim\norm{\theta-\tilde{\theta}}_{L^6}+\norm{\rho-\tilde{\rho}}_{L^6}+\norm{\rho\dot{u}}_{L^2}\\
                &\lesssim\norm{\sqrt{\rho}(\theta-\tilde{\theta})}_{L^2}+\norm{\nabla\theta}_{L^2}+\norm{\rho-\tilde{\rho}}_{L^2}+1+\norm{\sqrt{\rho}\dot{u}}_{L^2},
                \end{aligned}
              \end{equation}
              where we have used \eqref{L2-control-lem}.
        \end{enumerate}
        Plugging \eqref{gradient-u-L^6-1} and \eqref{gradient-u-L^6-2} into \eqref{theta-L^2-estimate3-2}, we obtain
        \begin{equation}\label{theta-L^2-estimate3-3}
          \begin{aligned}
          C\int|\theta-\tilde{\theta}||\nabla u|^2&\leq C_{\epsilon}\left(1+\norm{\theta-\tilde{\theta}}_{L^{r,\infty}}^s\right)\left(\norm{\nabla u}_{L^2}^2+\norm{\rho-\tilde{\rho}}_{L^2}^2+\norm{\sqrt{\rho}(\theta-\tilde{\theta})}_{L^2}^2+1\right)\\
          &\quad+\epsilon\left(\norm{\sqrt{\rho}\dot{u}}_{L^2}^2+\norm{\nabla\theta}_{L^2}^2\right).
          \end{aligned}
        \end{equation}
  \end{itemize}

  Similarly, for the term $\int\rho|\theta-\tilde{\theta}|^3$ we have
  \begin{itemize}
    \item If $r=\infty$,
    \begin{equation}\label{theta-L^3-estimate1}
      \int\rho|\theta-\tilde{\theta}|^3\leq(1+\norm{\theta-\tilde{\theta}}_{L^{\infty}}^s)\int\rho|\theta-\tilde{\theta}|^2.
    \end{equation}
    \item If $\frac{3}{2}<r<\infty$,
    \begin{equation}\label{theta-L^3-estimate2}
    \begin{aligned}
      \int\rho|\theta-\tilde{\theta}|^3
      &\lesssim\norm{\theta-\tilde{\theta}}_{L^{r,\infty}}\norm{\sqrt{\rho}|\theta-\tilde{\theta}|}_{L^2}^{2-\frac{3}{r}}\norm{\sqrt{\rho}|\theta-\tilde{\theta}|}_{L^6}^{\frac{3}{r}}\\
      &\lesssim\norm{\theta-\tilde{\theta}}_{L^{r,\infty}}\norm{\sqrt{\rho}|\theta-\tilde{\theta}|}_{L^2}^{2-\frac{3}{r}}(\norm{\nabla\theta}_{L^2}+\norm{\sqrt{\rho}|\theta-\tilde{\theta}|}_{L^2})^{\frac{3}{r}}\\
      &\leq C_{\epsilon}(1+\norm{\theta-\tilde{\theta}}_{L^{r,\infty}}^s)\norm{\sqrt{\rho}|\theta-\tilde{\theta}|}_{L^2}^2+\epsilon\norm{\nabla\theta}_{L^2}^2.
    \end{aligned}
    \end{equation}
  \end{itemize}
  Thus we get
  \begin{equation}\label{theta-L^3-estimate3}
    \int\rho|\theta-\tilde{\theta}|^3\leq C_{\epsilon}(1+\norm{\theta-\tilde{\theta}}_{L^{r,\infty}}^s)\norm{\sqrt{\rho}|\theta-\tilde{\theta}|}_{L^2}^2+\epsilon\norm{\nabla\theta}_{L^2}^2.
  \end{equation}

  Combining all estimates above and choosing $\epsilon>0$ sufficiently small, we conclude that
  \begin{equation}\label{theta-L^2-estimate4}
    \begin{aligned}
    &\quad\frac{d}{dt}\int c_v\rho|\theta-\tilde{\theta}|^2+\kappa\int|\nabla\theta|^2\\
    &\leq C_{\epsilon}\left(1+\norm{\theta-\tilde{\theta}}_{L^{r,\infty}}^s\right)\left(\norm{\nabla u}_{L^2}^2+\norm{\rho-\tilde{\rho}}_{L^2}^2+\norm{\sqrt{\rho}(\theta-\tilde{\theta})}_{L^2}^2+1\right)
    +\epsilon\norm{\sqrt{\rho}\dot{u}}_{L^2}^2 .
    \end{aligned}
  \end{equation}

  Next, we derive the estimate on $\norm{\rho-\tilde{\rho}}_{L^2}$. The term $\norm{\rho-\tilde{\rho}}_{L^2}$\footnote{If $\Omega$ is bounded, then this term is obviously bounded due to the mass conservation $\rho\in L^1$ and the assumption \eqref{assumption1}.} is obviously bounded if $\Omega$ is bounded. Therefore, we only need to consider the case $\Omega=\mathbb{R}^3$. To do this, we first notice that
  \begin{equation}\label{rho-L^2-eq}
    [(\rho-\tilde{\rho})^2]_t+\div[(\rho-\tilde{\rho})^2u]+(\rho-\tilde{\rho})^2\div u+2\tilde{\rho}(\rho-\tilde{\rho})\div u=0.
  \end{equation}
  Integrating the above equation over $\Omega$ and using \eqref{assumption1}, we have
  \begin{equation}\label{rho-L^2-estimate}
    \begin{aligned}
    \frac{d}{dt}\int|\rho-\tilde{\rho}|^2&\lesssim\norm{\nabla u}_{L^2}^2+\norm{\rho-\tilde{\rho}}_{L^2}^2.
    \end{aligned}
  \end{equation}
  Combining \eqref{theta-L^2-estimate4} and \eqref{rho-L^2-estimate}, we finish the proof of Lemma \ref{lem-theta-rho-L^2}.
\end{proof}

\subsection{Estimates on 1-order energy.}
In this subsection, we devote ourselves to deriving the estimates on 1-order derivative of velocity $u$.
\begin{lem}\label{lem-u-1rd-Dirichlet}
  Let $\Omega$ be bounded with Dirichlet boundary condition \eqref{Dirichlet-condition}. Under the assumptions of Theorem \ref{thm-blowup-FCNS} and \eqref{assumption1}, it holds that
  \begin{equation}\label{u-1rd-estimate-Dirichlet}
    \begin{aligned}
    &\quad\frac{d}{dt}\int[\mu(|\nabla u|^2+|\nabla h|^2)-2R(\rho\theta-\tilde{\rho}\tilde{\theta})\div u+(\mu+\lambda)(|\div u|^2+|\div h|^2)]+\int\rho|u_t|^2\\
    &\leq C\left(\norm{\theta-\tilde{\theta}}_{L^{r,\infty}}^s+1\right)\left(\norm{\sqrt{\rho}|u|^2}_{L^2}^2
    +\norm{\sqrt{\rho}(\theta-\tilde{\theta})}_{L^2}^2+\norm{\rho-\tilde{\rho}}_{L^2}^2+\norm{\nabla u}_{L^2}^2+1\right)\\
    &\quad+C\norm{\nabla\theta}_{L^2}^2+C\norm{u|\nabla u|}_{L^2}^2.
    \end{aligned}
  \end{equation}
\end{lem}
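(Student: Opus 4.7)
The plan is to multiply the momentum equation by $2u_t$, integrate over $\Omega$, and exploit the Dirichlet boundary condition $u_t|_{\partial\Omega}=0$ to perform the standard integration by parts. The viscous terms produce $\mu\frac{d}{dt}\int|\nabla u|^2+(\mu+\lambda)\frac{d}{dt}\int|\div u|^2$, while the convective term gives $2\int\rho u_t\cdot(u\cdot\nabla u)$. For the pressure term, I would use $2\int\nabla P\cdot u_t=-2\int P\div u_t=-2\frac{d}{dt}\int P\div u+2\int P_t\div u$; since $\int\div u=0$ under Dirichlet BC, $\int P\div u=R\int(\rho\theta-\tilde\rho\tilde\theta)\div u$. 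This yields the intermediate identity
\[
2\!\int\!\rho|u_t|^2+\frac{d}{dt}\!\int\![\mu|\nabla u|^2+(\mu+\lambda)|\div u|^2-2R(\rho\theta-\tilde\rho\tilde\theta)\div u]=-2\!\int\!\rho u_t(u\cdot\nabla u)-2\!\int\!P_t\div u.
\]

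Next, to produce the extra $|\nabla h|^2+|\div h|^2$ terms on the LHS, I would introduce the decomposition $u=h+g$ from \eqref{h-pressure-eq}--\eqref{g-material-eq}. Then $\div u=\div h+\div g$, so $-2\int P_t\div u=-2\int P_t\div h-2\int P_t\div g$. The key identity is $\int P_t\div h=\frac{1}{2}\frac{d}{dt}\int[\mu|\nabla h|^2+(\mu+\lambda)|\div h|^2]$, obtained by multiplying $Lh=\nabla P$ by $h$ to get $\int P\div h=\mu\|\nabla h\|_{L^2}^2+(\mu+\lambda)\|\div h\|_{L^2}^2$, then using $\frac{d}{dt}\int P\div h=\int P_t\div h+\int P\div h_t$ together with the symmetry $\int P_t\div h=\int P\div h_t$ (which follows from $\int Lh_t\cdot h=\int h_t\cdot Lh$, $u|_{\partial\Omega}=h_t|_{\partial\Omega}=0$). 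Moving $2\int P_t\div h$ to the LHS assembles exactly the quantity appearing in the lemma, leaving
\[
2\!\int\!\rho|u_t|^2+\frac{d}{dt}\!\int\![\mu(|\nabla u|^2+|\nabla h|^2)+(\mu+\lambda)(|\div u|^2+|\div h|^2)-2R(\rho\theta-\tilde\rho\tilde\theta)\div u]=-2\!\int\!\rho u_t(u\cdot\nabla u)-2\!\int\!P_t\div g.
\]

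Finally, I would estimate the two RHS terms. The convection term is controlled by Cauchy--Schwarz and $\|\rho\|_{L^\infty}\le M^*$: $|2\int\rho u_t(u\cdot\nabla u)|\le\epsilon\int\rho|u_t|^2+CM^*\int|u|^2|\nabla u|^2$, contributing the $\|u|\nabla u|\|_{L^2}^2$ term. The delicate term is $2\int P_t\div g$. Writing $P_t=-R\div(\rho u\theta)+R\rho\dot\theta$ (from the continuity equation) and integrating by parts gives $\int P_t\div g=R\int\rho u\theta\cdot\nabla\div g+R\int\rho\dot\theta\div g$. The factor $\|\nabla\div g\|_{L^2}\le\|\nabla^2 g\|_{L^2}\le C\|\rho\dot u\|_{L^2}\le C\sqrt{M^*}(\|\sqrt\rho u_t\|_{L^2}+\sqrt{M^*}\|u|\nabla u|\|_{L^2})$ comes from the Lamé estimate \eqref{h-g-estimate-Dirichlet} for $g$. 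The factor $\|\rho u\theta\|_{L^2}$ is handled by splitting $\theta=(\theta-\tilde\theta)+\tilde\theta$ and applying the Lorentz--Hölder / interpolation machinery used in Lemmas \ref{lem-u-L^4} and \ref{lem-theta-rho-L^2}, which produces the $(1+\|\theta-\tilde\theta\|_{L^{r,\infty}}^s)$-weighted bounds in terms of $\|\sqrt\rho|u|^2\|^2$, $\|\sqrt\rho(\theta-\tilde\theta)\|^2$, $\|\rho-\tilde\rho\|^2$, and $\|\nabla u\|^2$. The $R\int\rho\dot\theta\div g$ term is treated by inserting the temperature equation \eqref{FCNS-eq}$_3$ (so that $\rho\dot\theta$ becomes a combination of $\Delta\theta$, dissipation, and $P\div u$); after a further integration by parts shifting the Laplacian onto $\div g$, this yields the $\|\nabla\theta\|_{L^2}^2$ contribution together with dissipation terms that are absorbed into $\|u|\nabla u|\|_{L^2}^2$. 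Absorbing the accumulated $\epsilon\int\rho|u_t|^2$ on the LHS (which has a coefficient $2$) leaves a remaining coefficient $\ge 1$ in front of $\int\rho|u_t|^2$, producing the claimed estimate. The main obstacle is the $\int P_t\div g$ term: one must carefully interleave the elliptic estimate for $g$, the temperature equation, and the Lorentz-space Hölder inequalities to obtain exactly the structure of the RHS without loss.
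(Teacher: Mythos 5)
Your proposal follows the same overall route as the paper: multiply the momentum equation by $u_t$, integrate, introduce the decomposition $u=h+g$ so that $-\int P_t\div h$ becomes a total time derivative of $\mu|\nabla h|^2+(\mu+\lambda)|\div h|^2$ (your derivation via self-adjointness of $L$ is correct and equivalent to the paper's), then substitute the combined continuity/temperature identity for $P_t$ into $\int P_t\div g$ and estimate. However, there is a real gap in the final step: after inserting $\eqref{FCNS-eq}_3$ you are left with $\int\bigl(2\mu|\mathcal D(u)|^2+\lambda(\div u)^2\bigr)\div g$, and you claim this is ``absorbed into $\|u|\nabla u|\|_{L^2}^2$.'' That is not true as stated, since this integral has the form $\int|\nabla u|^2\div g$ with no factor of $u$ to pair with, and $\div g$ is not controlled in $L^\infty$. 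The paper's key manoeuvre (cf.\ \eqref{u-1rd-estimate3}) is an additional integration by parts that moves the divergence of the stress tensor onto $u\div g$, followed by substitution of the momentum equation $\mu\Delta u+(\mu+\lambda)\nabla\div u-\nabla P=\rho\dot u$. This produces the terms $\int\rho\dot u\cdot u\div g$, $\int 2\mu\mathcal D(u):\nabla\div g\otimes u$ and $\int\lambda u\cdot\nabla\div g\,\div u$, each of which \emph{does} carry a $u$ factor and is therefore amenable to the bound by $\|u|\nabla u|\|_{L^2}$ and $\|\nabla\div g\|_{L^2}\lesssim\|\rho\dot u\|_{L^2}$.

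A second, related point you underplay: the term $\int\rho\dot u\cdot u\div g$ forces you to control $\int\rho|u|^2|\div g|^2$. This requires writing $\div g=\div u-\div h$, absorbing the $\div u$ piece into $\|u|\nabla u|\|_{L^2}^2$, and estimating $\int\rho|u|^2|\div h|^2$ via the weak-type elliptic estimate $\|\nabla h\|_{L^{r,\infty}}\lesssim\|P-P(\tilde\rho,\tilde\theta)\|_{L^{r,\infty}}$ from \eqref{h-estimate-Lorentz} together with Lorentz--H\"older (this is \eqref{u-1rd-estimate5-1}--\eqref{u-1rd-estimate5-2} in the paper). Your plan invokes ``Lorentz--H\"older / interpolation machinery'' in a general way but does not identify that it is $\div h$, not the full $\div g$, that must be estimated this way; without making that split the argument does not close. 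The rest of your estimates (the convective term, the $\nabla\theta\cdot\nabla\div g$ term, and the bound $\|\nabla^2 g\|_{L^2}\lesssim\|\rho\dot u\|_{L^2}$) are consistent with the paper.
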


\begin{proof}
  Multiplying $\eqref{FCNS-eq}_2$ by $u_t$, and then integrating the resultant equation over $\Omega$, we have from integration by parts that
  \begin{equation}\label{u-1rd-estimate1}
    \begin{aligned}
    &\quad\frac{1}{2}\frac{d}{dt}\int[\mu|\nabla u|^2+(\mu+\lambda)|\div u|^2]+\int\rho|u_t|^2\\
    &=-\int\nabla P\cdot u_t-\int\rho u\cdot\nabla u\cdot u_t\\
    &\leq\frac{d}{dt}\int R(\rho\theta-\tilde{\rho}\tilde{\theta})\div u-\int P_t\div u+\frac{1}{4}\int\rho|u_t|^2+C\int|u|^2|\nabla u|^2.
    \end{aligned}
  \end{equation}
  By decomposing $u=h+g$, and applying \eqref{h-pressure-eq} and \eqref{g-material-eq}, we get
  \begin{equation}\label{u-1rd-estimate2}
    \begin{aligned}
    -\int P_t\div u&=-\int P_t\div h-\int P_t\div g\\
    &=\int Lh_t\cdot h-\int P_t\div g\\
    &=-\frac{1}{2}\frac{d}{dt}\int(\mu|\nabla h|^2+(\mu+\lambda)|\div h|^2)-\int P_t\div g.
    \end{aligned}
  \end{equation}
  Using the system $\eqref{FCNS-eq}$, we have
  \begin{equation}\label{P-t-eq}
  \begin{aligned}
    \frac{c_v}{R}P_t&=-\div[c_v(\rho\theta-\tilde{\rho}\tilde{\theta})u]-\tilde{\rho}\tilde{\theta}(c_v+R)\div u-R(\rho\theta-\tilde{\rho}\tilde{\theta})\div u\\
    &\qquad+2\mu|\mathcal{D}(u)|^2+\lambda|\div u|^2+\kappa\Delta\theta,
  \end{aligned}
  \end{equation}
  which implies
  \begin{equation}\label{u-1rd-estimate3}
  \begin{aligned}
    &\quad-\int\frac{c_v}{R}P_t\div g\\
    &=-\int c_v(\rho\theta-\tilde{\rho}\tilde{\theta})u\cdot\nabla\div g+\tilde{\rho}\tilde{\theta}(c_v+R)\int\div u\div g+\int R(\rho\theta-\tilde{\rho}\tilde{\theta})\div u\div g\\
    &\qquad-\int(2\mu|\mathcal{D}(u)|^2+\lambda|\div u|^2)\div g+\int\kappa\nabla\theta\cdot\nabla\div g\\
    &=-\int c_v(\rho\theta-\tilde{\rho}\tilde{\theta})u\cdot\nabla\div g-\tilde{\rho}\tilde{\theta}(c_v+R)\int u\cdot\nabla\div g\\
    &\quad+\int(\mu\Delta u+(\mu+\lambda)\nabla\div u-\nabla P)\cdot u\div g+\int2\mu\mathcal{D}(u):\nabla\div g\otimes u\\
    &\quad+\int\lambda u\cdot\nabla\div g\div u-\int R(\rho\theta-\tilde{\rho}\tilde{\theta})u\cdot\nabla\div g+\int\kappa\nabla\theta\cdot\nabla\div g\\
    &=-\int(c_v+R)(\rho\theta-\tilde{\rho}\tilde{\theta})u\cdot\nabla\div g-\tilde{\rho}\tilde{\theta}(c_v+R)\int u\cdot\nabla\div g+\int\kappa\nabla\theta\cdot\nabla\div g\\
    &\quad+\int\rho\dot{u}\cdot u\div g+\int2\mu\mathcal{D}(u):\nabla\div g\otimes u+\int\lambda u\cdot\nabla\div g\div u.
  \end{aligned}
  \end{equation}
  Therefore, by virtue of \eqref{h-g-estimate-Dirichlet} and Sobolev inequality, we conclude that
  \begin{equation}\label{u-1rd-estimate4}
  \begin{aligned}
    &\quad-\int P_t\div g\\
    &\lesssim \int|\rho\theta-\tilde{\rho}\tilde{\theta}||u||\nabla\div g|+\int|u||\nabla\div g|+\int\rho\dot{u}|u||\div g|+\int|\nabla u||u||\nabla\div g|+\int|\nabla\theta||\nabla\div g|\\
    &\leq C_{\epsilon}\left(\norm{u|\nabla u|}_{L^2}^2+\int\rho^2|\theta-\tilde{\theta}|^2|u|^2+\int|\rho-\tilde{\rho}|^2|u|^2+\norm{u}_{L^2}^2+\int\rho|u|^2|\div g|^2+\norm{\nabla\theta}_{L^2}^2\right)\\
    &\quad+\epsilon\norm{\nabla^2g}_{L^2}^2+\epsilon\norm{\sqrt{\rho}u_t}_{L^2}\\
    &\leq C\epsilon\norm{\sqrt{\rho}u_t}_{L^2}^2+C_{\epsilon}\left(\norm{u|\nabla u|}_{L^2}^2+\int\rho^2|\theta-\tilde{\theta}|^2|u|^2+\norm{\nabla u}_{L^2}^2+\int\rho|u|^2|\div h|^2+\norm{\nabla\theta}_{L^2}^2\right).
  \end{aligned}
  \end{equation}
  Next, we deal with the term $\displaystyle\int\rho|u|^2|\div h|^2$. Borrowing the ideas used in deriving \eqref{L^4-estimate-3-1} and \eqref{L^4-estimate-3-2} and the estimates in \eqref{h-g-estimate-Dirichlet} and \eqref{h-estimate-Lorentz}, we have
  \begin{itemize}
    \item if $r=\infty$, it holds that
    \begin{equation}\label{u-1rd-estimate5-1}
      \begin{aligned}
      \int\rho|u|^2|\div h|^2&\lesssim\norm{\sqrt{\rho}|u|^2}_{L^2}\norm{\nabla h}_{L^4}^2\\
      &\lesssim\norm{\sqrt{\rho}|u|^2}_{L^2}\norm{\rho\theta-\tilde{\rho}\tilde{\theta}}_{L^4}^2\\
      &\lesssim\left(\norm{\theta-\tilde{\theta}}_{L^{\infty}}+\tilde{\theta}\right)\left(\norm{\sqrt{\rho}|u|^2}_{L^2}^2
      +\norm{\sqrt{\rho}(\theta-\tilde{\theta})}_{L^2}^2+\norm{\rho-\tilde{\rho}}_{L^2}^2\right).
      \end{aligned}
    \end{equation}
    \item if $r\in(\frac{3}{2},\infty)$, by \eqref{Lorentz-Holder} and \eqref{Lorentz-interpolation}, we have
    \begin{equation}\label{u-1rd-estimate5-2}
      \begin{aligned}
      &\quad\int\rho|u|^2|\div h|^2\\
      &\lesssim\norm{\nabla h}_{L^{r,\infty}}\norm{\sqrt{\rho}|u|^2}_{L^{\frac{2r}{r-1},2}}\norm{\nabla h}_{L^{\frac{2r}{r-1},2}}\\
      &\lesssim\norm{R(\rho\theta-\tilde{\rho}\tilde{\theta})}_{L^{r,\infty}}\norm{\sqrt{\rho}|u|^2}_{L^{\frac{2r_1}{r_1-1}}}^{\frac{1}{2}}\norm{\sqrt{\rho}|u|^2}_{L^{\frac{2r_1}{r_1-1}}}^{\frac{1}{2}}\norm{\nabla h}_{L^{\frac{2r_1}{r_1-1}}}^{\frac{1}{2}}\norm{\nabla h}_{L^{\frac{2r_2}{r_2-1}}}^{\frac{1}{2}}\\
      &\lesssim\left(\norm{\rho(\theta-\tilde{\theta})}_{L^{r,\infty}}+\norm{\tilde{\theta}(\rho-\tilde{\rho})}_{L^{r,\infty}}\right)\norm{\sqrt{\rho}|u|^2}_{L^2}^{1-\frac{3}{2r}}\norm{\sqrt{\rho}|u|^2}_{L^6}^{\frac{3}{2r}}
      \norm{\nabla h}_{L^2}^{1-\frac{3}{2r}}\norm{\nabla h}_{L^6}^{\frac{3}{2r}}\\
      &\lesssim\left(\norm{\theta-\tilde{\theta}}_{L^{r,\infty}}^2+\tilde{\theta}\right)
      \left(\norm{\sqrt{\rho}|u|^2}_{L^2}^{2-\frac{3}{r}}\norm{|u|^2}_{L^6}^{\frac{3}{r}}
      +\norm{\rho\theta-\tilde{\rho}\tilde{\theta}}_{L^2}^{2-\frac{3}{r}}\norm{\rho\theta-\tilde{\rho}\tilde{\theta}}_{L^6}^{\frac{3}{r}}\right)\\
      &\leq C_{\epsilon}\left(\norm{\theta-\tilde{\theta}}_{L^{r,\infty}}^s+1\right)\left
      (\norm{\sqrt{\rho}|u|^2}_{L^2}^2+\norm{\sqrt{\rho}(\theta-\tilde{\theta})}_{L^2}^2+\norm{\rho-\tilde{\rho}}_{L^2}^2+1\right)\\
      &\qquad+\epsilon\left(\norm{\nabla\theta}_{L^2}^2+\norm{u|\nabla u|}_{L^2}^2\right),
      \end{aligned}
    \end{equation}
    where $\frac{3}{2}<r_1<r<r_2<\infty$ satisfy $\frac{2}{r}=\frac{1}{r_1}+\frac{1}{r_2}$, and we have used Sobolev inequality and \eqref{L2-control-lem} for the bounded domain $\Omega$ in the last inequality .
  \end{itemize}
  Plugging \eqref{theta-L^2-estimate3-1}, \eqref{theta-L^2-estimate3-3}, \eqref{u-1rd-estimate5-1} and \eqref{u-1rd-estimate5-2} into \eqref{u-1rd-estimate4}, we have
  \begin{equation*}
    \begin{aligned}
    -\int P_t\div g&\leq C_{\epsilon}\left(\norm{\theta-\tilde{\theta}}_{L^{r,\infty}}^s+1\right)\left
    (\norm{\sqrt{\rho}|u|^2}_{L^2}^2+\norm{\sqrt{\rho}(\theta-\tilde{\theta})}_{L^2}^2+\norm{\rho-\tilde{\rho}}_{L^2}^2+\norm{\nabla u}_{L^2}^2+\norm{\nabla\theta}_{L^2}^2+1\right)\\
    &\quad+C\epsilon\left(\norm{\sqrt{\rho}u_t}_{L^2}^2+\norm{u|\nabla u|}_{L^2}^2\right),
    \end{aligned}
  \end{equation*}
  which together with \eqref{u-1rd-estimate1} implies \eqref{u-1rd-estimate-Dirichlet} immediately.

 Therefore, we have completed the proof of Lemma \ref{lem-u-1rd-Dirichlet}.
\end{proof}
\begin{cor}\label{cor-u-1rd-Dirichlet}
Let $\Omega$ be bounded with Dirichlet boundary condition \eqref{Dirichlet-condition}. Under the assumptions of Theorem \ref{thm-blowup-FCNS} and \eqref{assumption1}, we have
\begin{equation}\label{cor-u-1rd-estimate-Dirichlet}
  \sup_{t\in[0,T]}\int(\rho|u|^4+\rho|\theta-\tilde{\theta}|^2+|\rho-\tilde{\rho}|^2+|\nabla u|^2)+\int_0^T\int(\rho|u_t|^2+|u|^2|\nabla u|^2+|\nabla\theta|^2)\leq C,
\end{equation}
for any $T\in(0,T^*)$.
\end{cor}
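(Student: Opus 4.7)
The plan is to form a suitable linear combination of the differential inequalities established in Lemmas \ref{lem-u-L^4-close}, \ref{lem-theta-rho-L^2}, and \ref{lem-u-1rd-Dirichlet}, arrange the coefficients so that every "bad" term on the right-hand sides is absorbed by a good term on some left-hand side, and then close the loop with Gr\"onwall's inequality using the integrability $\|\theta-\tilde{\theta}\|_{L^{r,\infty}}^s \in L^1(0,T^{*})$ supplied by assumption \eqref{assumption1}.

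First, I would define the aggregate $0$-order quantity
\begin{equation*}
\mathcal{A}(t):=\int\rho|u|^4+\int c_v\rho|\theta-\tilde\theta|^2+\int|\rho-\tilde\rho|^2+\int|\nabla u|^2,
\end{equation*}
together with the dissipation
\begin{equation*}
\mathcal{D}(t):=\int\rho|u_t|^2+\int|u|^2|\nabla u|^2+\int|\nabla\theta|^2.
\end{equation*}
Adding Lemma \ref{lem-u-L^4-close} (using \eqref{u-L^4-close1} if $\lambda>0$ or \eqref{u-L^4-close2} if $\lambda\le0$), then $K_1$ times Lemma \ref{lem-theta-rho-L^2}, and then $K_2$ times Lemma \ref{lem-u-1rd-Dirichlet}, I would fix $K_2\gg 1$ so that the $\frac{\mu}{2}\int|u|^2|\nabla u|^2$ term on the LHS of the first inequality absorbs the $C\|u|\nabla u|\|_{L^2}^2$ appearing on the RHS of Lemma \ref{lem-u-1rd-Dirichlet} (after adjusting $\epsilon$), and then choose $K_1\gg K_2$ so that $K_1\kappa\int|\nabla\theta|^2$ dominates both $C\epsilon\|\nabla\theta\|_{L^2}^2$ (from Lemma \ref{lem-u-L^4-close}) and $CK_2\|\nabla\theta\|_{L^2}^2$ (from Lemma \ref{lem-u-1rd-Dirichlet}). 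Finally picking $\epsilon$ small enough (recalling $\dot u=u_t+u\cdot\nabla u$, so $\|\sqrt{\rho}\dot u\|_{L^2}^2\lesssim\|\sqrt{\rho}u_t\|_{L^2}^2+\||u||\nabla u|\|_{L^2}^2$), the $\rho|u_t|^2$ contributions on the RHS of Lemmas \ref{lem-u-L^4-close} and \ref{lem-theta-rho-L^2} are swallowed by $K_2\int\rho|u_t|^2$ on the LHS of Lemma \ref{lem-u-1rd-Dirichlet}.

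Next I must verify that the \emph{modified} energies on the left are equivalent to $\mathcal{A}$ modulo terms already controlled by $\mathcal{A}$. The cross term $-2R(\rho\theta-\tilde\rho\tilde\theta)\div u$ in \eqref{u-1rd-estimate-Dirichlet} is handled by Cauchy--Schwarz, $|2R(\rho\theta-\tilde\rho\tilde\theta)\div u|\le \frac{\mu+\lambda}{2}|\div u|^2+C(\rho|\theta-\tilde\theta|^2+|\rho-\tilde\rho|^2)$; the ancillary $h$-terms $\mu|\nabla h|^2+(\mu+\lambda)|\div h|^2$ are non-negative and by \eqref{h-g-estimate-Dirichlet} bounded by $C(\rho|\theta-\tilde\theta|^2+|\rho-\tilde\rho|^2)+C$; and the modifier $-\frac{4Cc_v}{\lambda}\rho|u|^2(\theta-\tilde\theta)$ in \eqref{u-L^4-close1} is bounded by $\frac{1}{2}\int\rho|u|^4+C\int\rho|\theta-\tilde\theta|^2$. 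Consequently, after these absorptions, the combined inequality reduces to
\begin{equation*}
\frac{d}{dt}\mathcal{A}(t)+c\,\mathcal{D}(t)\le C\bigl(1+\|\theta-\tilde\theta\|_{L^{r,\infty}}^{s}\bigr)\bigl(\mathcal{A}(t)+1\bigr).
\end{equation*}
Since \eqref{assumption1} gives $\int_0^{T^{*}}\|\theta-\tilde\theta\|_{L^{r,\infty}}^s\,dt\le (M^{*})^s<\infty$, Gr\"onwall's inequality yields $\sup_{[0,T]}\mathcal{A}(t)+\int_0^T\mathcal{D}(t)\,dt\le C$ for all $T<T^{*}$, which is exactly \eqref{cor-u-1rd-estimate-Dirichlet}.

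The main obstacle I foresee is the bookkeeping of the four simultaneous absorptions: the triple coupling between $\int\rho|u_t|^2$ (produced by the kinetic and thermal lemmas, consumed by the $1$-order velocity lemma), $\|u|\nabla u|\|_{L^2}^2$ (produced by the $1$-order velocity lemma, consumed by the $L^4$ lemma), and $\|\nabla\theta\|_{L^2}^2$ (produced by everything, consumed by the temperature lemma) requires choosing the three smallness/largeness parameters $(\epsilon,K_1,K_2)$ in a strict order. One must also remember that the modifier in \eqref{u-L^4-close1} is singular as $\lambda\to 0^+$, so the cases $\lambda>0$ and $\lambda\le 0$ are handled separately via \eqref{u-L^4-close1} and \eqref{u-L^4-close2} respectively; the computation is otherwise identical.
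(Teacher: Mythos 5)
Your overall plan is exactly the paper's: form a weighted linear combination of the differential inequalities from Lemmas \ref{lem-u-L^4-close}, \ref{lem-theta-rho-L^2}, and \ref{lem-u-1rd-Dirichlet}, absorb the bad right-hand terms into the dissipation terms, check positive-definiteness (up to controlled pieces) of the modified energy, and close with Gr\"onwall using $\int_0^{T^*}\|\theta-\tilde\theta\|_{L^{r,\infty}}^s\,dt\le (M^*)^s$. The discussion of the cross term $-2R(\rho\theta-\tilde\rho\tilde\theta)\div u$, the $h$-pieces, the modifier $-\frac{4Cc_v}{\lambda}\rho|u|^2(\theta-\tilde\theta)$, and the $\lambda>0$ versus $\lambda\le0$ split is all consistent with the paper.

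There is, however, a concrete error in the choice of multipliers that prevents the estimate from closing as you wrote it. You multiply Lemma \ref{lem-u-1rd-Dirichlet} by $K_2\gg 1$ and leave Lemma \ref{lem-u-L^4-close} with coefficient $1$. The troublesome term $C\|u|\nabla u|\|_{L^2}^2$ on the right of \eqref{u-1rd-estimate-Dirichlet} then becomes $K_2 C\|u|\nabla u|\|_{L^2}^2$, while the dissipation $\tfrac{\mu}{2}\int|u|^2|\nabla u|^2$ supplied by \eqref{u-L^4-close1} (or \eqref{u-L^4-close2}) stays at a fixed size. Absorption requires $K_2 C<\tfrac{\mu}{2}$, i.e.\ $K_2$ \emph{small}, the opposite of what you claimed; the constant $C$ in \eqref{u-1rd-estimate-Dirichlet} is already fixed (the $\epsilon$ in its proof has been chosen), so ``adjusting $\epsilon$'' cannot rescue a large $K_2$. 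The paper's normalization makes the order explicit: keep \eqref{u-1rd-estimate-Dirichlet} with coefficient $1$, multiply \eqref{u-L^4-close1} by $M_1$ with $M_1\mu>2C$, multiply \eqref{theta-rho-L^2-estimate} by $M$ with $\sqrt{M_1 M}\gtrsim\tfrac{4Cc_v}{\lambda}M_1$ (so the sign-indefinite modifier is dominated), and only then take $\epsilon$ small relative to $M_1,M$ so that the $\|\sqrt\rho u_t\|_{L^2}^2$ and $\|\sqrt\rho\dot u\|_{L^2}^2$ leftovers are swallowed by $\int\rho|u_t|^2$ and $M_1\tfrac{\mu}{2}\int|u|^2|\nabla u|^2$. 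In your notation the fix is to put the large multiplier on the $L^4$-energy inequality rather than on the $1$-order one (equivalently, $K_2\le\mu/(2C)$), then $K_1$ large, then $\epsilon$ small.
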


\begin{proof}
  Since the proof is similar to that of \cite{Feireisl2024}, we only sketch it here.
To begin with, multiplying \eqref{theta-rho-L^2-estimate} by a large number $M>0$ and adding it to \eqref{u-1rd-estimate-Dirichlet}, and taking $\epsilon>0$ suitably small, we obtain
  \begin{equation}\label{u-1rd-Dirichlet-transition1}
  \begin{aligned}
    &\quad\frac{d}{dt}\int F_1+\int\rho|u_t|^2+M\int|\nabla\theta|^2\\
    &\leq C\left(\norm{\theta-\tilde{\theta}}_{L^{r,\infty}}^s+1\right)\left
    (\norm{\sqrt{\rho}|u|^2}_{L^2}^2+\norm{\sqrt{\rho}(\theta-\tilde{\theta})}_{L^2}^2+\norm{\rho-\tilde{\rho}}_{L^2}^2+\norm{\nabla u}_{L^2}^2+1\right)\\
    &\qquad+C\norm{u|\nabla u|}_{L^2}^2,
  \end{aligned}
  \end{equation}
  where
  \begin{equation*}
    F_1\sim |\nabla u|^2+|\nabla h|^2+M\left(\rho|\theta-\tilde{\theta}|^2+|\rho-\tilde{\rho}|^2\right).
  \end{equation*}

  For the case $\lambda>0$,  multiplying \eqref{u-L^4-close1} by a large number $M_1>0$ and adding it into \eqref{u-1rd-Dirichlet-transition1}, and taking $\epsilon>0$ suitably small, we get
  \begin{equation}\label{u-1rd-Dirichlet-transition2}
  \begin{aligned}
    &\quad\frac{d}{dt}\int F_2+\int\rho|u_t|^2+\int|\nabla\theta|^2+\int|u|^2|\nabla u|^2\\
    &\leq C\left(\norm{\theta-\tilde{\theta}}_{L^{r,\infty}}^s+1\right)\left
    (\norm{\sqrt{\rho}|u|^2}_{L^2}^2+\norm{\sqrt{\rho}(\theta-\tilde{\theta})}_{L^2}^2+\norm{\rho-\tilde{\rho}}_{L^2}^2+\norm{\nabla u}_{L^2}^2+1\right),
  \end{aligned}
  \end{equation}
  where
  \begin{equation*}
    F_2\sim |\nabla u|^2+|\nabla h|^2+\rho|\theta-\tilde{\theta}|^2+|\rho-\tilde{\rho}|^2+\rho|u|^4,
  \end{equation*}
  and $M_1$ satisfies that
  \begin{equation*}
    \sqrt{M_1M}\gtrsim\frac{4Cc_v}{\lambda}M_1, \quad M_1\mu>2C,
  \end{equation*}
  where $C$ is the constant of $C\norm{u|\nabla u|}_{L^2}^2$ in \eqref{u-1rd-Dirichlet-transition1}. This means that we should choose $M_1$ firstly and then determine the size of $M$. Then, applying Gronwall's inequality to \eqref{u-1rd-Dirichlet-transition2} yields the desired estimate \eqref{cor-u-1rd-estimate-Dirichlet} immediately.

  For the case $\lambda\leq 0$, multiplying \eqref{u-L^4-close2} by a large number $M_2>0$, adding it into \eqref{u-1rd-Dirichlet-transition1}, and taking $\epsilon>0$ suitably small, we obtain \eqref{u-1rd-Dirichlet-transition2}. Thus, we have completed the proof of Lemma \ref{cor-u-1rd-Dirichlet}.
\end{proof}
Next, we will give the estimate on the 1-order derivative of velocity $u$ for the Cauchy problem case.
\begin{lem}\label{lem-u-1rd-Cauchy}
  Let $\Omega=\mathbb{R}^3$. Under the assumptions of Theorem \ref{thm-blowup-FCNS} and \eqref{assumption1}, we have
  \begin{equation}\label{u-1rd-estimate-Cauchy}
    \begin{aligned}
    &\quad\frac{d}{dt}\int[\mu|\nabla u|^2-2R(\rho\theta-\tilde{\rho}\tilde{\theta})\div u+(\mu+\lambda)|\div u|^2+\frac{1}{2\mu+\lambda}|R(\rho\theta-\tilde{\rho}\tilde{\theta})|^2]+\int\rho|u_t|^2\\
    &\leq C\left(1+\norm{\theta-\tilde{\theta}}_{L^{r,\infty}}^s\right)\int\left(\rho|u|^4+\rho|\theta-\tilde{\theta}|^2\right)+C\int\rho|u|^2
    +C\int|\nabla\theta|^2+C\int|u|^2|\nabla u|^2.
    \end{aligned}
  \end{equation}
  In particular, if $\tilde{\theta}=0$, the term $\displaystyle\int\rho|u|^2$ in the right-hand side will vanish.
\end{lem}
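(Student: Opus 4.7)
The plan is to mirror the proof of Lemma~\ref{lem-u-1rd-Dirichlet}, replacing the Helmholtz decomposition $u=h+g$ (which is unavailable on $\mathbb{R}^3$) by the effective viscous flux identity $\div u=(G+R(\rho\theta-\tilde{\rho}\tilde{\theta}))/(2\mu+\lambda)$ together with the elliptic bound \eqref{GW-estimate1}. Multiplying $\eqref{FCNS-eq}_2$ by $u_t$, integrating over $\mathbb{R}^3$, and setting $\tilde{P}:=R(\rho\theta-\tilde{\rho}\tilde{\theta})$, one obtains
\begin{equation*}
\frac{1}{2}\frac{d}{dt}\int\bigl[\mu|\nabla u|^2+(\mu+\lambda)|\div u|^2-2\tilde{P}\div u\bigr]+\int\rho|u_t|^2=-\int P_t\,\div u-\int\rho(u\cdot\nabla)u\cdot u_t,
\end{equation*}
exactly as in \eqref{u-1rd-estimate1}. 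The convection term is absorbed into $\tfrac14\int\rho|u_t|^2+C\int|u|^2|\nabla u|^2$ by Cauchy-Schwarz, as before.

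The novelty lies in handling $-\int P_t\,\div u$ without the $h,g$ decomposition. Inserting $\div u=(G+\tilde{P})/(2\mu+\lambda)$ and using $P_t\tilde{P}=\tfrac12\partial_t\tilde{P}^2$, I would split
\begin{equation*}
-\int P_t\,\div u=-\frac{1}{2\mu+\lambda}\int P_t\,G-\frac{1}{2(2\mu+\lambda)}\frac{d}{dt}\int\tilde{P}^2.
\end{equation*}
The second time-derivative, once transferred to the left-hand side, furnishes precisely the extra integrand $\frac{1}{2\mu+\lambda}|R(\rho\theta-\tilde{\rho}\tilde{\theta})|^2$ in \eqref{u-1rd-estimate-Cauchy} (up to the overall factor of $2$ matching the convention of Lemma~\ref{lem-u-1rd-Dirichlet}). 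For the first piece $-\tfrac{1}{2\mu+\lambda}\int P_t G$, I would substitute the pressure evolution equation \eqref{P-t-eq} and integrate by parts against $G$, producing analogs of every term in \eqref{u-1rd-estimate3} with $\div g$ replaced by $G$: namely $\int\tilde{P}u\cdot\nabla G$, $\tilde{\rho}\tilde{\theta}\int G\div u$, $\int\tilde{P}G\div u$, $-\int(2\mu|\mathcal{D}(u)|^2+\lambda|\div u|^2)G$, and $\int\kappa\nabla\theta\cdot\nabla G$.

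The linear-in-$\nabla G$ pieces are controlled by the key bound $\norm{\nabla G}_{L^2}\lesssim\norm{\rho\dot{u}}_{L^2}\lesssim\norm{\sqrt{\rho}\dot{u}}_{L^2}$ of \eqref{GW-estimate1}; writing $\dot u=u_t+u\cdot\nabla u$, the factor $\norm{\sqrt\rho u_t}_{L^2}$ is absorbed into $\epsilon\int\rho|u_t|^2$, while $\norm{\sqrt\rho u\cdot\nabla u}_{L^2}^2\leq\norm{\rho}_{L^\infty}\int|u|^2|\nabla u|^2$ feeds into the right-hand side. Factors of $\tilde{P}$ are handled by the Lorentz-space interpolation machinery developed in Lemmas~\ref{lem-u-L^4} and \ref{lem-theta-rho-L^2}. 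The principal obstacle is the dissipation term $-\int(2\mu|\mathcal{D}(u)|^2+\lambda|\div u|^2)G$, which naively contains the uncontrolled product $|\nabla u|^2|G|$. I would dispose of it by imitating the integration-by-parts trick of \eqref{u-1rd-estimate3}: shift one derivative off the quadratic factor onto $u$ to obtain $\int(\mu\Delta u+(\mu+\lambda)\nabla\div u)\cdot u\,G+\int 2\mu\,u_j\mathcal{D}_{ij}(u)\partial_iG+\int\lambda\,u\cdot\div u\,\nabla G$, then use $\eqref{FCNS-eq}_2$ to replace the Lam\'e operator by $\rho\dot{u}+\nabla\tilde{P}$. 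The result contains only products in which at most one factor of $\nabla u$ multiplies $G$ or $\nabla G$, bounded by H\"older-Sobolev with $\norm{uG}_{L^2}\leq\norm{u}_{L^6}\norm{G}_{L^3}$, $\norm{G}_{L^3}\lesssim\norm{G}_{L^2}^{1/2}\norm{\nabla G}_{L^2}^{1/2}$, and $\norm{G}_{L^2}\lesssim\norm{\nabla u}_{L^2}+\norm{\tilde{P}}_{L^2}$. Collecting estimates, choosing $\epsilon$ small, and multiplying through by $2$ to match the statement's convention would yield \eqref{u-1rd-estimate-Cauchy}.
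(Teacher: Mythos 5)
Your high-level strategy exactly matches the paper's proof: replacing the Helmholtz pieces $h,g$ by the effective viscous flux $G$, writing $\div u=(G+\tilde P)/(2\mu+\lambda)$, pulling the $\tfrac12\partial_t\tilde P^2$ term into the energy, substituting the pressure evolution \eqref{P-t-eq} into $-\int P_tG$, and then applying the integration-by-parts/momentum-equation trick of \eqref{u-1rd-estimate3} to eliminate the dissipation term $\int(2\mu|\mathcal D(u)|^2+\lambda|\div u|^2)G$. All of that is what the paper's \eqref{u-1rd-estimate-Cauchy1}--\eqref{u-1rd-estimate-Cauchy3} does.

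There is, however, a genuine gap in your proposed treatment of the term $\int\rho\dot u\cdot uG$, which is where the factor $uG$ actually arises. You propose the chain
$\|uG\|_{L^2}\le\|u\|_{L^6}\|G\|_{L^3}$, $\|G\|_{L^3}\lesssim\|G\|_{L^2}^{1/2}\|\nabla G\|_{L^2}^{1/2}$, $\|G\|_{L^2}\lesssim\|\nabla u\|_{L^2}+\|\tilde P\|_{L^2}$. Combined with $\|\nabla G\|_{L^2}\lesssim\|\sqrt\rho\dot u\|_{L^2}$, this gives
$\|uG\|_{L^2}^2\lesssim\|\nabla u\|_{L^2}^2\bigl(\|\nabla u\|_{L^2}+\|\tilde P\|_{L^2}\bigr)\|\sqrt\rho\dot u\|_{L^2}$.
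After Cauchy--Schwarz with $\|\sqrt\rho\dot u\|_{L^2}$ and Young's inequality to absorb the material-derivative factor, you are left with powers of $\|\nabla u\|_{L^2}$ strictly larger than $2$ (namely $\|\nabla u\|_{L^2}^4$ up to $\|\nabla u\|_{L^2}^6$) on the right-hand side. These do not appear among the admissible terms in \eqref{u-1rd-estimate-Cauchy} and would ruin the subsequent Gronwall closure in Corollary \ref{cor-u-1rd-Cauchy}; note that $\int|u|^2|\nabla u|^2$ is a weighted quantity and is not bounded by $\|\nabla u\|_{L^2}^4$. The paper avoids this entirely: after Cauchy--Schwarz it faces $\int\rho|u|^2|G|^2$ and bounds $|G|$ pointwise via $|G|\lesssim|\div u|+|\tilde P|$, so that
$\int\rho|u|^2|G|^2\lesssim\int\rho|u|^2|\nabla u|^2+\int\rho^2|\theta-\tilde\theta|^2|u|^2+\int\rho|u|^2$,
and the middle term is then handled by the Lorentz interpolation of Lemma~\ref{lem-u-L^4} (i.e.\ \eqref{L^4-estimate-3-1}--\eqref{L^4-estimate-3-2}). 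You should replace your Gagliardo--Nirenberg step for $\|G\|_{L^3}$ by this pointwise estimate; once that is done the rest of your argument coincides with the paper's.
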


\begin{proof}
  Similar to \eqref{u-1rd-estimate1}, we have
  \begin{equation}\label{u-1rd-estimate-Cauchy1}
    \begin{aligned}
    &\quad\frac{1}{2}\frac{d}{dt}\int(\mu|\nabla u|^2+(\mu+\lambda)|\div u|^2)+\int\rho|u_t|^2\\
    &=-\int\nabla P\cdot u_t-\int\rho u\cdot\nabla u\cdot u_t\\
    &\leq\frac{d}{dt}\int R(\rho\theta-\tilde{\rho}\tilde{\theta})\div u-\int P_t\div u+\frac{1}{4}\int\rho|u_t|^2+C\int|u|^2|\nabla u|^2\\
    &\leq\frac{d}{dt}\int R(\rho\theta-\tilde{\rho}\tilde{\theta})\div u-\frac{1}{2(2\mu+\lambda)}\frac{d}{dt}\int|R(\rho\theta-\tilde{\rho}\tilde{\theta})|^2-\frac{1}{2\mu+\lambda}\int P_tG\\
    &\qquad+\frac{1}{4}\int\rho|u_t|^2+C\int|u|^2|\nabla u|^2,
    \end{aligned}
  \end{equation}
  where $G=(2\mu+\lambda)\div u-P(\rho,\theta)+P(\tilde{\rho},\tilde{\theta})$.

  Analogous to \eqref{u-1rd-estimate3}, by noticing that
  \begin{equation*}
  \begin{aligned}
    \frac{c_v}{R}P_t&=-\div[c_v(\rho\theta-\tilde{\rho}\tilde{\theta})u]-\tilde{\rho}\tilde{\theta}(c_v+R)\div u-R(\rho\theta-\tilde{\rho}\tilde{\theta})\div u\\
    &\qquad+2\mu|\mathcal{D}(u)|^2+\lambda|\div u|^2+\kappa\Delta\theta,
  \end{aligned}
  \end{equation*}
  we obtain
  \begin{equation}\label{u-1rd-estimate-Cauchy2}
  \begin{aligned}
    -\int\frac{c_v}{R}P_tG
    &=-\int c_v(\rho\theta-\tilde{\rho}\tilde{\theta})u\cdot\nabla G+\tilde{\rho}\tilde{\theta}(c_v+R)\int\div uG+\int R(\rho\theta-\tilde{\rho}\tilde{\theta})\div uG\\
    &\qquad-\int(2\mu|\mathcal{D}(u)|^2+\lambda|\div u|^2)G+\int\kappa\nabla\theta\cdot\nabla G\\
    &=-\int c_v(\rho\theta-\tilde{\rho}\tilde{\theta})u\cdot\nabla G+\tilde{\rho}\tilde{\theta}(c_v+R)\int\div uG\\
    &\quad+\int\left(\mu\Delta u+(\mu+\lambda)\nabla\div u-\nabla P\right)\cdot uG+\int2\mu\mathcal{D}(u):\nabla G\otimes u\\
    &\quad+\int\lambda u\cdot\nabla G\div u-\int R(\rho\theta-\tilde{\rho}\tilde{\theta})u\cdot\nabla G+\int\kappa\nabla\theta\cdot\nabla G\\
    &=-(c_v+R)\int\left(\rho(\theta-\tilde{\theta})+\tilde{\theta}\rho\right)u\cdot\nabla G+\int\kappa\nabla\theta\cdot\nabla G\\
    &\quad+\int\rho\dot{u}\cdot uG+\int2\mu\mathcal{D}(u):\nabla G\otimes u+\int\lambda u\cdot\nabla G\div u.
  \end{aligned}
  \end{equation}
  Then, we deduce from \eqref{GW-estimate1} and Sobolev inequality that
  \begin{equation}\label{u-1rd-estimate-Cauchy3}
    \begin{aligned}
    &\quad-\frac{1}{2\mu+\lambda}\int P_tG\\
    &\leq\epsilon\norm{\sqrt{\rho}u_t}_{L^2}^2+\epsilon\norm{\nabla G}_{L^2}^2+C_{\epsilon}\left(\int\rho^2|\theta-\tilde{\theta}|^2|u|^2+\int\rho^2|u|^2+\norm{\nabla\theta}_{L^2}^2+\int\rho|u|^2|G|^2\right)\\
    &\qquad+C\int\rho|u|^2|\nabla u|^2\\
    &\leq C\epsilon\int\rho|u_t|^2+C_{\epsilon}\int\left(\rho^2|\theta-\tilde{\theta}|^2|u|^2+\rho|u|^2+|\nabla \theta|^2+|u|^2|\nabla u|^2\right).
    \end{aligned}
  \end{equation}
  Combining \eqref{L^4-estimate-3-1}, \eqref{L^4-estimate-3-2}, \eqref{u-1rd-estimate-Cauchy3} with \eqref{u-1rd-estimate-Cauchy1}, we obtain
  \begin{equation*}
    \begin{aligned}
    &\quad\frac{d}{dt}\int\left(\mu|\nabla u|^2-2R(\rho\theta-\tilde{\rho}\tilde{\theta})\div u+(\mu+\lambda)|\div u|^2+\frac{1}{2\mu+\lambda}|R(\rho\theta-\tilde{\rho}\tilde{\theta})|^2\right)+\int\rho|u_t|^2\\
    &\leq C\left(1+\norm{\theta-\tilde{\theta}}_{L^{r,\infty}}^s\right)\int\left(\rho|u|^4+\rho|\theta-\tilde{\theta}|^2\right)+C\int\rho|u|^2
    +C\int|\nabla\theta|^2+C\int|u|^2|\nabla u|^2.
    \end{aligned}
  \end{equation*}
  Thus, we have completed the proof of Lemma \ref{lem-u-1rd-Cauchy}.
\end{proof}
Except for the above estimate \eqref{u-1rd-estimate-Cauchy}, we still need a control on $\displaystyle\int\rho|u|^2$ to close the 1-order energy.
\begin{lem}\label{lem-u-L^2-estimate}
  Let $\Omega=\mathbb{R}^3$. Under the assumptions of Theorem \ref{thm-blowup-FCNS} and \eqref{assumption1}, we have
  \begin{equation}\label{u-L^2-estimate}
    \frac{d}{dt}\int\rho|u|^2+\mu\int|\nabla u|^2\leq C\int(\rho|\theta-\tilde{\theta}|^2+|\rho-\tilde{\rho}|^2).
  \end{equation}
\end{lem}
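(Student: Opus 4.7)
The plan is to derive this as a standard basic energy identity, treating the far-field values carefully so that the Cauchy framework makes sense. First I would multiply the momentum equation $\eqref{FCNS-eq}_2$ by $u$ and integrate over $\mathbb{R}^3$. Using the continuity equation $\eqref{FCNS-eq}_1$ in the usual way gives the identity
\begin{equation*}
  \frac{1}{2}\frac{d}{dt}\int\rho|u|^2+\mu\int|\nabla u|^2+(\mu+\lambda)\int|\div u|^2=\int P\,\div u,
\end{equation*}
after integrating the Laplacian, the $\nabla\div$ term, and the pressure term by parts (no boundary contribution since $u$ decays at infinity).

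Next I would handle the pressure work $\int P\,\div u$. Since $P(\tilde{\rho},\tilde{\theta})$ is a constant, $\nabla P=\nabla(P-P(\tilde{\rho},\tilde{\theta}))$, so integration by parts yields
\begin{equation*}
  \int P\,\div u=\int\bigl(P-P(\tilde{\rho},\tilde{\theta})\bigr)\,\div u=R\int\bigl[\rho(\theta-\tilde{\theta})+\tilde{\theta}(\rho-\tilde{\rho})\bigr]\,\div u.
\end{equation*}
Applying Young's inequality with small parameter $\varepsilon>0$ and using the bound $\rho\leq M^\ast$ from \eqref{assumption1} (which turns $\rho^2$ into $M^\ast\rho$), I obtain
\begin{equation*}
  \Bigl|\int P\,\div u\Bigr|\leq\varepsilon\int|\div u|^2+C_\varepsilon\int\bigl[\rho|\theta-\tilde{\theta}|^2+|\rho-\tilde{\rho}|^2\bigr].
\end{equation*}

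Finally, since the physical restriction \eqref{viscosity-condition} guarantees $\mu+\lambda\geq\mu/3>0$, I choose $\varepsilon$ small enough so that $\varepsilon\int|\div u|^2$ is absorbed into the dissipation $(\mu+\lambda)\int|\div u|^2$ on the left-hand side (or, alternatively, absorbed into $\mu\int|\nabla u|^2$ via $|\div u|^2\leq 3|\nabla u|^2$). Multiplying through by $2$ and relabelling the constant yields exactly \eqref{u-L^2-estimate}. There is no real obstacle in this argument; the only subtlety is the correct subtraction of $P(\tilde{\rho},\tilde{\theta})$ before integrating by parts so that the resulting integrand is integrable in the Cauchy setting with nontrivial far-field $\tilde{\rho},\tilde{\theta}\geq 0$.
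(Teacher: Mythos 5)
Your proof is correct and matches the paper's argument almost line for line: multiply the momentum equation by $u$, integrate by parts to get the basic energy identity, rewrite the pressure work as $\int R(\rho\theta-\tilde{\rho}\tilde{\theta})\div u$ (which is exactly $\int(P-P(\tilde{\rho},\tilde{\theta}))\div u$, decomposed as $\rho(\theta-\tilde\theta)+\tilde\theta(\rho-\tilde\rho)$), then use Young's inequality plus the uniform bound $\rho\le M^\ast$ from \eqref{assumption1} to absorb the cross term into the viscous dissipation. The only cosmetic difference is that the paper absorbs directly into $\mu\int|\nabla u|^2$ rather than explicitly invoking $\mu+\lambda>0$, but both are valid and equivalent.
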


\begin{proof}
  Multiplying $\eqref{FCNS-eq}_2$ by $u$ and integrating the resultant equation over $\Omega$, we obtain
  \begin{equation*}
    \begin{aligned}
    &\quad\frac{d}{dt}\int\rho|u|^2+2\int[\mu|\nabla u|^2+(\mu+\lambda)|\div u|^2]\\
    &=2\int R(\rho\theta-\tilde{\rho}\tilde{\theta})\div u\\
    &\leq\mu\int|\nabla u|^2+C\int\rho|\theta-\tilde{\theta}|^2+C\int|\rho-\tilde{\rho}|^2,
    \end{aligned}
  \end{equation*}
  which implies \eqref{u-L^2-estimate}. Thus, the proof of Lemma \ref{lem-u-L^2-estimate} is completed.
\end{proof}

Similar to Corollary \ref{cor-u-1rd-Dirichlet}, by combining Lemma \ref{lem-u-L^4-close}, \ref{lem-theta-rho-L^2}, \ref{lem-u-1rd-Cauchy} and \ref{lem-u-L^2-estimate}, we have the following corollary.

\begin{cor}\label{cor-u-1rd-Cauchy}
  Let $\Omega=\mathbb{R}^3$. Under the assumptions of Theorem \ref{thm-blowup-FCNS} and \eqref{assumption1}, it holds that
  \begin{equation}\label{cor-u-1rd-estimate-Cauchy}
  \sup_{t\in[0,T]}\int(\rho|u|^2+\rho|u|^4+\rho|\theta-\tilde{\theta}|^2+|\rho-\tilde{\rho}|^2+|\nabla u|^2)+\int_0^T\int(\rho|u_t|^2+|u|^2|\nabla u|^2+|\nabla\theta|^2)\leq C,
\end{equation}
for any $T\in(0,T^*)$.
\end{cor}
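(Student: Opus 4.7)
The plan is to mimic the proof of Corollary \ref{cor-u-1rd-Dirichlet}, forming a carefully weighted linear combination of the four cited lemmas so that all bad terms on the right-hand side are absorbed, and then closing with Gronwall. The new ingredient compared with the Dirichlet case is Lemma \ref{lem-u-L^2-estimate}, which is needed precisely to tame the extra term $C\int\rho|u|^2$ that appears on the right-hand side of \eqref{u-1rd-estimate-Cauchy} whenever $\tilde\theta>0$.

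First, I would add Lemma \ref{lem-u-L^2-estimate} to Lemma \ref{lem-theta-rho-L^2} with a sufficiently large multiplier $M_0>0$ in front of \eqref{u-L^2-estimate} so that the $\mu\int|\nabla u|^2$ produced absorbs the factor $C_\epsilon(1+\|\theta-\tilde\theta\|_{L^{r,\infty}}^s)\|\nabla u\|_{L^2}^2$ after one application of Gronwall; this yields a differential inequality of the form
\begin{equation*}
\frac{d}{dt}\!\int\!\bigl(\rho|u|^2 + c_v\rho|\theta-\tilde\theta|^2 + |\rho-\tilde\rho|^2\bigr)
+\kappa\!\int\!|\nabla\theta|^2 + M_0\mu\!\int\!|\nabla u|^2
\le C\Phi(t)\,\mathcal{E}(t)+\epsilon\|\sqrt\rho\dot u\|_{L^2}^2,
\end{equation*}
where $\Phi(t):=1+\|\theta-\tilde\theta\|_{L^{r,\infty}}^s$ lies in $L^1(0,T^*)$ by \eqref{assumption1} and \eqref{index-blowup1}, and $\mathcal E$ collects the 0-order quantities.

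Next, I would add Lemma \ref{lem-u-1rd-Cauchy} multiplied by a constant large enough that its dissipative term $\int\rho|u_t|^2$ swallows the $\epsilon\|\sqrt\rho\dot u\|_{L^2}^2$ produced above (using $\dot u=u_t+u\cdot\nabla u$ and absorbing $\int|u|^2|\nabla u|^2$ by the good term from Lemma \ref{lem-u-L^4-close}). Finally, depending on the sign of $\lambda$, I would add a suitably large multiple of either \eqref{u-L^4-close1} or \eqref{u-L^4-close2}: the weight must be chosen so the gained $\frac{\mu}{2}\int|u|^2|\nabla u|^2$ (resp.\ $2\mu\int|u|^2|\nabla u|^2$) dominates the $C\int|u|^2|\nabla u|^2$ coming from \eqref{u-1rd-estimate-Cauchy}; exactly as in the Dirichlet case, the multipliers should be chosen in the order $M_1$ (for the $|u|^4$ estimate) first and then $M_0$ (for the thermal/density estimate) large enough, and only then $\epsilon$ small.

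After all absorptions, one arrives at an inequality of the shape
\begin{equation*}
\frac{d}{dt}\!\int F(t) + \!\int\!\bigl(\rho|u_t|^2+|u|^2|\nabla u|^2+|\nabla\theta|^2\bigr)\le C\,\Phi(t)\,\int F(t),
\end{equation*}
with $F \sim \rho|u|^2+\rho|u|^4+\rho|\theta-\tilde\theta|^2+|\rho-\tilde\rho|^2+|\nabla u|^2$. Since $\Phi\in L^1(0,T^*)$, Gronwall's inequality delivers \eqref{cor-u-1rd-estimate-Cauchy}. The main subtlety I anticipate is tracking the order in which the weights $M_0,M_1$ and the smallness parameter $\epsilon$ must be fixed: one must choose the $\rho|u|^4$-weight first so as to control the $\int|u|^2|\nabla u|^2$ term in \eqref{u-1rd-estimate-Cauchy}, then the thermal/density weight large enough to dominate the $\|\nabla u\|_{L^2}^2$ factor in \eqref{theta-rho-L^2-estimate}, then the $L^2$-velocity weight $M_0$ to kill the $\int\rho|u|^2$ term, and only at the very end fix $\epsilon$ small enough to absorb the $\sqrt\rho\dot u$ terms into the dissipation $\int\rho|u_t|^2$ after rewriting $\dot u=u_t+u\cdot\nabla u$ via the already-controlled $\int|u|^2|\nabla u|^2$.
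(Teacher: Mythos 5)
Your proposal takes the same route as the paper: the paper's ``proof'' of this corollary is simply the one-line directive to combine Lemmas~\ref{lem-u-L^4-close}, \ref{lem-theta-rho-L^2}, \ref{lem-u-1rd-Cauchy} and~\ref{lem-u-L^2-estimate} in the manner of Corollary~\ref{cor-u-1rd-Dirichlet}, and that is exactly the linear-combination-plus-Gronwall argument you reconstruct, with Lemma~\ref{lem-u-L^2-estimate} added precisely so that $\int\rho|u|^2$ enters the Gronwall functional $F$ (needed since $\tilde\theta$ may be positive). The strategy and the final estimate are correct.

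Two small imprecisions in the exposition are worth flagging so you don't mislead yourself about what forces what. First, the multiplier $M_0$ on Lemma~\ref{lem-u-L^2-estimate} need not be large: the term $\Phi(t)\|\nabla u\|_{L^2}^2$ from \eqref{theta-rho-L^2-estimate} is handled by Gronwall because $|\nabla u|^2$ sits inside $F$ via Lemma~\ref{lem-u-1rd-Cauchy}, not because $M_0\mu\int|\nabla u|^2$ ``absorbs'' it (a constant-coefficient dissipation cannot absorb a time-dependent-coefficient source). Taking $M_0=1$ is fine; its only job is to introduce $\rho|u|^2$ into $F$ so that the $C\int\rho|u|^2$ and the $\Phi(t)\|\sqrt\rho u\|_{L^2}^2$ terms on the right of Lemmas~\ref{lem-u-1rd-Cauchy} and~\ref{lem-u-L^4-close} become Gronwall terms. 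Second, multiplying Lemma~\ref{lem-u-1rd-Cauchy} by a large constant cannot make $\int\rho|u_t|^2$ dominate the $\epsilon$-terms, since its bad contribution $C\int|u|^2|\nabla u|^2$ scales by the same factor; what actually closes the loop is fixing the multipliers in the order $M_1$ (to overwhelm the $C\int|u|^2|\nabla u|^2$ term, with $M_1\mu>2C$), then $M$ (to keep $F$ coercive against the cross terms $-2R(\rho\theta-\tilde\rho\tilde\theta)\div u$ and $-\tfrac{4Cc_v}{\lambda}M_1\rho|u|^2(\theta-\tilde\theta)$ and to dominate the $\nabla\theta$ contributions), and only then $\epsilon>0$ small so that the accumulated $M\epsilon\|\sqrt\rho\dot u\|_{L^2}^2$ and $M_1C\epsilon\int\rho|u_t|^2$ are absorbed by $\int\rho|u_t|^2$. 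You state this order correctly in your final paragraph, which supersedes the looser phrasing above it; with that understanding the argument is sound and matches the paper.
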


Next, we  modify the proof of Lemma \ref{lem-u-1rd-Cauchy} for the case $\Omega=\mathbb{R}^3$ to get the following lemma for bounded domain $\Omega$ with Navier-slip boundary condition \eqref{Navier-slip-condition}.

\begin{lem}\label{lem-u-1rd-Navier}
  Let $\Omega$ be bounded with Navier-slip boundary condition \eqref{Navier-slip-condition}. Under the assumptions of Theorem \ref{thm-blowup-FCNS} and \eqref{assumption1}, we have
  \begin{equation}\label{u-1rd-estimate-Navier}
    \begin{aligned}
    &\quad\frac{d}{dt}\int[\mu|\curl u|^2-2R(\rho\theta-\tilde{\rho}\tilde{\theta})\div u+(2\mu+\lambda)|\div u|^2+\frac{1}{2\mu+\lambda}|R(\rho\theta-\tilde{\rho}\tilde{\theta})|^2]+\int\rho|u_t|^2\\
    &\leq C\left(1+\norm{\theta-\tilde{\theta}}_{L^{r,\infty}}^s\right)\int\left(\rho|u|^4+\rho|\theta-\tilde{\theta}|^2\right)+C\int\left(|\rho-\tilde{\rho}|^2+|\nabla u|^2\right)\\
    &\qquad+C\int|\nabla\theta|^2+C\int|u|^2|\nabla u|^2+C.
    \end{aligned}
  \end{equation}
\end{lem}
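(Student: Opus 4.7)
The proof plan will closely parallel that of Lemma \ref{lem-u-1rd-Cauchy}, with modifications tailored to the bounded domain and the Navier-slip boundary. First I would test $\eqref{FCNS-eq}_2$ against $u_t$, but instead of integrating $\Delta u$ directly, I would rewrite the viscous block through the identity $\mu\Delta u+(\mu+\lambda)\nabla\div u=(2\mu+\lambda)\nabla\div u-\mu\nabla\times\curl u$ before integrating by parts. Since $u\cdot n=0$ forces $u_t\cdot n=0$, the boundary contribution from $\nabla\div u$ disappears, while $\curl u\times n=0$ kills the one from $\nabla\times\curl u$. The outcome is the clean identity
\begin{equation*}
\frac{1}{2}\frac{d}{dt}\int\left[\mu|\curl u|^2+(2\mu+\lambda)|\div u|^2\right]+\int\rho|u_t|^2=-\int\nabla P\cdot u_t-\int\rho u\cdot\nabla u\cdot u_t,
\end{equation*}
which explains why $\mu|\curl u|^2$ and $(2\mu+\lambda)|\div u|^2$ appear as weights in \eqref{u-1rd-estimate-Navier} in place of the $\mu|\nabla u|^2$ and $(\mu+\lambda)|\div u|^2$ of the Cauchy version. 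The convection term is then bounded by $\frac14\int\rho|u_t|^2+C\int|u|^2|\nabla u|^2$, and the pressure term is treated as in \eqref{u-1rd-estimate-Cauchy1}: integration by parts yields $-\int\nabla P\cdot u_t=\frac{d}{dt}\int R(\rho\theta-\tilde\rho\tilde\theta)\div u-\int P_t\div u$, and substituting $\div u=\frac{1}{2\mu+\lambda}(G+R(\rho\theta-\tilde\rho\tilde\theta))$ peels off the exact-derivative term $-\frac{1}{2(2\mu+\lambda)}\frac{d}{dt}\int|R(\rho\theta-\tilde\rho\tilde\theta)|^2$ and leaves $-\frac{1}{2\mu+\lambda}\int P_t G$.

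The crux is controlling $-\int P_tG$. Here I would insert the identity \eqref{P-t-eq} for $P_t$ and integrate by parts, mimicking \eqref{u-1rd-estimate-Cauchy2}. The boundary contributions produced by $\div[c_v(\rho\theta-\tilde\rho\tilde\theta)u]$ and by $\kappa\Delta\theta$ both vanish, thanks to $u\cdot n=0$ and $\partial_n\theta=0$ on $\partial\Omega$ from \eqref{Navier-slip-condition}. The one genuinely new feature is that, in the step that converts $-\int(2\mu|\mathcal{D}(u)|^2+\lambda(\div u)^2)G$ into $\int\rho\dot u\cdot uG$ plus $\nabla G$-terms by means of the momentum equation, a boundary integral of the form $\int_{\partial\Omega}2\mu(u\cdot\mathcal{D}(u)n)G$ survives. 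Using the constraints $u\cdot n=0$ and $\curl u\times n=0$, the quantity $u\cdot\mathcal{D}(u)n$ on $\partial\Omega$ reduces to a curvature-driven expression bounded pointwise by $C|u|^2$, so the boundary contribution is estimated by $C\int_{\partial\Omega}|u|^2|G|$. The trace inequality \eqref{trace-inequality} combined with $\|\nabla G\|_{L^2}\lesssim\|\sqrt\rho\dot u\|_{L^2}$ from \eqref{GW-estimate2} then absorbs this into $C\epsilon\|\sqrt\rho\dot u\|_{L^2}^2+C\|\nabla u\|_{L^2}^2+C$, which is precisely the origin of the additional $C\int|\nabla u|^2+C$ on the right-hand side of \eqref{u-1rd-estimate-Navier}.

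The remaining bulk terms $\int\rho^2|\theta-\tilde\theta|^2|u|^2$, $\int\rho|u|^2|G|^2$, $\int|\nabla\theta||\nabla G|$, and their siblings will be handled by the Lorentz-space H\"older and interpolation inequalities \eqref{Lorentz-Holder}--\eqref{Lorentz-interpolation}, exactly as in \eqref{L^4-estimate-3-1}--\eqref{L^4-estimate-3-2} and \eqref{u-1rd-estimate5-1}--\eqref{u-1rd-estimate5-2}, with $G$ playing the role formerly held by $\nabla h$ and its gradient controlled by \eqref{GW-estimate2}. Low-order $L^2$-norms of $u$ and $\theta-\tilde\theta$ are treated by \eqref{L2-control-lem}, available thanks to the mass conservation $\int\rho=M_0>0$ on the bounded domain. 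Finally, choosing $\epsilon>0$ small enough to absorb $\int\rho|u_t|^2$ back into the left-hand side yields \eqref{u-1rd-estimate-Navier}. The principal obstacle throughout is the single boundary integral produced by the $\mathcal{D}(u)$ manipulation; once neutralized by the second-fundamental-form reduction and the trace inequality, the rest of the argument is essentially a transcription of the Cauchy-case proof.
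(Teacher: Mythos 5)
Your proposal tracks the paper's argument very closely: the Navier-friendly integration by parts giving the $\mu|\curl u|^2+(2\mu+\lambda)|\div u|^2$ energy, the effective-viscous-flux manipulation producing $-\frac{1}{2\mu+\lambda}\int P_tG$, the identification of the surviving boundary term $-\int_{\pa\Omega}2\mu\,\mathcal{D}(u)n\cdot uG$, and the curvature reduction $(\mathcal{D}(u)n)_{\tau}=-\kappa_{\tau}u_{\tau}$ followed by the trace inequality are exactly what the paper does. Two small imprecisions are worth flagging. First, your stated bound for the boundary term, $C\epsilon\|\sqrt\rho\dot u\|_{L^2}^2+C\|\nabla u\|_{L^2}^2+C$, is too clean: after the trace inequality one must control both $\|G\|_{H^1}$ and $\||u|^2\|_{H^1}$, and the latter produces a term $C\int|u|^2|\nabla u|^2$ while $\|G\|_{L^2}^2$ produces $C\int\rho|\theta-\tilde\theta|^2$, $C\int|\rho-\tilde\rho|^2$; these do appear on the right-hand side of the lemma, so the conclusion is unaffected, but the attribution of \emph{only} $C\int|\nabla u|^2+C$ to this estimate is not accurate. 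Second, and more substantively, the Cauchy-case estimate \eqref{u-1rd-estimate-Cauchy3} carries a term $\int\rho|u|^2$ that is kept as such in \eqref{u-1rd-estimate-Cauchy}, whereas on the bounded domain this must be eliminated to match the stated right-hand side of \eqref{u-1rd-estimate-Navier}. The paper does this with the elementary observation $\int\rho|u|^2\leq\|\sqrt\rho|u|^2\|_{L^2}\|\sqrt\rho\|_{L^2}\leq C\bigl(1+\int\rho|u|^4\bigr)$, using mass conservation and Cauchy--Schwarz; this is in fact the principal source of the bare $+C$ on the right-hand side. Your proposal attributes the resolution of low-order $L^2$-norms to Lemma~\ref{lemma-Feireisl} (via \eqref{L2-control-lem}), but that lemma only gives $\|u\|_{L^2}^2\lesssim\|\nabla u\|_{L^2}^2+\int\rho|u|^2$ and so cannot be used to bound $\int\rho|u|^2$ without circularity. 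This is a genuine (though small and easily repaired) gap in the proposal.
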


\begin{proof}
  If $\Omega$ is bounded with Navier-slip boundary condition, the equality \eqref{u-1rd-estimate-Cauchy2} will involve a boundary term as
  \begin{equation}\label{u-1rd-boundary}
    -\int_{\pa\Omega}2\mu\mathcal{D}(u)n\cdot uG.
  \end{equation}
  Recalling that
  \begin{equation*}
    u\cdot n=0,\quad \curl u\times n=0 \textrm{ on }\pa\Omega,
  \end{equation*}
  which is equivalent to (see \cite{Cai-Li2023})
  \begin{equation*}
    u\cdot n=0,\quad (\mathcal{D}(u)n)_{\tau}=-\kappa_{\tau}u_{\tau} \textrm{ on }\pa\Omega,
  \end{equation*}
  where $\kappa_{\tau}$ is the corresponding principal curvature of $\pa\Omega$ and $u_{\tau}$ represents the projection of tangent plane of the vector $u$ on $\pa\Omega$. Then, we have from \eqref{trace-inequality}, \eqref{GW-estimate2} and \eqref{L2-control-lem} that
  \begin{equation}\label{u-1rd-estimate-boundary}
    \begin{aligned}
    -\int_{\pa\Omega}2\mu\mathcal{D}(u)n\cdot uG&=-\int_{\pa\Omega}2\mu(\mathcal{D}(u)n)_{\tau}\cdot u_{\tau}G=\int_{\pa\Omega}2\mu\kappa_{\tau}|u_{\tau}|^2G\\
    &\leq C\norm{G}_{L^2(\pa\Omega)}\norm{|u|^2}_{L^2(\pa\Omega)}
    \lesssim\norm{G}_{L^2}^{\frac{1}{2}}\norm{G}_{H^1}^{\frac{1}{2}}\norm{|u|^2}_{L^2}^{\frac{1}{2}}\norm{|u|^2}_{H^1}^{\frac{1}{2}}\\
    &\leq\epsilon\norm{\nabla G}_{L^2}^2+\epsilon\norm{u|\nabla u|}_{L^2}^2+C_{\epsilon}(\norm{G}_{L^2}^2+\norm{|u|^2}_{L^2}^2)\\
    &\leq C\epsilon\int\rho|u_t|^2+C_{\epsilon}\int\left(|\nabla u|^2+\rho|\theta-\tilde{\theta}|^2+|\rho-\tilde{\rho}|^2+\rho|u|^4+|u|^2|\nabla u|^2\right).
    \end{aligned}
  \end{equation}
 Next, we deal with $\displaystyle\int\rho|u|^2$ in the inequality \eqref{u-1rd-estimate-Cauchy3} as follows:
  \begin{equation}\label{u-L^2-Navier-control}
    \int\rho|u|^2\leq\norm{\sqrt{\rho}|u|^2}_{L^2}\norm{\sqrt{\rho}}_{L^2}\leq C\left(1+\int\rho|u|^4\right).
  \end{equation}
  Thus, following the procedure in Lemma \ref{lem-u-1rd-Cauchy}, we deduce \eqref{u-1rd-estimate-Navier}, and thus complete the
  proof of Lemma \ref{lem-u-1rd-Navier}.
\end{proof}

The following corollary can be deduced from Lemma \ref{lem-u-L^4-close}, \ref{lem-theta-rho-L^2} and \ref{lem-u-1rd-Navier} immediately just as the same in Corollary \ref{cor-u-1rd-Dirichlet}.
\begin{cor}\label{cor-u-1rd-Navier}
  Let $\Omega$ be bounded with Navier-slip boundary condition \eqref{Navier-slip-condition}. Under the assumptions of Theorem \ref{thm-blowup-FCNS} and \eqref{assumption1}, it holds that
  \begin{equation}\label{cor-u-1rd-estimate-Navier}
  \sup_{t\in[0,T]}\int(\rho|u|^4+\rho|\theta-\tilde{\theta}|^2+|\rho-\tilde{\rho}|^2+|\nabla u|^2)+\int_0^T\int(\rho|u_t|^2+|u|^2|\nabla u|^2+|\nabla\theta|^2)\leq C,
\end{equation}
for any $T\in(0,T^*)$.
\end{cor}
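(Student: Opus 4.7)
The plan is to follow the combining strategy already used for Corollary \ref{cor-u-1rd-Dirichlet}, replacing Lemma \ref{lem-u-1rd-Dirichlet} by Lemma \ref{lem-u-1rd-Navier}. A preliminary observation is that Lemma \ref{lem-u-1rd-Navier} dissipates $|\curl u|^2+|\div u|^2$ rather than $|\nabla u|^2$ directly, but since $u\cdot n=0$ on $\pa\Omega$ the Hodge decomposition \eqref{Hodge-decomposition2} gives $\|\nabla u\|_{L^2}^2 \sim \|\curl u\|_{L^2}^2+\|\div u\|_{L^2}^2$, so no information is lost.

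First, I would multiply \eqref{theta-rho-L^2-estimate} by a large constant $M>0$ and add the result to \eqref{u-1rd-estimate-Navier}. Choosing $\epsilon$ small enough so that the $\epsilon\|\sqrt{\rho}\dot u\|_{L^2}^2$ term on the right of \eqref{theta-rho-L^2-estimate} is absorbed into $\int\rho|u_t|^2$ (after writing $\dot u=u_t+u\cdot\nabla u$ and controlling the convective piece by $\|u|\nabla u|\|_{L^2}^2$), this yields a preliminary inequality
\[
\frac{d}{dt}\int F_1+\int\rho|u_t|^2+M\int|\nabla\theta|^2\le C(1+\|\theta-\tilde\theta\|_{L^{r,\infty}}^s)\bigl(\|\sqrt\rho|u|^2\|_{L^2}^2+\|\sqrt\rho(\theta-\tilde\theta)\|_{L^2}^2+\|\rho-\tilde\rho\|_{L^2}^2+\|\nabla u\|_{L^2}^2+1\bigr)+C\|u|\nabla u|\|_{L^2}^2,
\]
with $F_1\sim|\curl u|^2+|\div u|^2+\tfrac{1}{2\mu+\lambda}|R(\rho\theta-\tilde\rho\tilde\theta)|^2+M(\rho|\theta-\tilde\theta|^2+|\rho-\tilde\rho|^2)$; Young's inequality on the cross term $-2R(\rho\theta-\tilde\rho\tilde\theta)\div u$ keeps $F_1$ nonnegative.

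Second, to absorb the leftover $C\|u|\nabla u|\|_{L^2}^2$, I would add $M_1$ times \eqref{u-L^4-close1} when $\lambda>0$, or $M_2$ times \eqref{u-L^4-close2} when $\lambda\le 0$, with $M_1$ (resp.\ $M_2$) chosen so that $\tfrac{\mu M_1}{2}>2C$, and then $M$ taken large enough that $\sqrt{M M_1}\gtrsim \tfrac{4Cc_v}{\lambda}M_1$ so that the combined functional $F_2\sim|\nabla u|^2+\rho|\theta-\tilde\theta|^2+|\rho-\tilde\rho|^2+\rho|u|^4$ remains coercive. The result is the closed inequality
\[
\frac{d}{dt}\int F_2+\int\rho|u_t|^2+\int|\nabla\theta|^2+\int|u|^2|\nabla u|^2\le C(1+\|\theta-\tilde\theta\|_{L^{r,\infty}}^s)\Bigl(\int F_2+1\Bigr),
\]
where the $\|\sqrt\rho u\|_{L^2}^2$ term appearing in \eqref{u-L^4-close1}--\eqref{u-L^4-close2} is controlled via \eqref{u-L^2-Navier-control} by $1+\int\rho|u|^4$ (using mass conservation $\int\rho=M_0$).

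Third, I would invoke Gronwall's inequality. Since $\|\theta-\tilde\theta\|_{L^s(0,T;L^r_w)}\le M^*$ by \eqref{assumption1}, the integrating factor $\exp\bigl(C\int_0^T(1+\|\theta-\tilde\theta\|_{L^{r,\infty}}^s)\,dt\bigr)$ is bounded uniformly on $(0,T^*)$, and the initial value $\int F_2(0)$ is finite from the hypotheses on $(\rho_0,u_0,\theta_0)$; this delivers \eqref{cor-u-1rd-estimate-Navier}. The main obstacle I anticipate is not analytic but bookkeeping: one must pick the constants in the order $M_1$ (or $M_2$) first and then $M$, so as to simultaneously dominate the cross terms $\rho|u|^2(\theta-\tilde\theta)$, $R(\rho\theta-\tilde\rho\tilde\theta)\div u$, and $\rho|u|^2\int\div h$-type expressions that linked $F_1$ to the full $F_2$ in the Dirichlet case; the Navier-slip version replaces those $h$-terms by the boundary contribution already absorbed inside \eqref{u-1rd-estimate-boundary}, so the combinatorics is slightly cleaner but the ordering of the constants is identical.
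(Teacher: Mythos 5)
Your proposal is correct and follows essentially the same route as the paper: the authors state that Corollary \ref{cor-u-1rd-Navier} "can be deduced from Lemma \ref{lem-u-L^4-close}, \ref{lem-theta-rho-L^2} and \ref{lem-u-1rd-Navier} immediately just as the same in Corollary \ref{cor-u-1rd-Dirichlet}," which is exactly the combination-and-Gronwall strategy you describe, including the order in which $M_1$ (or $M_2$) and $M$ are chosen, the use of \eqref{Hodge-decomposition2} to recover $\|\nabla u\|_{L^2}^2$ from $\|\curl u\|_{L^2}^2+\|\div u\|_{L^2}^2$, and the control of $\|\sqrt{\rho}u\|_{L^2}^2$ via \eqref{u-L^2-Navier-control} and mass conservation.
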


\subsection{Proof of Theorem \ref{thm-blowup-FCNS}.} With the key 0-order and 1-order energy estimates in hand, we can modify the methods of
\cite{Feireisl2024} to get the crucial uniform energy estimate \eqref{goal-inequality}
for three different cases. It should be mentioned that for the bounded domain $\Omega$ with Navier-slip boundary condition \eqref{Navier-slip-condition}, we need to deal with some trouble boundary terms similar as \eqref{u-1rd-boundary} carefully. Since this process is standard and can be found in \cite{Cai-Li2023} for some new ingredients, we omit the details here.
Considering the available local existence results stated in Remark \ref{rem-local-existence}, it is easy to verify that \eqref{goal-inequality} indicates the strong solution can be extended beyond $T^*$ (see \cite{Huang-Li-Wang2013} for instance), which means $T^*$ is not the maximal existence time, and thus leads to a contradiction.
Therefore, we complete the proof of Theorem \ref{thm-blowup-FCNS}.

\section{Proof of Theorem \ref{thm-blowup-FCNS-velocity}}\label{sect-proof-theorem-FCNS-u}
Assume that $T^*<\infty$ and there exist constants $r\in(3,\infty]$, $s\in[2,\infty]$ satisfying \eqref{index-blowup2} such that
\begin{equation}\label{assumption2}
  \norm{\rho}_{L^{\infty}(0,T;L^{\infty})}+\norm{u}_{L^s(0,T;L^r_w)}\leq M^*<\infty,
\end{equation}
for any $T\in(0,T^*)$.

Then, we derive the following key estimate on $\displaystyle\int\rho|\theta-\tilde{\theta}|^2$.

\begin{lem}\label{lem-theta-L^2-blowup-u}
  Under the assumptions of Theorem \ref{thm-blowup-FCNS-velocity} and \eqref{assumption2}, it holds that for any small $\epsilon_1>0$,
  \begin{equation}\label{theta-L^2-estimate-blowup-u}
  \begin{aligned}
  &\quad\frac{d}{dt}\int c_v\rho|\theta-\tilde{\theta}|^2+\kappa\int|\nabla\theta|^2\\
  &\leq C_{\epsilon_1}\left(1+\norm{u}_{L^{r,\infty}}^s\right)\left(\norm{\nabla u}_{L^2}^2+\norm{\sqrt{\rho}u}_{L^2}^2+\norm{\sqrt{\rho}(\theta-\tilde{\theta})}_{L^2}^2+\norm{\rho-\tilde{\rho}}_{L^2}^2+1\right)
  +\epsilon_1\norm{\sqrt{\rho}\dot{u}}_{L^2}^2.
  \end{aligned}
  \end{equation}

\end{lem}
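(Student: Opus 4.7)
I would test the temperature equation $\eqref{FCNS-eq}$ by $\vartheta:=\theta-\tilde\theta$, use the continuity equation to regroup the material-derivative term, and integrate by parts on the diffusion. As in the proof of Lemma \ref{lem-theta-rho-L^2}, this yields the energy identity
\begin{equation*}
\frac{c_v}{2}\frac{d}{dt}\int\rho\vartheta^2+\kappa\int|\nabla\theta|^2=-R\int\rho\theta\vartheta\div u+\int\vartheta\bigl(2\mu|\mathcal{D}(u)|^2+\lambda(\div u)^2\bigr).
\end{equation*}
Writing $\rho\theta=\rho\vartheta+\tilde\theta\rho$, the linear pressure piece $-R\tilde\theta\int\rho\vartheta\div u$ is bounded immediately by Cauchy--Schwarz (using $\|\rho\|_{L^\infty}\leq M^*$), giving a contribution $\leq\epsilon\|\nabla u\|_{L^2}^2+C_\epsilon\|\sqrt\rho\vartheta\|_{L^2}^2$.

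The essential new step is to control the combined quadratic pressure work $-R\int\rho\vartheta^2\div u$ and viscous heating $\int\vartheta(2\mu|\mathcal{D}(u)|^2+\lambda(\div u)^2)$ using only the Serrin norm of $u$ (since the Serrin-on-$\theta$ strategy of Lemma \ref{lem-theta-rho-L^2} is unavailable here). My plan is to merge them via the effective-viscous-flux identity $R\rho\theta=(2\mu+\lambda)\div u-G+R\tilde\rho\tilde\theta$, which recasts their sum as
\begin{equation*}
2\mu\int\vartheta\bigl[|\mathcal{D}(u)|^2-(\div u)^2\bigr]+\int G\vartheta\div u-R\tilde\rho\tilde\theta\int\vartheta\div u,
\end{equation*}
so that the $G$-term is absorbed by $\epsilon_1\|\sqrt\rho\dot u\|_{L^2}^2$ via the elliptic bound $\|G\|_{L^6}\lesssim\|\nabla G\|_{L^2}\lesssim\|\sqrt\rho\dot u\|_{L^2}$ from \eqref{GW-estimate1}--\eqref{GW-estimate2} together with Young's inequality. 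The remaining pointwise $\lesssim|\vartheta||\nabla u|^2$ piece is further reduced, via \eqref{Hodge-decomposition1}--\eqref{Hodge-decomposition2}, to integrals of $|\vartheta|$ against $|G|^2+|\omega|^2+|P-P(\tilde\rho,\tilde\theta)|^2$; the cubic pressure remainder $\int\rho|\vartheta|^3$ (having no definite sign) is handled by splitting $\vartheta=\vartheta_+-\vartheta_-$: since $\theta\geq0$, we have $\vartheta_-\leq\tilde\theta$ pointwise, so the $\vartheta_-$-contribution is $\lesssim\|\sqrt\rho\vartheta\|_{L^2}^2$, while the $\vartheta_+$-contribution produces a non-positive damping term. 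The Serrin norm of $u$ itself enters through the integration-by-parts identity $\int\vartheta^2\div u=-2\int\vartheta u\cdot\nabla\theta$ (valid under all three boundary conditions, since $u\cdot n=0$ or $u$ decays at infinity), combined with Lorentz--H\"older \eqref{Lorentz-Holder} and interpolation \eqref{Lorentz-interpolation}:
\begin{equation*}
\left|\int\rho\vartheta\,u\cdot\nabla\theta\right|\lesssim\|u\|_{L^{r,\infty}}\|\sqrt\rho\vartheta\|_{L^2}^{1-3/r}\|\nabla\theta\|_{L^2}^{1+3/r},
\end{equation*}
using the sharp embedding $H^1\hookrightarrow L^{6,2}$ together with $\|\sqrt\rho\vartheta\|_{L^{6,2}}\lesssim\sqrt{M^*}\|\nabla\theta\|_{L^2}$. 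Young's inequality with the exponent $s=2/(1-3/r)$---exactly the Serrin condition \eqref{index-blowup2}---then delivers the desired form $\epsilon\|\nabla\theta\|_{L^2}^2+C_\epsilon\|u\|_{L^{r,\infty}}^s\|\sqrt\rho\vartheta\|_{L^2}^2$.

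The main obstacle I expect is Young's-inequality bookkeeping. Intermediate estimates naturally produce three- and four-factor products involving $\|\sqrt\rho\dot u\|_{L^2}$, $\|\sqrt\rho\vartheta\|_{L^2}$, $\|\nabla\theta\|_{L^2}$, and $\|\nabla u\|_{L^2}$, and the Young exponents must be chosen so that $\|\sqrt\rho\dot u\|_{L^2}^2$ carries only the small coefficient $\epsilon_1$, $\|\nabla\theta\|_{L^2}^2$ is absorbed by the LHS dissipation, and the residual $\|\sqrt\rho\vartheta\|_{L^2}$ stays at exactly power two (never higher), multiplied by the Serrin factor $1+\|u\|_{L^{r,\infty}}^s$. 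The Cauchy problem with nonzero far-field data forces the auxiliary terms $\|\rho-\tilde\rho\|_{L^2}^2$ and $\|\sqrt\rho u\|_{L^2}^2$ on the right-hand side of the lemma to be used in absorbing the contributions arising from the $\tilde\rho$- and $\tilde\theta$-corrections. For the Navier-slip case, as in Lemma \ref{lem-u-1rd-Navier}, boundary terms appearing in the various integrations by parts must additionally be controlled via the trace inequality \eqref{trace-inequality}.
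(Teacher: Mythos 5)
Your outline diverges at the decisive step from what the paper does, and the divergence is where the gap opens. The paper's entire handle on this lemma is the integration-by-parts identity
\begin{equation*}
\int\bigl(2\mu|\mathcal{D}(u)|^2+\lambda|\div u|^2-P\div u\bigr)(\theta-\tilde{\theta})
=-\int\rho\dot{u}\cdot u(\theta-\tilde{\theta})-\int 2\mu\mathcal{D}(u):\nabla\theta\otimes u-\int(\lambda\div u-P)\,u\cdot\nabla\theta,
\end{equation*}
which recognizes the stress divergence inside the viscous heating as the left-hand side of the momentum equation and moves all derivatives off the factor of $u$. After this, every term carries an explicit undifferentiated factor of $u$, so the Serrin norm $\|u\|_{L^{r,\infty}}$ (rather than any norm of $\theta$) can be extracted and the remainder estimated by $\|\sqrt{\rho}\dot{u}\|_{L^2}$, $\|\nabla\theta\|_{L^2}$, $\int\rho|u|^2|\theta-\tilde\theta|^2$ and $\int|u|^2|\nabla u|^2$. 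Your effective-flux substitution $R\rho\theta=(2\mu+\lambda)\div u-G+R\tilde\rho\tilde\theta$ does not achieve this: it leaves the term $2\mu\int(\theta-\tilde\theta)\bigl[|\mathcal{D}(u)|^2-(\div u)^2\bigr]$, which is still a bare $\vartheta|\nabla u|^2$ integral with no factor of $|u|$ to hang the Serrin norm on.

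Your proposed way out of that leftover term does not hold up. Inequalities \eqref{Hodge-decomposition1}--\eqref{Hodge-decomposition2} are $L^p$ estimates, not pointwise estimates, so the claimed reduction of $\int|\vartheta||\nabla u|^2$ to ``integrals of $|\vartheta|$ against $|G|^2+|\omega|^2+|P-P(\tilde\rho,\tilde\theta)|^2$'' is not a valid step: you cannot pull the weight $|\vartheta|$ inside a Hodge-type norm bound. If one instead applies H\"older and then Hodge, the exponents one is forced into produce powers of $\|\nabla u\|_{L^2}$ strictly higher than $2$ (not Gronwall-compatible), precisely because there is no factor of $|u|$ to soak up one of the Lebesgue exponents. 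This is the structural reason the paper routes through $\rho\dot u\cdot u$ rather than through $G$. Separately, your identity $\int\vartheta^2\div u=-2\int\vartheta u\cdot\nabla\theta$ is correct but applies to $\int\vartheta^2\div u$, not to the actual term $\int\rho\vartheta^2\div u$, which carries a $\rho$ and would generate a $\nabla\rho$ upon integration by parts; and a cubic term $\int\rho|\vartheta|^3$ does not in fact arise in this lemma's proof (it appears in Lemma \ref{lem-theta-rho-L^2}, where the $\theta$-Serrin norm is available to handle it). Once you replace the $G$-substitution step with the paper's IBP identity, the rest of your Lorentz--H\"older/interpolation and Young bookkeeping is essentially the right machinery and matches the paper's treatment of $\int\rho|u|^2\vartheta^2$ and $\int|u|^2|\nabla u|^2$.
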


\begin{proof}
  Multiplying $\eqref{FCNS-eq}_3$ by $\theta-\tilde{\theta}$ and integrating the resultant equation over $\Omega$, we have
  \begin{equation}\label{theta-L^2-estimate-blowup-u1}
    \begin{aligned}
    &\quad\frac{1}{2}\frac{d}{dt}\int c_v\rho|\theta-\tilde{\theta}|^2+\kappa\int|\nabla\theta|^2\\
    &=\int(2\mu|\mathcal{D}(u)|^2+\lambda|\div u|^2-P\div u)(\theta-\tilde{\theta})\\
    &=-\int(\mu\Delta u+(\mu+\lambda)\nabla\div u-\nabla P)\cdot u(\theta-\tilde{\theta})-\int2\mu\mathcal{D}(u):\nabla\theta\otimes u-\int(\lambda\div u-P)u\cdot\nabla\theta\\
    &=-\int\rho\dot{u}\cdot u(\theta-\tilde{\theta})-\int2\mu\mathcal{D}(u):\nabla\theta\otimes u-\int\lambda\div uu\cdot\nabla\theta+\int [R\rho(\theta-\tilde{\theta})+R\tilde{\theta}\rho]u\cdot\nabla\theta\\
    &\leq\epsilon\int\rho|\dot{u}|^2+\epsilon\int|\nabla\theta|^2
    +C_{\epsilon}\int\left((\rho+\rho^2)|u|^2|\theta-\tilde{\theta}|^2+|u|^2|\nabla u|^2\right)+C_{\epsilon}\tilde{\theta}\int\rho^2|u|^2\\
    &\leq\epsilon\int\rho|\dot{u}|^2+\epsilon\int|\nabla\theta|^2
    +C_{\epsilon}\int\left(\rho|u|^2|\theta-\tilde{\theta}|^2+|u|^2|\nabla u|^2\right)+C_{\epsilon}\tilde{\theta}\int\rho|u|^2.
    \end{aligned}
  \end{equation}
  For the Navier-slip boundary condition \eqref{Navier-slip-condition}, there exists a boundary term in \eqref{theta-L^2-estimate-blowup-u1} as
  \begin{equation}\label{boundary-term1}
    \begin{aligned}
    \int_{\pa\Omega}2\mu\mathcal{D}(u)n\cdot u(\theta-\tilde{\theta})&=-\int_{\pa\Omega}2\mu\kappa_{\tau}|u_{\tau}|^2(\theta-\tilde{\theta})
    \lesssim\norm{\theta-\tilde{\theta}}_{L^2(\pa\Omega)}\norm{|u|^2}_{L^2(\pa\Omega)}\\
    &\lesssim\norm{\theta-\tilde{\theta}}_{L^2}^{\frac{1}{2}}\norm{\theta-\tilde{\theta}}_{H^1}^{\frac{1}{2}}
    \norm{|u|^2}_{L^2}^{\frac{1}{2}}\norm{|u|^2}_{H^1}^{\frac{1}{2}}\\
    &\leq\epsilon\left(\norm{\nabla\theta}_{L^2}^2+\norm{u|\nabla u|}_{L^2}^2\right)+C_{\epsilon}\left
    (\norm{\sqrt{\rho}(\theta-\tilde{\theta})}_{L^2}^2+\norm{|u|^2}_{L^2}^2\right),
    \end{aligned}
  \end{equation}
  where we have used the similar argument as in \eqref{u-1rd-estimate-boundary}. Here, we need to bound $\displaystyle\int|u|^4$ on bounded domain $\Omega$ with Navier-slip boundary condition \eqref{Navier-slip-condition}.\\
  Noticing that if $\tilde{\theta}=0$, there is no need to deal with $\displaystyle\int\rho|u|^2$ just as in \cite{Huang-Li-Wang2013}.
We first deal with $I_1:=\displaystyle\int\rho|u|^2|\theta-\tilde{\theta}|^2$. There exist two cases for the index of blowup criterion \eqref{index-blowup2} as follows:
  \begin{itemize}
    \item If $r=\infty$, then $L^{\infty}_w=L^{\infty}$. By virtue of \eqref{assumption2}, we have
    \begin{equation}\label{theta-L^2-estimate-blowup-u2-1}
      I_1\lesssim\norm{u}_{L^{\infty}}^2\int\rho|\theta-\tilde{\theta}|^2\lesssim(\norm{u}_{L^{\infty}}^s+1)\int\rho|\theta-\tilde{\theta}|^2.
    \end{equation}
    \item If $\frac{2}{s}+\frac{3}{r}\leq1$ with $r\in(3,\infty)$, by \eqref{assumption2}, \eqref{Lorentz-Holder} and \eqref{Lorentz-interpolation}, we get
    \begin{equation}\label{theta-L^2-estimate-blowup-u2-2}
    \begin{aligned}
      I_1&\lesssim\norm{u}_{L^{r,\infty}}^2\norm{\sqrt{\rho}(\theta-\tilde{\theta})}_{L^{\frac{2r}{r-2},2}}^2\\
      &\lesssim\norm{u}_{L^{r,\infty}}^2\norm{\sqrt{\rho}(\theta-\tilde{\theta})}_{L^{\frac{2r_1}{r_1-2}}}\norm{\sqrt{\rho}(\theta-\tilde{\theta})}_{L^{\frac{2r_2}{r_2-2}}}\\
      &\lesssim\norm{u}_{L^{r,\infty}}^2\norm{\sqrt{\rho}(\theta-\tilde{\theta})}_{L^2}^{1-\frac{3}{r_1}}\norm{\sqrt{\rho}(\theta-\tilde{\theta})}_{L^6}^{\frac{3}{r_1}}
      \norm{\sqrt{\rho}(\theta-\tilde{\theta})}_{L^2}^{1-\frac{3}{r_2}}\norm{\sqrt{\rho}(\theta-\tilde{\theta})}_{L^6}^{\frac{3}{r_2}}\\
      &\lesssim\norm{u}_{L^{r,\infty}}^2\norm{\sqrt{\rho}(\theta-\tilde{\theta})}_{L^2}^{2-\frac{6}{r}}\norm{\sqrt{\rho}(\theta-\tilde{\theta})}_{L^6}^{\frac{6}{r}}\\
      &\leq C_{\epsilon}\norm{u}_{L^{r,\infty}}^{\frac{2}{1-\frac{3}{r}}}\norm{\sqrt{\rho}(\theta-\tilde{\theta})}_{L^2}^2+\epsilon\norm{\theta-\tilde{\theta}}_{L^6}^2\\
      &\leq C_{\epsilon}\left(1+\norm{u}_{L^{r,\infty}}^s\right)\norm{\sqrt{\rho}(\theta-\tilde{\theta})}_{L^2}^2+C\epsilon\norm{\nabla\theta}_{L^2}^2,
    \end{aligned}
    \end{equation}
    where $3<r_1<r<r_2<\infty$ satisfy $\frac{2}{r}=\frac{1}{r_1}+\frac{1}{r_2}$, and we have used Sobolev inequality and \eqref{L2-control-lem} for the bounded domain $\Omega$.
  \end{itemize}
   To bound $\displaystyle\int|u|^4$ in \eqref{boundary-term1}, we follow the steps in estimating $I_1$ in \eqref{theta-L^2-estimate-blowup-u2-1} and \eqref{theta-L^2-estimate-blowup-u2-2} to get
   \begin{itemize}
     \item If $r=\infty$, then it holds that
     \begin{equation}\label{theta-u-L^4-estimate1}
       \begin{aligned}
       \int|u|^4\lesssim\norm{u}_{L^{\infty}}^2\int|u|^2\lesssim\left(1+\norm{u}_{L^{\infty}}^s\right)\left(\int\rho|u|^2+\int|\nabla u|^2\right).
       \end{aligned}
     \end{equation}
     \item If $r<\infty$, similar to \eqref{theta-L^2-estimate-blowup-u2-2}, we have
     \begin{equation}\label{theta-u-L^4-estimate2}
       \begin{aligned}
       \int|u|^4&\lesssim\norm{u}_{L^{r,\infty}}^2\norm{u}_{L^{\frac{2r}{r-2},2}}\lesssim\norm{u}_{L^{r,\infty}}^2\norm{u}_{L^{\frac{2r_1}{r_1-2}}}\norm{u}_{L^{\frac{2r_2}{r_2-2}}}\\
       &\lesssim\norm{u}_{L^{r,\infty}}^2\norm{u}_{L^2}^{2(1-\frac{3}{r})}\norm{u}_{L^6}^{\frac{6}{r}}\\
       &\leq C\left(1+\norm{u}_{L^{r,\infty}}^s\right)\left(\int\rho|u|^2+\norm{\nabla u}_{L^2}^2\right),
       \end{aligned}
     \end{equation}
   \end{itemize}
  where we have used Sobolev inequality and \eqref{L2-control-lem} for the bounded domain $\Omega$.

  The estimate on $I_2:=\displaystyle\int|u|^2|\nabla u|^2$ is much more complicated than that of $\displaystyle\int\rho|u|^2|\theta-\tilde{\theta}|^2$. Here, we will make more delicate estimates by decomposition of velocity $u=h+g$ in $\Omega=\mathbb{R}^3$ or bounded domain $\Omega$ with Dirichlet boundary condition \eqref{Dirichlet-condition}.
Employing similar arguments used in deriving \eqref{theta-L^2-estimate-blowup-u2-1} and \eqref{theta-L^2-estimate-blowup-u2-2}, we obtain
  \begin{itemize}
    \item If $r=\infty$, then
    \begin{equation}\label{theta-L^2-estimate-blowup-u3-1}
      I_2=\int|u|^2|\nabla u|^2\lesssim\left(1+\norm{u}_{L^{\infty}}^s\right)\norm{\nabla u}_{L^2}^2.
    \end{equation}
    \item If $\frac{2}{s}+\frac{3}{r}\leq1$ with $r\in(3,\infty)$, we have
        \begin{equation}\label{theta-L^2-estimate-blowup-u3-2}
          \begin{aligned}
          I_2=\int|u|^2|\nabla u|^2&\lesssim\norm{u}_{L^{r,\infty}}^2\norm{\nabla u}_{L^2}^{2-\frac{6}{r}}\norm{\nabla u}_{L^6}^{\frac{6}{r}}.
          \end{aligned}
        \end{equation}
        Here, we consider three different boundary conditions.
        \begin{enumerate}[(1)]
          \item If $\Omega=\mathbb{R}^3$, then by decomposition $u=g+h$, the elliptic estimates \eqref{h-g-estimate-Whole} and \eqref{assumption2}, we get
          \begin{equation}\label{theta-L^2-estimate-blowup-u3-2-1}
          \begin{aligned}
            \norm{\nabla u}_{L^6}&\lesssim\norm{\nabla h}_{L^6}+\norm{\nabla g}_{L^6}\\
            &\lesssim\norm{\rho\theta-\tilde{\rho}\tilde{\theta}}_{L^6}+\norm{\nabla^2g}_{L^2}\\
            &\lesssim\norm{\rho(\theta-\tilde{\theta})}_{L^6}+\tilde{\theta
            }\norm{\rho-\tilde{\rho}}_{L^6}+\norm{\rho\dot{u}}_{L^2}\\
            &\lesssim\norm{\nabla\theta}_{L^2}+\norm{\rho-\tilde{\rho}}_{L^2}+\norm{\sqrt{\rho}\dot{u}}_{L^2}+1.
          \end{aligned}
          \end{equation}
          Although $\tilde{\rho}=0$ holds in this section, we still can keep $\tilde{\rho}\geq 0$ here and in similar situations below.
          \item If $\Omega$ is bounded with Dirichlet boundary condition \eqref{Dirichlet-condition}, also by decomposition $u=g+h$, the elliptic estimates \eqref{h-g-estimate-Dirichlet}, \eqref{assumption2} and \eqref{L2-control-lem}, we have
          \begin{equation}\label{theta-L^2-estimate-blowup-u3-2-2}
          \begin{aligned}
            \norm{\nabla u}_{L^6}&\lesssim\norm{\nabla h}_{L^6}+\norm{\nabla g}_{L^6}\\
            &\lesssim\norm{\rho\theta-\tilde{\rho}\tilde{\theta}}_{L^6}+\norm{\nabla g}_{H^1}\\
            &\lesssim\norm{\rho(\theta-\tilde{\theta})}_{L^6}+\tilde{\theta
            }\norm{\rho-\tilde{\rho}}_{L^6}+\norm{\rho\dot{u}}_{L^2}\\
            &\lesssim\norm{\sqrt{\rho}(\theta-\tilde{\theta})}_{L^2}+\norm{\nabla\theta}_{L^2}+\norm{\rho-\tilde{\rho}}_{L^2}+\norm{\sqrt{\rho}\dot{u}}_{L^2}+1.
          \end{aligned}
          \end{equation}
          \item If $\Omega$ is bounded with Navier-slip boundary condition \eqref{Navier-slip-condition}, by \eqref{Hodge-decomposition2}, \eqref{GW-estimate2}, \eqref{assumption2} and \eqref{L2-control-lem}, we obtain
          \begin{equation}\label{theta-L^2-estimate-blowup-u3-2-3}
          \begin{aligned}
            \norm{\nabla u}_{L^6}&\lesssim\norm{ G}_{L^6}+\norm{w}_{L^6}+\norm{P-P(\tilde{\rho},\tilde{\theta})}_{L^6}\\
            &\lesssim\norm{\nabla u}_{L^2}+\norm{\nabla G}_{L^2}+\norm{\nabla w}_{L^2}+\norm{P-P(\tilde{\rho},\tilde{\theta})}_{L^2}+\norm{P-P(\tilde{\rho},\tilde{\theta})}_{L^6}\\
            &\lesssim\norm{\nabla u}_{L^2}+\norm{\rho\dot{u}}_{L^2}+\norm{\sqrt{\rho}(\theta-\tilde{\theta})}_{L^2}+\norm{\nabla\theta
            }_{L^2}+\norm{\rho-\tilde{\rho}}_{L^2}+1\\
            &\lesssim\norm{\nabla u}_{L^2}+\norm{\sqrt{\rho}\dot{u}}_{L^2}
            +\norm{\sqrt{\rho}(\theta-\tilde{\theta})}_{L^2}+\norm{\nabla\theta
            }_{L^2}+\norm{\rho-\tilde{\rho}}_{L^2}+1.
          \end{aligned}
          \end{equation}
        \end{enumerate}
        \noindent Thus, by substituting \eqref{theta-L^2-estimate-blowup-u3-2-1}-\eqref{theta-L^2-estimate-blowup-u3-2-3} into \eqref{theta-L^2-estimate-blowup-u3-2}, we get
        \begin{equation}\label{theta-L^2-estimate-blowup-u3-2-F}
          \begin{aligned}
          &\quad I_2=\int|u|^2|\nabla u|^2\\
          &\leq C_{\epsilon_1}\left(1+\norm{u}_{L^{r,\infty}}^s\right)\norm{\nabla u}_{L^2}^2+\epsilon_1\left(\norm{\nabla\theta}_{L^2}^2+\norm{\sqrt{\rho}\dot{u}}_{L^2}^2\right)\\
          &\qquad+C\left(\norm{\sqrt{\rho}(\theta-\tilde{\theta})}_{L^2}^2+\norm{\rho-\tilde{\rho}}_{L^2}^2+1\right).
          \end{aligned}
        \end{equation}
  \end{itemize}
  Combining the above estimates, we deduce \eqref{theta-L^2-estimate-blowup-u}.
  Therefore, the proof of Lemma \ref{lem-theta-L^2-blowup-u} is completed.
\end{proof}
\vspace{-0.2cm}
Next, we derive the estimate on $\displaystyle\int\rho|u|^2+\int|\rho-\tilde{\rho}|^2$.
\begin{lem}\label{lem-u-rho-L^2-estimate-u}
  Under the assumptions of Theorem \ref{thm-blowup-FCNS-velocity} and \eqref{assumption2}, it holds that for some constant $C_1>0$,
  \begin{equation}\label{u-rho-L^2-estimate-u}
    \frac{d}{dt}\int(\rho|u|^2+|\rho-\tilde{\rho}|^2)+C_1\mu\int|\nabla u|^2\leq C(\int\rho|\theta-\tilde{\theta}|^2+\int|\rho-\tilde{\rho}|^2).
  \end{equation}
\end{lem}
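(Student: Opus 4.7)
The plan is to combine the basic energy identity coming from testing $\eqref{FCNS-eq}_2$ against $u$ with the $(\rho-\tilde\rho)^2$ transport identity \eqref{rho-L^2-eq}, so that the pressure term, which is no longer sign-definite when $\tilde\theta>0$, can be absorbed. The structure is essentially the one used in Lemma \ref{lem-u-L^2-estimate}, but now carried out for all three boundary settings simultaneously.

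\textbf{Step 1 (Momentum energy).} Multiply $\eqref{FCNS-eq}_2$ by $u$ and integrate over $\Omega$. Using $\eqref{FCNS-eq}_1$ to rewrite the convective part and integrating by parts, I obtain
\begin{equation*}
\frac{1}{2}\frac{d}{dt}\int\rho|u|^2+\int\bigl(\mu|\nabla u|^2+(\mu+\lambda)|\div u|^2\bigr)=\int R(\rho\theta-\tilde\rho\tilde\theta)\div u+\mathcal{B},
\end{equation*}
where $\mathcal{B}$ collects the possible boundary contributions. For the Cauchy problem and the Dirichlet problem, $\mathcal{B}=0$ by the far-field decay or by $u|_{\pa\Omega}=0$. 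For the Navier-slip case, $\mathcal{B}$ is a curvature boundary integral of exactly the type treated in \eqref{u-1rd-estimate-boundary}; the trace inequality \eqref{trace-inequality} together with \eqref{L2-control-lem} (available on bounded $\Omega$) allows $|\mathcal{B}|$ to be absorbed into $\epsilon\int|\nabla u|^2$ plus $C\int\rho|u|^2+C\int|\rho-\tilde\rho|^2$.

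\textbf{Step 2 (Pressure term).} Split the pressure source as
\begin{equation*}
R(\rho\theta-\tilde\rho\tilde\theta)=R\rho(\theta-\tilde\theta)+R\tilde\theta(\rho-\tilde\rho),
\end{equation*}
and use Cauchy-Schwarz plus Young's inequality together with $\rho\le M^*$ from \eqref{assumption2} to get
\begin{equation*}
\int R(\rho\theta-\tilde\rho\tilde\theta)\div u\le \frac{\mu}{4}\int|\nabla u|^2+C\int\rho|\theta-\tilde\theta|^2+C\int|\rho-\tilde\rho|^2.
\end{equation*}
Combined with Step 1 (and choosing the $\epsilon$ in $\mathcal{B}$ small), this yields
\begin{equation*}
\frac{d}{dt}\int\rho|u|^2+\mu\int|\nabla u|^2\le C\int\rho|\theta-\tilde\theta|^2+C\int|\rho-\tilde\rho|^2.
\end{equation*}

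\textbf{Step 3 (Density $L^2$ estimate).} Starting from \eqref{rho-L^2-eq}, integration over $\Omega$ kills the flux term (by the boundary/far-field conditions), leaving
\begin{equation*}
\frac{d}{dt}\int|\rho-\tilde\rho|^2=-\int(\rho-\tilde\rho)^2\div u-2\tilde\rho\int(\rho-\tilde\rho)\div u.
\end{equation*}
Since $\rho-\tilde\rho$ is uniformly bounded (by $M^*+\tilde\rho$), Young's inequality gives
\begin{equation*}
\frac{d}{dt}\int|\rho-\tilde\rho|^2\le \frac{\mu}{2}\int|\nabla u|^2+C\int|\rho-\tilde\rho|^2.
\end{equation*}

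\textbf{Step 4 (Conclusion).} Adding the estimates from Steps 2 and 3 yields \eqref{u-rho-L^2-estimate-u} with some explicit $C_1>0$ (e.g.\ $C_1=1/2$).

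The only delicate point is the boundary contribution $\mathcal{B}$ in Step 1 under the Navier-slip condition. However, the trick is already fully in the paper, namely the argument leading to \eqref{u-1rd-estimate-boundary}, so it is not a new difficulty here; everything else is a routine energy estimate using $\rho\in L^\infty$.
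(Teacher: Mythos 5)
Your Steps 2--4 are fine and match the paper's computations. The problem is Step 1, specifically the Navier-slip case.

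The paper does not use the $\mu|\nabla u|^2 + (\mu+\lambda)|\div u|^2$ form at all for this lemma. It writes the viscous term as $\mu\curl\curl u - (2\mu+\lambda)\nabla\div u$ and tests against $u$, which yields the coercive quantity $\mu\int|\curl u|^2 + (2\mu+\lambda)\int|\div u|^2$ \emph{with no boundary contribution whatsoever}: the boundary integrals from $\curl\curl u\cdot u$ and $\nabla\div u\cdot u$ vanish identically under $\curl u\times n=0$ and $u\cdot n=0$. The coercivity $\int|\nabla u|^2\lesssim\int|\curl u|^2+\int|\div u|^2$ then follows from \eqref{Hodge-decomposition2}, and that generic constant is exactly why the lemma is stated with an unspecified $C_1>0$. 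Your $\mathcal{B}$ never arises.

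By contrast, your plan of absorbing the Navier-slip boundary term via the trace inequality and \eqref{L2-control-lem} does not go through. The curvature term is $\int_{\partial\Omega}\kappa_\tau|u_\tau|^2\lesssim\norm{u}_{L^2(\partial\Omega)}^2\lesssim\norm{u}_{L^2}\norm{u}_{H^1}\leq\epsilon\norm{\nabla u}_{L^2}^2+C_\epsilon\norm{u}_{L^2}^2$, and at this point one must still control $\norm{u}_{L^2}^2$. The only tool available is \eqref{L2-control-lem}, which gives $\norm{u}_{L^2}^2\leq c\bigl(\norm{\nabla u}_{L^2}^2+\norm{\sqrt{\rho}u}_{L^2}^2\bigr)$ with a constant $c$ that is \emph{not} at your disposal (it depends on $\Omega$, $E_0$, $M_0$). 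The resulting bound is
$\mathcal{B}\lesssim(\epsilon+C_\epsilon c)\norm{\nabla u}_{L^2}^2+C_\epsilon c\norm{\sqrt{\rho}u}_{L^2}^2$,
and the coefficient $C_\epsilon c$ on $\norm{\nabla u}_{L^2}^2$ blows up as $\epsilon\to 0$, so it cannot be absorbed by the viscous term. This is precisely why the argument of \eqref{u-1rd-estimate-boundary} works there but does not transfer here: in that display the boundary integrand carries an extra factor of $G$, so the trace inequality acts on $G$ and on $|u|^2$, and the dangerous piece lands on $\norm{\nabla G}_{L^2}^2\lesssim\norm{\sqrt{\rho}\dot u}_{L^2}^2$ and $\norm{u|\nabla u|}_{L^2}^2$ with small coefficients, not on $\norm{\nabla u}_{L^2}^2$. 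Without $G$ there is no such saving. Also, even granting your claimed bound, you would pick up an extra $C\int\rho|u|^2$ on the right-hand side of \eqref{u-rho-L^2-estimate-u}; that is harmless for Gronwall but does not match the stated inequality. The clean fix is the paper's curl--div route.
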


\begin{proof}
   Multiplying $\eqref{FCNS-eq}_2$ by $u$ and then integrating the resultant equation over $\Omega$, we have from \eqref{assumption2} that
  \begin{equation}\label{u-rho-L^2-estimate-u1}
    \begin{aligned}
    &\quad\frac{d}{dt}\int\rho|u|^2+2\int\mu|\curl u|^2+(2\mu+\lambda)|\div u|^2\\
    &=2\int R(\rho\theta-\tilde{\rho}\tilde{\theta})\div u\\
    &\leq\epsilon\int|\nabla u|^2+C_{\epsilon}\left(\int\rho^2|\theta-\tilde{\theta}|^2+\int|\rho-\tilde{\rho}|^2\right)\\
    &\leq\epsilon\int|\nabla u|^2+C_{\epsilon}\left(\int\rho|\theta-\tilde{\theta}|^2+\int|\rho-\tilde{\rho}|^2\right).
    \end{aligned}
  \end{equation}
   Here, it should be mentioned that for bounded domain $\Omega$ with Navier-slip boundary condition \eqref{Navier-slip-condition}, the original term $\displaystyle\int\mu|\nabla u|^2+(\mu+\lambda)|\div u|^2$ in \eqref{u-rho-L^2-estimate-u1} has been replaced by $\displaystyle\int\mu|\curl u|^2+(2\mu+\lambda)|\div u|^2$, which can also control $\displaystyle\int|\nabla u|^2$ by \eqref{Hodge-decomposition2}. For convenience and consistence, we directly use $\displaystyle\int\mu|\curl u|^2+(2\mu+\lambda)|\div u|^2$ in this lemma.

  Recalling that
  \begin{equation*}
    [(\rho-\tilde{\rho})^2]_t+\div[(\rho-\tilde{\rho})^2u]+(\rho-\tilde{\rho})^2\div u+2\tilde{\rho}(\rho-\tilde{\rho})\div u=0,
  \end{equation*}
  which together with \eqref{assumption2} implies
  \begin{equation}\label{u-rho-L^2-estimate-u2}
    \begin{aligned}
    \frac{d}{dt}\int|\rho-\tilde{\rho}|^2&\leq\epsilon\norm{\nabla u}_{L^2}^2+C_{\epsilon}\left(\norm{\rho-\tilde{\rho}}_{L^4}^4+\norm{\rho-\tilde{\rho}}_{L^2}^2\right)\\
    &\leq\epsilon\norm{\nabla u}_{L^2}^2+C_{\epsilon}\norm{\rho-\tilde{\rho}}_{L^2}^2.
    \end{aligned}
  \end{equation}
  Then, combining \eqref{u-rho-L^2-estimate-u1} and \eqref{u-rho-L^2-estimate-u2} leads to \eqref{u-rho-L^2-estimate-u}, and thus completes the proof of Lemma \ref{lem-u-rho-L^2-estimate-u}.
\end{proof}
Next, we dedcuce the estimate on $\norm{\nabla u}_{L^{\infty}(0,T;L^2)}$.
\begin{lem}\label{lem-u-1rd-estimate-u}
  Under the assumptions of Theorem \ref{thm-blowup-FCNS-velocity} and \eqref{assumption2}, it holds that
  \begin{itemize}
    \item If $\Omega=\mathbb{R}^3$, then
    \begin{equation}\label{u-1rd-estimate-u-Cauchy}
      \begin{aligned}
      &\quad\frac{d}{dt}\int[\mu|\nabla u|^2+(\mu+\lambda)|\div u|^2-2R(\rho\theta-\tilde{\rho}\tilde{\theta})\div u+\frac{1}{2\mu+\lambda}|R(\rho\theta-\tilde{\rho}\tilde{\theta})|^2]+\int\rho|\dot{u}|^2\\
      &\leq C\left(1+\norm{u}_{L^{r,\infty}}^s\right)\left(\norm{\nabla u}_{L^2}^2+\norm{\sqrt{\rho}u}_{L^2}^2+\norm{\sqrt{\rho}(\theta-\tilde{\theta})}_{L^2}^2+\norm{\rho-\tilde{\rho}}_{L^2}^2+1\right)
      +C\norm{\nabla\theta}_{L^2}^2.
      \end{aligned}
    \end{equation}
    \item If $\Omega$ is bounded with Dirichlet boundary condition \eqref{Dirichlet-condition}, then
    \begin{equation}\label{u-1rd-estimate-u-Dirichlet}
      \begin{aligned}
      &\quad\frac{d}{dt}\int[\mu|(\nabla u,\nabla h)|^2+(\mu+\lambda)|(\div u,\div h)|^2-2R(\rho\theta-\tilde{\rho}\tilde{\theta})\div u+\frac{|R(\rho\theta-\tilde{\rho}\tilde{\theta})|^2}{2\mu+\lambda}]
      +\int\rho|\dot{u}|^2\\
      &\leq C\left(1+\norm{u}_{L^{r,\infty}}^s\right)\left(\norm{\nabla u}_{L^2}^2+\norm{\sqrt{\rho}u}_{L^2}^2+\norm{\sqrt{\rho}(\theta-\tilde{\theta})}_{L^2}^2+\norm{\rho-\tilde{\rho}}_{L^2}^2+1\right)
      +C\norm{\nabla\theta}_{L^2}^2.
      \end{aligned}
    \end{equation}
    \item If $\Omega$ is bounded with Navier-slip boundary condition \eqref{Navier-slip-condition}, then
    \begin{equation}\label{u-1rd-estimate-u-Navier}
      \begin{aligned}
      &\quad\frac{d}{dt}\int[\mu|\curl u|^2+(2\mu+\lambda)|\div u|^2-2R(\rho\theta-\tilde{\rho}\tilde{\theta})\div u+\frac{1}{2\mu+\lambda}|R(\rho\theta-\tilde{\rho}\tilde{\theta})|^2]+\int\rho|\dot{u}|^2\\
      &\leq C\left(1+\norm{u}_{L^{r,\infty}}^s\right)\left(\norm{\nabla u}_{L^2}^2+\norm{\sqrt{\rho}u}_{L^2}^2+\norm{\sqrt{\rho}(\theta-\tilde{\theta})}_{L^2}^2+\norm{\rho-\tilde{\rho}}_{L^2}^2+1\right)
      +C\norm{\nabla\theta}_{L^2}^2.
      \end{aligned}
    \end{equation}
  \end{itemize}
\end{lem}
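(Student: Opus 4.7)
The plan is to prove all three estimates in parallel by adapting Lemmas \ref{lem-u-1rd-Dirichlet}, \ref{lem-u-1rd-Cauchy} and \ref{lem-u-1rd-Navier} to the present hypothesis, where the weak Serrin norm is imposed on $u$ rather than on $\theta-\tilde\theta$. The starting point is to multiply the momentum equation in \eqref{FCNS-eq} by $u_t$ and integrate over $\Omega$; integration by parts yields $\tfrac{1}{2}\tfrac{d}{dt}\int[\mu|\nabla u|^2+(\mu+\lambda)|\div u|^2]+\int\rho|u_t|^2 = -\int\nabla P\cdot u_t-\int\rho(u\cdot\nabla u)\cdot u_t$, with $\mu|\curl u|^2+(2\mu+\lambda)|\div u|^2$ replacing the first form in the Navier-slip case thanks to $\curl u\times n=0$. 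The convective term on the right is split by Cauchy--Schwarz, producing $\frac14\int\rho|u_t|^2$ (absorbed) and $C\int|u|^2|\nabla u|^2$, which is handled by \eqref{theta-L^2-estimate-blowup-u3-2-F} from Lemma \ref{lem-theta-L^2-blowup-u}. The conversion between $\int\rho|u_t|^2$ and the stated $\int\rho|\dot u|^2$ uses $|\dot u|^2\le 2|u_t|^2+2|u\cdot\nabla u|^2$ together with $\int\rho|u\cdot\nabla u|^2\le\|\rho\|_{L^\infty}\int|u|^2|\nabla u|^2$, harmless by the same estimate.

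The core is the pressure term, integrated by parts to $\tfrac{d}{dt}\int R(\rho\theta-\tilde\rho\tilde\theta)\div u-\int P_t\,\div u$, where $P_t$ is expressed through the temperature equation exactly as in \eqref{P-t-eq}. In the Cauchy and Navier-slip cases, I complete the square via the effective viscous flux $G$, writing $-\int P_t\,\div u=-\tfrac{1}{2\mu+\lambda}\int P_t\,G-\tfrac{1}{2(2\mu+\lambda)}\tfrac{d}{dt}\int|R(\rho\theta-\tilde\rho\tilde\theta)|^2$; the second piece furnishes the $|R(\rho\theta-\tilde\rho\tilde\theta)|^2/(2\mu+\lambda)$ contribution in the conclusion. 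In the Dirichlet case, I use the decomposition $u=h+g$ from \eqref{h-pressure-eq}--\eqref{g-material-eq}, converting $-\int P_t\,\div h$ into the time derivative of $\tfrac{1}{2}\int[\mu|\nabla h|^2+(\mu+\lambda)|\div h|^2]$ and leaving only $-\int P_t\,\div g$. After substituting $P_t$, the residual error terms all have the shape $\int\rho\theta\,u\cdot\nabla G$, $\int\rho\dot u\cdot u\,G$, $\int u\,|\nabla u|\,|\nabla G|$, and $\int\nabla\theta\cdot\nabla G$ (with $\nabla G$ replaced by $\nabla\div g$ in the Dirichlet case); each is treated by Hölder's inequality in Lorentz spaces, placing $u$ in $L^{r,\infty}$ and using $\|\nabla G\|_{L^2}\lesssim\|\sqrt\rho\dot u\|_{L^2}$ from \eqref{GW-estimate1}--\eqref{GW-estimate2} (respectively $\|\nabla^2 g\|_{L^2}\lesssim\|\sqrt\rho\dot u\|_{L^2}$ from \eqref{h-g-estimate-Dirichlet}), followed by interpolation and Sobolev embedding into $L^6$ combined with \eqref{L2-control-lem}, exactly mirroring the treatment of $I_1$ and $I_2$ in \eqref{theta-L^2-estimate-blowup-u2-2} and \eqref{theta-L^2-estimate-blowup-u3-2-F}.

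For the Navier-slip case, the extra boundary term $-\int_{\partial\Omega}2\mu\mathcal{D}(u)n\cdot u\,G$ is rewritten using $(\mathcal{D}(u)n)_\tau=-\kappa_\tau u_\tau$ and controlled by the trace inequality \eqref{trace-inequality} combined with \eqref{GW-estimate2}, precisely as in \eqref{u-1rd-estimate-boundary}; the resulting $\|u\|_{L^4}^4$ factor is absorbed via \eqref{theta-u-L^4-estimate1}--\eqref{theta-u-L^4-estimate2}. The main obstacle throughout is bookkeeping: every invocation of \eqref{theta-L^2-estimate-blowup-u3-2-F}, every Lorentz-space interpolation involving $\nabla G$, and the boundary estimate each produce an $\epsilon\|\sqrt\rho\dot u\|_{L^2}^2$ contribution, so the coefficient in front of $\int\rho|\dot u|^2$ on the right must be tracked carefully and kept strictly below the one on the left. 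Choosing $\epsilon$ small at the end and collecting the contributions yields the three displayed inequalities.
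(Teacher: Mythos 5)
Your proposal is essentially the paper's argument: multiply the momentum equation by $u_t$ (the paper writes the identity directly with $\int\rho|\dot u|^2$, which amounts to testing with $\dot u$; your route through $\int\rho|u_t|^2$ plus the conversion $\int\rho|u_t|^2\ge\tfrac12\int\rho|\dot u|^2-\int\rho|u\cdot\nabla u|^2$ reaches the same place), complete the square with $G$ in the Cauchy and Navier-slip cases, use the $u=h+g$ split and \eqref{h-g-estimate-Dirichlet} in the Dirichlet case, express $P_t$ via \eqref{P-t-eq}, absorb $\|\nabla G\|_{L^2}\lesssim\|\sqrt\rho\dot u\|_{L^2}$, reduce the remaining terms to $\int\rho|\theta-\tilde\theta|^2|u|^2$ and $\int|u|^2|\nabla u|^2$ and control them with the Lorentz-space estimates already established in Lemma \ref{lem-theta-L^2-blowup-u}, and in the Navier-slip case treat the boundary integral exactly as in \eqref{boundary-term2} via \eqref{trace-inequality} and the $\|u\|_{L^4}^4$ bound. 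You also correctly flag the accounting of the $\epsilon\|\sqrt\rho\dot u\|_{L^2}^2$ contributions. This is the paper's proof modulo cosmetic rearrangement.
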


\begin{proof}
  Multiplying $\eqref{FCNS-eq}_2$ by $u_t$, and then integrating the resultant equation over $\Omega$, we have from integrating by parts that
  \begin{equation}\label{u-1rd-estimate-u1}
    \begin{aligned}
    &\quad\frac{1}{2}\frac{d}{dt}\int(\mu|\curl u|^2+(2\mu+\lambda)|\div u|^2)+\int\rho|\dot{u}|^2\\
    &=-\int\nabla P\cdot u_t-\int\rho u\cdot\nabla u\cdot\dot{u}\\
    &\leq\frac{d}{dt}\int R(\rho\theta-\tilde{\rho}\tilde{\theta})\div u-\int P_t\div u+\frac{1}{4}\int\rho|\dot{u}|^2+C\int\rho|u|^2|\nabla u|^2\\
    &\leq\frac{d}{dt}\int R(\rho\theta-\tilde{\rho}\tilde{\theta})\div u-\frac{1}{2(2\mu+\lambda)}\frac{d}{dt}\int|R(\rho\theta-\tilde{\rho}\tilde{\theta})|^2-\frac{1}{2\mu+\lambda}\int P_tG\\
    &\qquad+\frac{1}{4}\int\rho|\dot{u}|^2+C\int|u|^2|\nabla u|^2.
    \end{aligned}
  \end{equation}
   It should be mentioned that for $\Omega=\mathbb{R}^3$ or bounded domain $\Omega$ with Dirichlet boundary condition \eqref{Dirichlet-condition}, $\displaystyle\int[\mu|\curl u|^2+(2\mu+\lambda)|\div u|^2]$\footnote{It can provide the energy $\displaystyle\int|\nabla u|^2$ by \eqref{Hodge-decomposition1} and \eqref{Hodge-decomposition2}.} can be replaced by $\displaystyle\int[\mu|\nabla u|^2+(\mu+\lambda)|\div u|^2]$. We take the form in \eqref{u-1rd-estimate-u1} for consistence. It is easy to check that the energy in \eqref{u-1rd-estimate-u1}
  \begin{equation*}
  \int[\mu|\curl u|^2+(2\mu+\lambda)|\div u|^2-2R(\rho\theta-\tilde{\rho}\tilde{\theta})\div u+\frac{1}{2\mu+\lambda}|R(\rho\theta-\tilde{\rho}\tilde{\theta})|^2]\geq\int\mu|\curl u|^2,
  \end{equation*}
  which however is not enough to close the 1-order energy estimate since there is no control on the energy $\displaystyle\int|\div u|^2$. On the other hand, noticing that  $\rho$ is bounded, we have
  \begin{equation*}
    \int-2R(\rho\theta-\tilde{\rho}\tilde{\theta})\div u\leq\frac{\mu}{2}\int|\div u|^2+C\int\rho|\theta-\tilde{\theta}|^2+C\int|\rho-\tilde{\rho}|^2.
  \end{equation*}
  Then, we can combine Lemma \ref{lem-theta-L^2-blowup-u} and Lemma \ref{lem-u-rho-L^2-estimate-u} to close the energy estimate.

   For $\Omega=\mathbb{R}^3$, due to \eqref{u-1rd-estimate-Cauchy3} and \eqref{GW-estimate1}, we have
  \begin{equation}\label{u-1rd-estimate-u2-1}
    \begin{aligned}
    &\quad-\frac{1}{2\mu+\lambda}\int P_tG\\
    &\leq\epsilon\norm{\sqrt{\rho}\dot{u}}_{L^2}^2+\epsilon\norm{\nabla G}_{L^2}^2+C_{\epsilon}\left(\int\rho^2|\theta-\tilde{\theta}|^2|u|^2+\int\rho^2|u|^2+\norm{\nabla\theta}_{L^2}^2+\int\rho|u|^2|G|^2\right)\\
    &\leq C_{\epsilon}\left(\int(\rho^2+\rho^3)|\theta-\tilde{\theta}|^2|u|^2+\int\rho^2|u|^2+\norm{\nabla\theta}_{L^2}^2+\int(\rho+\rho^3)|u|^2+\int\rho|u|^2|\nabla u|^2\right)\\
    &\qquad+C\epsilon\norm{\sqrt{\rho}\dot{u}}_{L^2}^2\\
    &\leq C_{\epsilon}\left(\int\rho|\theta-\tilde{\theta}|^2|u|^2+\int\rho|u|^2+\norm{\nabla\theta}_{L^2}^2+\int|u|^2|\nabla u|^2\right)+C\epsilon\norm{\sqrt{\rho}\dot{u}}_{L^2}^2\\
    &\leq C_{\epsilon}\left(1+\norm{u}_{L^{r,\infty}}^s\right)\left(\norm{\nabla u}_{L^2}^2+\norm{\sqrt{\rho}u}_{L^2}^2+\norm{\sqrt{\rho}(\theta-\tilde{\theta})}_{L^2}^2+\norm{\rho-\tilde{\rho}}_{L^2}^2+1\right)\\
    &\qquad+C\epsilon\norm{\sqrt{\rho}\dot{u}}_{L^2}^2+C_{\epsilon}\norm{\nabla\theta}_{L^2}^2,
    \end{aligned}
  \end{equation}
  where we have used the estimates on $\displaystyle\int\rho|\theta-\tilde{\theta}|^2|u|^2$ and $\displaystyle\int|u|^2|\nabla u|^2$ in Lemma \ref{lem-theta-L^2-blowup-u}.

  For the bounded domain $\Omega$ with Navier-slip boundary condition \eqref{Navier-slip-condition}, there exist additional boundary terms as \eqref{u-1rd-boundary} and \eqref{u-1rd-estimate-boundary}:
  \begin{equation}\label{boundary-term2}
    \begin{aligned}
    &\quad-\int_{\pa\Omega}2\mu\mathcal{D}(u)n\cdot uG\\
    &=-\int_{\pa\Omega}2\mu(\mathcal{D}(u)n)_{\tau}\cdot u_{\tau}G=\int_{\pa\Omega}2\mu\kappa_{\tau}|u_{\tau}|^2G\\
    &\leq C\norm{G}_{L^2(\pa\Omega)}\norm{|u|^2}_{L^2(\pa\Omega)}
    \lesssim\norm{G}_{L^2}^{\frac{1}{2}}\norm{G}_{H^1}^{\frac{1}{2}}\norm{|u|^2}_{L^2}^{\frac{1}{2}}\norm{|u|^2}_{H^1}^{\frac{1}{2}}\\
    &\leq\epsilon\norm{\nabla G}_{L^2}^2+\epsilon\norm{u|\nabla u|}_{L^2}^2+C_{\epsilon}(\norm{G}_{L^2}^2+\norm{|u|^2}_{L^2}^2)\\
    &\leq C_{\epsilon}\left(1+\norm{u}_{L^{r,\infty}}^s\right)\left(\norm{\nabla u}_{L^2}^2+\norm{\sqrt{\rho}u}_{L^2}^2+\norm{\sqrt{\rho}(\theta-\tilde{\theta})}_{L^2}^2+\norm{\rho-\tilde{\rho}}_{L^2}^2+1\right)\\
    &\qquad+C\epsilon\norm{\sqrt{\rho}\dot{u}}_{L^2}^2+\epsilon\norm{\nabla\theta}_{L^2}^2,
    \end{aligned}
  \end{equation}
  where we have used the estimates on $\displaystyle\int|u|^4$ and $\displaystyle\int|u|^2|\nabla u|^2$ in Lemma \ref{lem-theta-L^2-blowup-u}. Thus for the bounded domain $\Omega$ with Navier-slip boundary condition \eqref{Navier-slip-condition}, we can get the same estimate as \eqref{u-1rd-estimate-u2-1}.

  For the bounded domain $\Omega$ with Dirichlet boundary condition \eqref{Dirichlet-condition}, due to the absence of estimate on $\norm{\nabla G}_{L^2}$, we deal with the term $\displaystyle-\int P_t\div u$ in a different way. To see this, by decomposition $u=h+g$, we have from \eqref{u-1rd-estimate2} and \eqref{u-1rd-estimate4} that
  \begin{equation}\label{u-1rd-estimate-u2-2}
    \begin{aligned}
    -\int P_t\div u&=-\frac{1}{2}\frac{d}{dt}\int(\mu|\nabla h|^2+(\mu+\lambda)|\div h|^2)-\int P_t\div g\\
    &\leq-\frac{1}{2}\frac{d}{dt}\int(\mu|\nabla h|^2+(\mu+\lambda)|\div h|^2)+C\epsilon\norm{\sqrt{\rho}\dot{u}}_{L^2}^2\\
    &\quad+C_{\epsilon}\left(\norm{u|\nabla u|}_{L^2}^2+\int\rho|\theta-\tilde{\theta}|^2|u|^2+\norm{\nabla u}_{L^2}^2+\int|u|^2|\div h|^2+\norm{\nabla\theta}_{L^2}^2\right)\\
    &\leq-\frac{1}{2}\frac{d}{dt}\int(\mu|\nabla h|^2+(\mu+\lambda)|\div h|^2)+C\epsilon\norm{\sqrt{\rho}\dot{u}}_{L^2}^2+C_{\epsilon}\norm{\nabla\theta}_{L^2}^2\\
    &\quad+C_{\epsilon}(1+\norm{u}_{L^{r,\infty}}^s)\left(\norm{\nabla u}_{L^2}^2+\norm{\sqrt{\rho}u}_{L^2}^2+\norm{\sqrt{\rho}(\theta-\tilde{\theta})}_{L^2}^2+\norm{\rho-\tilde{\rho}}_{L^2}^2+1\right),
    \end{aligned}
  \end{equation}
  where we have used the estimates on $\displaystyle\int\rho|u|^2|\theta-\tilde{\theta}|^2$ and $\displaystyle\int|u|^2|\nabla u|^2$ in Lemma \ref{lem-theta-L^2-blowup-u}.\\
  Combining the above relations, we conclude that
  \begin{itemize}
    \item If $\Omega=\mathbb{R}^3$, then
    \begin{equation}\label{u-1rd-estimate-u3-1}
      \begin{aligned}
      &\quad\frac{d}{dt}\int[\mu|\nabla u|^2+(\mu+\lambda)|\div u|^2-2R(\rho\theta-\tilde{\rho}\tilde{\theta})\div u+\frac{1}{2\mu+\lambda}|R(\rho\theta-\tilde{\rho}\tilde{\theta})|^2]+\int\rho|\dot{u}|^2\\
      &\leq C(1+\norm{u}_{L^{r,\infty}}^s)\left(\norm{\nabla u}_{L^2}^2+\norm{\sqrt{\rho}u}_{L^2}^2+\norm{\sqrt{\rho}(\theta-\tilde{\theta})}_{L^2}^2+\norm{\rho-\tilde{\rho}}_{L^2}^2+1\right)
      +C\norm{\nabla\theta}_{L^2}^2.
      \end{aligned}
    \end{equation}
    \item If $\Omega$ is bounded with Dirichlet boundary condition \eqref{Dirichlet-condition}, then
    \begin{equation}\label{u-1rd-estimate-u3-2}
      \begin{aligned}
      &\quad\frac{d}{dt}\int[\mu|(\nabla u,\nabla h)|^2+(\mu+\lambda)|(\div u,\div h)|^2-2R(\rho\theta-\tilde{\rho}\tilde{\theta})\div u+\frac{|R(\rho\theta-\tilde{\rho}\tilde{\theta})|^2}{2\mu+\lambda}]
      +\int\rho|\dot{u}|^2\\
      &\leq C(1+\norm{u}_{L^{r,\infty}}^s)\left(\norm{\nabla u}_{L^2}^2+\norm{\sqrt{\rho}u}_{L^2}^2+\norm{\sqrt{\rho}(\theta-\tilde{\theta})}_{L^2}^2+\norm{\rho-\tilde{\rho}}_{L^2}^2+1\right)
      +C\norm{\nabla\theta}_{L^2}^2.
      \end{aligned}
    \end{equation}
    \item If $\Omega$ is bounded with Navier-slip boundary condition \eqref{Navier-slip-condition}, then
    \begin{equation}\label{u-1rd-estimate-u3-3}
      \begin{aligned}
      &\quad\frac{d}{dt}\int[\mu|\curl u|^2+(2\mu+\lambda)|\div u|^2-2R(\rho\theta-\tilde{\rho}\tilde{\theta})\div u+\frac{1}{2\mu+\lambda}|R(\rho\theta-\tilde{\rho}\tilde{\theta})|^2]+\int\rho|\dot{u}|^2\\
      &\leq C(1+\norm{u}_{L^{r,\infty}}^s)\left(\norm{\nabla u}_{L^2}^2+\norm{\sqrt{\rho}u}_{L^2}^2+\norm{\sqrt{\rho}(\theta-\tilde{\theta})}_{L^2}^2+\norm{\rho-\tilde{\rho}}_{L^2}^2+1\right)
      +C\norm{\nabla\theta}_{L^2}^2.
      \end{aligned}
    \end{equation}
  \end{itemize}
  Therefore, we have completed the proof of Lemma \ref{lem-u-1rd-estimate-u}.

\end{proof}

Combining Lemma \ref{lem-theta-L^2-blowup-u}, Lemma \ref{lem-u-rho-L^2-estimate-u} with Lemma \ref{lem-u-1rd-estimate-u}, it is easy to get the following lemma.

\begin{lem}\label{lem-u-1rd-estimate-u-close}
  Under the assumptions of Theorem \ref{thm-blowup-FCNS-velocity} and \eqref{assumption2}, it holds that
  \begin{equation}\label{u-1rd-estimate-u-close}
    \sup_{t\in[0,T]}\int(|\nabla u|^2+\rho|u|^2+|\rho-\tilde{\rho}|^2+\rho|\theta-\tilde{\theta}|^2)+\int_0^T\int(|\nabla u|^2+\rho|\dot{u}|^2)\leq C,
  \end{equation}
  for any $T\in(0,T^*)$.
\end{lem}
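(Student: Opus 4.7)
\medskip
\noindent\textbf{Proof proposal.} The plan is to close the energy via a suitable linear combination
\[
\mathcal{E}(t):=\mathcal{E}_{\mathrm{vel}}(t)+M_1\int c_v\rho|\theta-\tilde\theta|^2+M_2\int(\rho|u|^2+|\rho-\tilde\rho|^2),
\]
where $\mathcal{E}_{\mathrm{vel}}(t)$ is whichever of the three LHS-energies in \eqref{u-1rd-estimate-u-Cauchy}, \eqref{u-1rd-estimate-u-Dirichlet}, \eqref{u-1rd-estimate-u-Navier} matches the boundary condition under consideration, and $M_1,M_2\gg 1$ are constants to be chosen. First I would add $M_1\times$\eqref{theta-L^2-estimate-blowup-u}, $M_2\times$\eqref{u-rho-L^2-estimate-u}, and the appropriate version of Lemma \ref{lem-u-1rd-estimate-u}. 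The parameter $\epsilon_1>0$ in Lemma \ref{lem-theta-L^2-blowup-u} is taken so small that $M_1\epsilon_1\|\sqrt\rho\dot u\|_{L^2}^2$ is absorbed by the $\int\rho|\dot u|^2$ coming from Lemma \ref{lem-u-1rd-estimate-u}; then $M_1$ is fixed large enough that $\kappa M_1\|\nabla\theta\|_{L^2}^2$ dominates the $C\|\nabla\theta\|_{L^2}^2$ term on the right of Lemma \ref{lem-u-1rd-estimate-u}; finally $M_2$ is fixed large enough that $C_1\mu M_2\|\nabla u\|_{L^2}^2$ absorbs the $\|\nabla u\|_{L^2}^2$ and related contributions appearing with the factor $(1+\|u\|_{L^{r,\infty}}^s)$ after the choice of $M_1$, using that $\rho\leq M^*$ makes $M_2\int|\rho-\tilde\rho|^2$ dominate the corresponding right-hand term.

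The main technical point is to verify that $\mathcal{E}(t)$ is coercive in the desired quantities. The only sign-indefinite piece of $\mathcal{E}_{\mathrm{vel}}$ is the cross term $-2R(\rho\theta-\tilde\rho\tilde\theta)\div u$. Using Young's inequality together with $\rho\leq M^*$, one has
\[
\bigl|2R(\rho\theta-\tilde\rho\tilde\theta)\div u\bigr|\leq \tfrac{\mu}{2}|\div u|^2+C\rho|\theta-\tilde\theta|^2+C|\rho-\tilde\rho|^2,
\]
since $|\rho\theta-\tilde\rho\tilde\theta|^2\lesssim\rho|\theta-\tilde\theta|^2+|\rho-\tilde\rho|^2$. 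Thus, once $M_1\geq C/c_v$ and $M_2\geq C$, the combination yields
\[
\mathcal{E}(t)\geq c\int\bigl(|\nabla u|^2+\rho|u|^2+|\rho-\tilde\rho|^2+\rho|\theta-\tilde\theta|^2\bigr),
\]
where in the Dirichlet case the extra nonnegative term $\mu|\nabla h|^2+(\mu+\lambda)|\div h|^2$ is harmless, and in the Navier-slip case one invokes \eqref{Hodge-decomposition2} to pass from $\int(\mu|\curl u|^2+(2\mu+\lambda)|\div u|^2)$ to $\int|\nabla u|^2$. The square $\frac{1}{2\mu+\lambda}|R(\rho\theta-\tilde\rho\tilde\theta)|^2$ is nonnegative and contributes no obstruction.

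After these choices, the combined differential inequality takes the form
\[
\frac{d}{dt}\mathcal{E}(t)+\int\rho|\dot u|^2+c\int|\nabla\theta|^2+c\int|\nabla u|^2
\leq C\bigl(1+\|u\|_{L^{r,\infty}}^s\bigr)\bigl(\mathcal{E}(t)+1\bigr).
\]
The assumption \eqref{assumption2} gives $\|u\|_{L^s(0,T;L^{r,\infty})}^s\leq (M^*)^s$ uniformly in $T<T^*$, so that the Gronwall factor $\int_0^T(1+\|u\|_{L^{r,\infty}}^s)\,dt\leq T^*+(M^*)^s$ is a finite constant depending only on $M^*$ and $T^*$. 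Gronwall's lemma then yields the desired bound \eqref{u-1rd-estimate-u-close}; integrating the time-derivative inequality simultaneously produces the $\int_0^T\int(|\nabla u|^2+\rho|\dot u|^2)$ bound. The only step I expect to require extra care is the initialisation $\mathcal{E}(0)\leq C$, which however follows directly from the assumed regularity and compatibility conditions on $(\rho_0,u_0,\theta_0)$ listed before Definition \ref{defi-strong-solution}.
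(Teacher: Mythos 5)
Your plan is exactly what the paper does (the paper's own ``proof'' is just the one-line remark ``Combining Lemma \ref{lem-theta-L^2-blowup-u}, Lemma \ref{lem-u-rho-L^2-estimate-u} with Lemma \ref{lem-u-1rd-estimate-u}, it is easy to get the following lemma''), and the key coercivity observation — controlling $-2R(\rho\theta-\tilde\rho\tilde\theta)\div u$ by $\frac{\mu}{2}|\div u|^2 + C\rho|\theta-\tilde\theta|^2 + C|\rho-\tilde\rho|^2$ and then borrowing $\int\rho|\theta-\tilde\theta|^2$ and $\int|\rho-\tilde\rho|^2$ from the $M_1,M_2$ blocks — is precisely the one the authors flag at the end of the proof of Lemma \ref{lem-u-1rd-estimate-u}. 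Two small points of wording that do not affect the conclusion: (i) the parameter dependence is backwards as written — you must first fix $M_1$ so that $M_1\kappa$ beats the constant $C$ multiplying $\norm{\nabla\theta}_{L^2}^2$ in Lemma \ref{lem-u-1rd-estimate-u}, and only then take $\epsilon_1\le 1/(2M_1)$ in Lemma \ref{lem-theta-L^2-blowup-u} so that $M_1\epsilon_1\norm{\sqrt\rho\dot u}_{L^2}^2$ is absorbed by $\int\rho|\dot u|^2$; (ii) the remark that ``$M_2$ is fixed large enough that $C_1\mu M_2\norm{\nabla u}_{L^2}^2$ absorbs the $\norm{\nabla u}_{L^2}^2$ contributions appearing with the factor $(1+\norm{u}_{L^{r,\infty}}^s)$'' cannot be meant literally, since a fixed constant cannot dominate a time-dependent prefactor; those terms simply remain on the right-hand side, bounded by $C(1+\norm{u}_{L^{r,\infty}}^s)\mathcal{E}(t)$ as your final displayed differential inequality correctly shows, and Gronwall together with $\int_0^{T}\norm{u}_{L^{r,\infty}}^s\,dt\le (M^*)^s$ closes the estimate. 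With those wording corrections the argument is complete and matches the intended proof.
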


{\bf{Proof of Theorem Theorem \ref{thm-blowup-FCNS-velocity}.}}
Combining Lemma \ref{lem-theta-L^2-blowup-u}-Lemma \ref{lem-u-1rd-estimate-u-close} and modifying the methods of \cite{Huang-Li-Wang2013} and \cite{Feireisl2024}, we conclude that
\begin{equation}\label{u-2rd-estimate-u}
  \sup_{t\in[0,T^*)}(\norm{\rho-\tilde{\rho}}_{H^1\cap W^{1,q}}+\norm{\nabla u}_{H^1}+\norm{\nabla\theta}_{H^1})\leq C,
\end{equation}
 for three different cases. It is worth mentioning that for bounded domain $\Omega$ with Navier-slip boundary condition \eqref{Navier-slip-condition}, we need to
  tackle with additional boundary terms similar as \eqref{boundary-term1} and \eqref{boundary-term2} carefully.
  For the sake of simplicity, we omit the details here.
  Combining the available local existence results stated in Remark \ref{rem-local-existence} with \eqref{u-2rd-estimate-u}, it is clear that  the strong solution can be extended beyond $T^*$ (see \cite{Huang-Li-Wang2013} for instance), which means $T^*$ is not the maximal existence time, and thus leads to a contradiction.
Therefore, we complete the proof of Theorem \ref{thm-blowup-FCNS-velocity}.

\section{Proof of Theorem \ref{thm-blowup-CNS}}
Assume that $T^*<\infty$ and there exist constants $r\in(3,\infty]$, $s\in[2,\infty]$ and sufficiently large $\alpha>0$ satisfying \eqref{index-blowup3} such that
\begin{equation}\label{assumption3}
  \norm{\rho}_{L^{\infty}(0,T;L^{\alpha})}+\norm{u}_{L^s(0,T;L^r_w)}\leq M^*<\infty,
\end{equation}
for any $T\in(0,T^*)$. Our aim is to show that
\begin{equation}\label{goal-inequality2}
  \sup_{t\in[0,T]}(\norm{\rho-\tilde{\rho}}_{H^1\cap W^{1,q}}+\norm{\nabla u}_{H^1})\leq C.
\end{equation}
Then a contradiction argument as in Section \ref{sect-proof-theorem-FCNS-theta} will prove Theorem \ref{thm-blowup-CNS}. It is worth mentioning that due to the absence of the technical condition $\rho_0\in L^1$, the estimate in \cite{Wang2020} is not applicable in our case and thus some new strategies should be introduced to overcome this difficulty. Moreover, if $\alpha<\infty$, $\tilde{\rho}=0$ should hold. For convenience, we take $\tilde{\rho}=0$ in this whole section.

Next, we first give the key estimate on $\norm{\nabla u}_{L^{\infty}(0,T;L^2)}$.

\begin{lem}\label{lem-u-1rd-CNS-u}
   Under the assumptions of Theorem \ref{thm-blowup-CNS} and \eqref{assumption3}, it holds that
   \begin{equation}\label{u-1rd-CNS-u}
     \sup_{t\in[0,T]}\int(|\nabla u|^2+|\rho|^2)+\int_0^T\norm{\sqrt{\rho}\dot{u}}_{L^2}^2\leq C,
   \end{equation}
   for any $T\in(0,T^*)$.
\end{lem}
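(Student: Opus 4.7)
My plan is to prove \eqref{u-1rd-CNS-u} by deriving two coupled differential inequalities---one for the viscous energy $\int|\nabla u|^2$, one for $\int\rho^2$---and closing them via Gronwall's inequality. The obstruction singled out in the introduction is that Wang's argument in \cite{Wang2020} uses $\rho_0\in L^1$ to control the pressure coupling $\int P$; the replacement here is that the $L^2$-bound on $\rho$ itself will absorb the pressure-type coupling previously requiring $\rho\in L^1$.

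First I would multiply $\eqref{CNS-eq}_2$ by $u_t$ and integrate, following the scheme of Lemma \ref{lem-u-1rd-Cauchy} specialized to $\theta=\tilde{\theta}=0$. Introducing the coupling quantities $-2\int P\div u$ and $(2\mu+\lambda)^{-1}\int P^2$ converts the pressure work into a clean time derivative plus a remainder $-(2\mu+\lambda)^{-1}\int P_t G$, where $G=(2\mu+\lambda)\div u-P$. Using $P_t=-\gamma P\div u-u\cdot\nabla P$ from the continuity equation, integration by parts, and the elliptic estimate $\norm{\nabla G}_{L^2}\lesssim\norm{\rho\dot{u}}_{L^2}$ from \eqref{GW-estimate1}, I can absorb $\epsilon\norm{\sqrt{\rho}\dot{u}}_{L^2}^2$ at the price of the term $\int\rho|u|^2|\nabla u|^2$. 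This term I would bound by H\"older (using $\rho\in L^\infty_t L^\alpha_x$), followed by the Lorentz--H\"older inequality \eqref{Lorentz-Holder}, the interpolation \eqref{Lorentz-interpolation}, Sobolev embedding, and a Lam\'e-type bound for $\norm{\nabla u}_{L^6}$ obtained from the decomposition $u=g+h$ of \eqref{h-pressure-eq}--\eqref{g-material-eq}. The strict inequality $\tfrac{2}{s}+\tfrac{3}{r}<1$ in \eqref{index-blowup3} (active when $\alpha<\infty$) furnishes the margin needed to re-absorb the residual $\norm{\sqrt{\rho}\dot{u}}_{L^2}$-piece via Young's inequality.

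Next, I would multiply $\eqref{CNS-eq}_1$ by $2\rho$ to obtain $\tfrac{d}{dt}\int\rho^2+\int\rho^2\div u=0$; H\"older with $\rho\in L^\alpha$ together with an $L^2$--$L^6$ interpolation of $\nabla u$ (again via the Lam\'e estimate) would yield $\tfrac{d}{dt}\norm{\rho}_{L^2}^2\leq C(1+\norm{\nabla u}_{L^2}^2+\norm{\rho}_{L^2}^2)+\epsilon\norm{\sqrt{\rho}\dot{u}}_{L^2}^2$ provided $\alpha$ is sufficiently large. The decisive use of $\norm{\rho}_{L^2}$ occurs in the pressure-coupling energy $\int P^2=a^2\int\rho^{2\gamma}$: by the interpolation $\norm{\rho}_{L^{2\gamma}}^{2\gamma}\leq\norm{\rho}_{L^2}^{2\gamma(1-\eta)}\norm{\rho}_{L^\alpha}^{2\gamma\eta}$ (valid once $\alpha\geq 2\gamma$), the $L^\alpha$-factor is absorbed into $M^*$ and what remains is a polynomial in $\norm{\rho}_{L^2}^2$. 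This is the substitute for $\rho_0\in L^1$ alluded to in the introduction as the \emph{clever combination}.

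Finally, adding the two differential inequalities with appropriate weights and choosing $\epsilon$ small enough produces
\[
\tfrac{d}{dt}\Phi(t)+\tfrac{1}{2}\norm{\sqrt{\rho}\dot{u}}_{L^2}^2\leq C\bigl(1+\norm{u(t)}_{L^{r,\infty}}^s\bigr)\Phi(t),\qquad \Phi(t)\sim 1+\norm{\nabla u(t)}_{L^2}^2+\norm{\rho(t)}_{L^2}^2,
\]
so that Gronwall together with $\norm{u}_{L^s(0,T;L^{r,\infty})}\leq M^*$ from \eqref{assumption3} gives \eqref{u-1rd-CNS-u}. The main obstacle I anticipate is the quantitative bookkeeping in the first step: verifying that ``sufficiently large $\alpha$'' makes the interpolation exponents appearing in both the pressure coupling $\int P^2$ and the dissipative term $\int\rho|u|^2|\nabla u|^2$ simultaneously compatible with the strict index condition \eqref{index-blowup3}---this is the delicate trade-off that lets the $L^\alpha$-only assumption on $\rho$ replace the $L^1$ integrability exploited by Wang.
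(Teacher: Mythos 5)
Your proposal follows essentially the same route as the paper's proof: a pairing of the $\tfrac{d}{dt}\int\rho^2$ inequality (obtained from the renormalized continuity equation, with $\int\rho^p\lesssim\int\rho^2+1$ furnished by interpolating against $\rho\in L^\infty_tL^\alpha_x$) with the $u_t$-tested momentum equation, rewriting the pressure work via $P_t+\div(Pu)+(\gamma-1)P\div u=0$ and the effective viscous flux $G$, absorbing $\epsilon\|\sqrt\rho\dot u\|_{L^2}^2$ by the elliptic bound $\|\nabla G\|_{L^2}\lesssim\|\rho\dot u\|_{L^2}$, bounding $\int\rho|u|^2|\nabla u|^2$ by Lorentz H\"older/interpolation plus an $L^6$ elliptic estimate on $\nabla u$, and closing with Gronwall and $\|u\|_{L^s(0,T;L^{r,\infty})}\leq M^*$. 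The only cosmetic deviations are that you explicitly extract the time derivative of $\tfrac{1}{2(2\mu+\lambda)}\int P^2$ (the paper leaves that contribution as $\tfrac{1}{2(2\mu+\lambda)}\int P^2\div u$ on the right-hand side — algebraically equivalent since $\tfrac{d}{dt}\int P^2=-(2\gamma-1)\int P^2\div u$), and you propose the Lam\'e decomposition $u=g+h$ for $\|\nabla u\|_{L^6}$ where the paper instead uses the Hodge-type bound $\|\nabla u\|_{L^6}\lesssim\|G\|_{L^6}+\|\omega\|_{L^6}+\|P\|_{L^6}$ together with \eqref{GW-estimate1}; both yield the same estimate on $\mathbb{R}^3$.
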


\begin{proof}
  Recalling $\eqref{CNS-eq}_1$, it is easy to check that
  \begin{equation*}
    \pa_t(\rho^2)+\div(\rho^2u)+\rho^2\div u=0.
  \end{equation*}
  Then
  \begin{equation}\label{rho-L^2-CNS}
    \frac{d}{dt}\int|\rho|^2\leq\int|\div u|^2+\int\rho^4\leq\int|\div u|^2+C(\int|\rho|^2+1),
  \end{equation}
  where we have used the following arguments as
  \begin{itemize}
    \item if $\alpha=\infty$, by \eqref{assumption3}, we have
    \begin{equation}\label{rho-L^4-CNS1}
      \int\rho^4\leq C\int\rho^2,
    \end{equation}
    \item if $\alpha<\infty$, by \eqref{assumption3} and $\alpha>4$, we have
    \begin{equation}\label{rho-L^4-CNS2-1}
    \begin{aligned}
      \int\rho^4&\leq\norm{\rho}_{L^{\alpha}}^2\norm{\rho}_{L^{\frac{2\alpha}{\alpha-2}}}^2
      \leq C\norm{\rho}_{L^2}^{2(1-\frac{4}{\alpha})}\norm{\rho}_{L^4}^{\frac{8}{\alpha}}
      \leq \frac{1}{2}\norm{\rho}_{L^4}^4+C(\norm{\rho}_{L^2}^2+1),
    \end{aligned}
    \end{equation}
    which implies
    \begin{equation}\label{rho-L^4-CNS2}
      \int\rho^4\leq C(\int\rho^2+1).
    \end{equation}
  \end{itemize}
  Indeed, we can apply similar arguments to get
  \begin{equation}\label{rho-L^p-CNS}
      \int\rho^p\leq C(\int\rho^2+1),
    \end{equation}
    for any $p\in(2,\alpha)$.
  On the other hand, multiplying $\eqref{CNS-eq}_2$ by $u_t$ and then integrating the resultant equation over $\Omega=\mathbb{R}^3$, we obtain
  \begin{equation}\label{u-1rd-CNS-u1}
    \begin{aligned}
    &\quad\frac{1}{2}\frac{d}{dt}\int(\mu|\nabla u|^2+(\mu+\lambda)|\div u|^2)+\int\rho|\dot{u}|^2\\
    &=\int P\div u_t+\int\rho u\cdot\nabla u\cdot\dot{u}\\
    &\leq\frac{d}{dt}\int P\div u-\int P_t\div u+\frac{1}{4}\int\rho|\dot{u}|^2+C\int\rho|u|^2|\nabla u|^2\\
    &\leq\frac{d}{dt}\int P\div u-\int Pu\cdot\nabla\div u+\int(\gamma-1)P(\div u)^2\\
    &\qquad+\frac{1}{4}\int\rho|\dot{u}|^2+C\int\rho|u|^2|\nabla u|^2\\
    &\leq\frac{d}{dt}\int P\div u-\frac{1}{2\mu+\lambda}\int Pu\cdot\nabla G+\frac{1}{2(2\mu+\lambda)}\int P^2\div u\\
    &\qquad+(\gamma-1)\int P|\div u|^2+\frac{1}{4}\int\rho|\dot{u}|^2+C\int\rho|u|^2|\nabla u|^2,
    \end{aligned}
  \end{equation}
  where we have used the fact that
  \begin{equation*}
    P_t+\div(Pu)+(\gamma-1)P\div u=0,
  \end{equation*}
  and the effective viscous flux $G=(2\mu+\lambda)\div u-P$. Then
  \begin{itemize}
    \item if $\alpha=\infty$, by \eqref{assumption3} and \eqref{GW-estimate1}, we have
        \begin{equation}\label{u-1rd-CNS-u2-1}
          \begin{aligned}
          K_1&:=-\frac{1}{2\mu+\lambda}\int Pu\cdot\nabla G+\frac{1}{2(2\mu+\lambda)}\int P^2\div u
          +(\gamma-1)\int P|\div u|^2\\
          &\leq\epsilon\norm{\nabla G}_{L^2}^2+C_{\epsilon}\norm{P u}_{L^2}^2+C\left(\norm{\nabla u}_{L^2}^2+\norm{P^2}_{L^2}^2\right)+C\int P|\div u|^2\\
          &\leq C\epsilon\norm{\sqrt{\rho}\dot{u}}_{L^2}^2+C_{\epsilon}\norm{\rho u}_{L^2}^2+C\left(\norm{\nabla u}_{L^2}^2+\norm{\rho}_{L^2}^2\right)\\
          &\leq C\epsilon\norm{\sqrt{\rho}\dot{u}}_{L^2}^2+C_{\epsilon}\left(1+\norm{u}_{L^{r,\infty}}^s\right)\left(\norm{\rho}_{L^2}^2+1)+C(\norm{\nabla u}_{L^2}^2+\norm{\rho}_{L^2}^2\right),
          \end{aligned}
        \end{equation}
        where we have used  \eqref{Lorentz-Holder}, \eqref{Lorentz-interpolation} and \eqref{rho-L^p-CNS} to get
        \begin{equation*}
          \begin{aligned}
          \norm{\rho|u|}_{L^2}^2\lesssim\norm{u}_{L^{r,\infty}}^2\norm{\rho}_{L^{\frac{2r}{r-2},2}}^2
          \lesssim\norm{u}_{L^{r,\infty}}^2\norm{\rho}_{L^{\frac{2r_1}{r_1-2}}}\norm{\rho}_{L^{\frac{2r_2}{r_2-2}}}\lesssim
          \left(1+\norm{u}_{L^{r,\infty}}^s\right)\left(\norm{\rho}_{L^2}^2+1\right),
          \end{aligned}
        \end{equation*}
        where $r_1=r_2=r=\infty$ or $3<r_1<r<r_2<\infty$ with $\frac{2}{r}=\frac{1}{r_1}+\frac{1}{r_2}$.
    \item if $\alpha<\infty$, we have
        \begin{equation}\label{u-1rd-CNS-u2-2}
          \begin{aligned}
          K_1&=-\frac{1}{2\mu+\lambda}\int Pu\cdot\nabla G+\frac{1}{2(2\mu+\lambda)}\int P^2\div u
          +(\gamma-1)\int P|\div u|^2\\
          &\leq\epsilon\norm{\nabla G}_{L^{\frac{1}{\frac{1}{2}+\frac{1}{2\alpha}}}}^2+C_{\epsilon}\norm{P u}_{L^{\frac{1}{\frac{1}{2}-\frac{1}{2\alpha}}}}^2+C(\norm{\nabla u}_{L^2}^2+\norm{P^2}_{L^2}^2)+C\norm{\rho}_{L^{\alpha}}^{\gamma}\norm{\div u}_{L^{\frac{2}{1-\frac{\gamma}{\alpha}}}}^2\\
          &\leq C\epsilon\norm{\rho}_{L^{\alpha}}\norm{\sqrt{\rho}\dot{u}}_{L^2}^2+C_{\epsilon}\norm{ u}_{L^{r,\infty}}^2\norm{P}_{L^{\frac{1}{\frac{1}{2}-\frac{1}{2\alpha}-\frac{1}{r}},2}}^2+C\left(\norm{\nabla u}_{L^2}^2+\norm{\rho}_{L^2}^2+1\right)\\
          &\quad+C\norm{G+P}_{L^{\frac{2}{1-\frac{\gamma}{\alpha}}}}^2\\
          &\leq C\epsilon\norm{\sqrt{\rho}\dot{u}}_{L^2}^2+C_{\epsilon}\left(1+\norm{u}_{L^{r,\infty}}^s\right)\left(\norm{\rho}_{L^2}^2+1\right)+C\left(\norm{\nabla u}_{L^2}^2+\norm{\rho}_{L^2}^2+1\right),
          \end{aligned}
        \end{equation}
        where we have used \eqref{Lorentz-interpolation} and \eqref{rho-L^p-CNS} to get
        \begin{equation*}
          \norm{P}_{L^{\frac{1}{\frac{1}{2}-\frac{1}{2\alpha}-\frac{1}{r}},2}}^2\leq\norm{P}_{L^{\frac{1}{\frac{1}{2}-
          \frac{1}{2\alpha}-\frac{1}{r_1}}}}\norm{P}_{L^{\frac{1}{\frac{1}{2}-\frac{1}{2\alpha}-\frac{1}{r_2}}}}\leq C\left(\norm{\rho}_{L^2}^2+1\right),
        \end{equation*}
        and used Gagliardo-Nirenberg interpolation inequality and \eqref{GW-estimate1} to obtain
        \begin{equation*}
        \begin{aligned}
          \norm{G}_{L^{\frac{2}{1-\frac{\gamma}{\alpha}}}}^2&\leq C_{\epsilon}\norm{G}_{L^2}^2+\epsilon\norm{\nabla G}_{L^{\frac{1}{\frac{1}{2}+\frac{1}{2\alpha}}}}^2
          \leq C_{\epsilon}\left(\norm{\nabla u}_{L^2}^2+\norm{P}_{L^2}^2\right)+C\epsilon\norm{\rho}_{L^{\alpha}}\norm{\sqrt{\rho}\dot{u}}_{L^2}^2\\
          &\leq C_{\epsilon}\left(\norm{\nabla u}_{L^2}^2+\norm{\rho}_{L^2}^2+1\right)+C\epsilon\norm{\sqrt{\rho}\dot{u}}_{L^2}^2.
        \end{aligned}
        \end{equation*}
        Here $\alpha<\infty$ is sufficiently large such that
        \begin{equation*}
          \alpha>2\gamma, \quad\frac{\gamma}{\frac{1}{2}-\frac{1}{2\alpha}-\frac{1}{r}}<\alpha,\quad\frac{2}{1-\frac{\gamma}{\alpha}}<\frac{1}{\frac{1}{2}+\frac{1}{2\alpha}-\frac{1}{3}}=\frac{1}{\frac{1}{6}+\frac{1}{2\alpha}},\quad\frac{2}{1-\frac{\gamma}{\alpha}}<\alpha.
        \end{equation*}
  \end{itemize}
   Now, we turn to estimate $\displaystyle K_2:=\int\rho|u|^2|\nabla u|^2$. We divide our proofs into two cases:
   \begin{itemize}
     \item If $\alpha=\infty$, by \eqref{Lorentz-Holder}, \eqref{Lorentz-interpolation}, \eqref{Hodge-decomposition1}, \eqref{GW-estimate1} and \eqref{rho-L^p-CNS}, we have
     \begin{equation}\label{u-1rd-CNS-u3-1}
       \begin{aligned}
       K_2&=\int\rho|u|^2|\nabla u|^2\leq C\norm{u}_{L^{r,\infty}}^2\norm{\nabla u}_{L^{\frac{2r}{r-2},2}}^2\\
       &\leq C\norm{u}_{L^{r,\infty}}^2\norm{\nabla u}_{L^{\frac{2r_1}{r_1-2}}}\norm{\nabla u}_{L^{\frac{2r_2}{r_2-2}}}\\
       &\leq C\norm{u}_{L^{r,\infty}}^2\norm{\nabla u}_{L^2}^{1-\frac{3}{r_1}}\norm{\nabla u}_{L^6}^{\frac{3}{r_1}}\norm{\nabla u}_{L^2}^{1-\frac{3}{r_2}}\norm{\nabla u}_{L^6}^{\frac{3}{r_2}}\\
       &\leq C\norm{u}_{L^{r,\infty}}^2\norm{\nabla u}_{L^2}^{2-\frac{6}{r}}\norm{\nabla u}_{L^6}^{\frac{6}{r}}\\
       &\leq C_{\epsilon}\norm{u}_{L^{r,\infty}}^{\frac{2}{1-\frac{3}{r}}}\norm{\nabla u}_{L^2}^2+\epsilon\frac{1}{r}\norm{\nabla u}_{L^6}^2\\
       &\leq C_{\epsilon}\left(1+\norm{u}_{L^{r,\infty}}^s\right)\norm{\nabla u}_{L^2}^2+C\epsilon\frac{1}{r}\left(\norm{\nabla G}_{L^2}^2+\norm{\nabla w}_{L^2}^2+\norm{P}_{L^6}^2\right)\\
       &\leq C_{\epsilon}\left(1+\norm{u}_{L^{r,\infty}}^s\right)\norm{\nabla u}_{L^2}^2+C\epsilon\frac{1}{r}\left(\norm{\sqrt{\rho}\dot{u}}_{L^2}^2+\norm{\rho}_{L^2}^2+1\right),
       \end{aligned}
     \end{equation}
      where $r_1=r_2=r=\infty$ or $3<r_1<r<r_2<\infty$ with $\frac{2}{r}=\frac{1}{r_1}+\frac{1}{r_2}$.
     \item If $\alpha<\infty$, we can apply similar arguments to get
     \begin{equation}\label{u-1rd-CNS-u3-2}
       \begin{aligned}
       K_2&=\int\rho|u|^2|\nabla u|^2\leq \norm{\rho}_{L^{\alpha,\infty}}\norm{u}_{L^{r,\infty}}^2\norm{\nabla u}_{L^{\frac{2\bar{r}}{\bar{r}-2},2}}^2\\
       &\leq C\norm{u}_{L^{r,\infty}}^2\norm{\nabla u}_{L^{\frac{2r_1}{r_1-2}}}\norm{\nabla u}_{L^{\frac{2r_2}{r_2-2}}}\\
       &\leq C\norm{u}_{L^{r,\infty}}^2\norm{\nabla u}_{L^2}^{1-\frac{1}{r_1}\frac{1}{\frac{1}{3}-\frac{1}{2\alpha}}}\norm{\nabla u}_{L^{\frac{1}{\frac{1}{6}+\frac{1}{2\alpha}}}}^{\frac{1}{r_1}\frac{1}{\frac{1}{3}-\frac{1}{2\alpha}}}\norm{\nabla u}_{L^2}^{1-\frac{1}{r_2}\frac{1}{\frac{1}{3}-\frac{1}{2\alpha}}}\norm{\nabla u}_{L^{\frac{1}{\frac{1}{6}+\frac{1}{2\alpha}}}}^{\frac{1}{r_2}\frac{1}{\frac{1}{3}-\frac{1}{2\alpha}}}\\
       &\leq C\norm{u}_{L^{r,\infty}}^2\norm{\nabla u}_{L^2}^{2-\frac{2}{\bar{r}}\frac{1}{\frac{1}{3}-\frac{1}{2\alpha}}}\norm{\nabla u}_{L^{\frac{1}{\frac{1}{6}+\frac{1}{2\alpha}}}}^{\frac{2}{\bar{r}}\frac{1}{\frac{1}{3}-\frac{1}{2\alpha}}}\\
       &\leq C_{\epsilon}\norm{u}_{L^{r,\infty}}^{\frac{2}{1-\frac{1}{\bar{r}}\frac{1}{\frac{1}{3}-\frac{1}{2\alpha}}}}\norm{\nabla u}_{L^2}^2+\epsilon\frac{1}{\bar{r}}\norm{\nabla u}_{L^{\frac{1}{\frac{1}{6}+\frac{1}{2\alpha}}}}^2\\
       &\leq C_{\epsilon}(1+\norm{u}_{L^{r,\infty}}^s)\norm{\nabla u}_{L^2}^2+C\epsilon\frac{1}{\bar{r}}\left(\norm{\nabla G}_{L^{\frac{1}{\frac{1}{2}+\frac{1}{2\alpha}}}}^2+\norm{\nabla w}_{L^{\frac{1}{\frac{1}{2}+\frac{1}{2\alpha}}}}^2+\norm{P}_{L^{\frac{1}{\frac{1}{6}+\frac{1}{2\alpha}}}}^2\right)\\
       &\leq C_{\epsilon}\left(1+\norm{u}_{L^{r,\infty}}^s\right)\norm{\nabla u}_{L^2}^2+C\epsilon\frac{1}{\bar{r}}\left(\norm{\rho}_{L^{\alpha}}\norm{\sqrt{\rho}\dot{u}}_{L^2}^2+\norm{\rho}_{L^2}^2+1\right)\\
       &\leq C_{\epsilon}\left(1+\norm{u}_{L^{r,\infty}}^s\right)\norm{\nabla u}_{L^2}^2+C\epsilon\frac{1}{\bar{r}}\left(\norm{\sqrt{\rho}\dot{u}}_{L^2}^2+\norm{\rho}_{L^2}^2+1\right),
       \end{aligned}
     \end{equation}
     where $\frac{1}{\bar{r}}=\frac{1}{r}+\frac{1}{2\alpha}$ and $3<r_1<\bar{r}<r_2<\infty$ with $\frac{2}{\bar{r}}=\frac{1}{r_1}+\frac{1}{r_2}$. Here $\alpha$ is sufficiently large such that
     \begin{equation*}
       \frac{2\bar{r}}{\bar{r}-2}<\frac{1}{\frac{1}{6}+\frac{1}{2\alpha}},\quad\frac{2}{1-\frac{1}{\bar{r}}\frac{1}{\frac{1}{3}-\frac{1}{2\alpha}}}\leq s,\quad\frac{\gamma}{\frac{1}{6}+\frac{1}{2\alpha}}<\alpha.
     \end{equation*}
   \end{itemize}
  \par Finally substituting \eqref{u-1rd-CNS-u2-1}, \eqref{u-1rd-CNS-u2-2}, \eqref{u-1rd-CNS-u3-1} and \eqref{u-1rd-CNS-u3-2} into \eqref{u-1rd-CNS-u1}, and using \eqref{rho-L^2-CNS} and Gronwall inequality, we can prove \eqref{u-1rd-CNS-u}, and thus complete the proof of Lemma \ref{lem-u-1rd-CNS-u}.
\end{proof}
The following lemma concerns the bound of $\norm{\sqrt{\rho}\dot{u}}_{L^{\infty}(0,T;L^2)}$.
\begin{lem}\label{lem-u-2rd-CNS-u}
  Under the assumptions of Theorem \ref{thm-blowup-CNS} and \eqref{assumption3}, it holds that
  \begin{equation}\label{u-2rd-CNS-u}
    \sup_{t\in[0,T]}\norm{\sqrt{\rho}\dot{u}}_{L^2}^2+\int_0^T\norm{\nabla\dot{u}}_{L^2}^2dt\leq C,
  \end{equation}
  for any $T\in(0,T^*)$.
\end{lem}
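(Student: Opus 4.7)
The plan is the classical ``differentiate-the-momentum-equation along particle trajectories'' argument adapted to the absence of $\rho_0\in L^1$. Concretely, applying the operator $\pa_t(\cdot)+\div(\cdot\,u)$ to the $j$-th component of $\eqref{CNS-eq}_2$, which reads $\rho\dot u^j=\mu\Delta u^j+(\mu+\lambda)\pa_j\div u-\pa_j P$, and using $\eqref{CNS-eq}_1$ to convert the left-hand side into $\rho\ddot u^j$, I would test the resulting equation against $\dot u^j$ and integrate by parts to arrive at an identity of the form
\[
\tfrac12\tfrac{d}{dt}\!\int\rho|\dot u|^2+\mu\!\int|\nabla\dot u|^2+(\mu+\lambda)\!\int|\div\dot u|^2=\mathcal R,
\]
where $\mathcal R$ collects the standard commutator terms: schematically $\int|\nabla u|^2|\nabla\dot u|$, $\int P\,|\nabla u|\,|\nabla\dot u|$, $\int\rho|u||\nabla u||\dot u|$, together with a pressure piece that I would rewrite using $P_t=-u\cdot\nabla P-\gamma P\div u$ so that no $\pa_t P$ remains.

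Second step, I would bound $\mathcal R$ term-by-term. H\"older gives $|\int|\nabla u|^2|\nabla\dot u||+|\int P|\nabla u||\nabla\dot u||\leq \epsilon\norm{\nabla\dot u}_{L^2}^2+C_\epsilon(\norm{\nabla u}_{L^4}^4+\norm{P}_{L^4}^4)$. To estimate $\norm{\nabla u}_{L^4}$ I would use the effective viscous flux decomposition $\nabla u\sim G+\omega+P$ combined with \eqref{GW-estimate1} and Gagliardo--Nirenberg, which yields
\[
\norm{\nabla u}_{L^4}^4\lesssim \norm{\nabla u}_{L^2}\bigl(\norm{\sqrt\rho\dot u}_{L^2}^3+\norm{P}_{L^6}^3+\norm{\nabla u}_{L^2}^3\bigr);
\]
with Lemma \ref{lem-u-1rd-CNS-u} at hand, this reduces the quartic piece to a multiple of $\norm{\sqrt\rho\dot u}_{L^2}^2\cdot\int\rho|\dot u|^2$ plus lower-order terms. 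For the convective term $\int\rho|u||\nabla u||\dot u|$ I would deploy the same Lorentz-space interpolation used in \eqref{u-1rd-CNS-u3-1}--\eqref{u-1rd-CNS-u3-2} to absorb $u$ against $\norm{u}_{L^{r,\infty}}$, thereby generating the integrable-in-time factor $\norm{u}_{L^{r,\infty}}^s$. The powers of $P=a\rho^\gamma$ would be controlled through \eqref{rho-L^p-CNS}, which reduces everything to the $L^2$ bound on $\rho$ already secured in \eqref{u-1rd-CNS-u}.

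After collecting all contributions and choosing $\epsilon$ small to absorb $\epsilon\norm{\nabla\dot u}_{L^2}^2$ into the dissipation, I expect to obtain a differential inequality
\[
\tfrac{d}{dt}\!\int\rho|\dot u|^2+\tfrac{\mu}{2}\!\int|\nabla\dot u|^2\leq C\bigl(1+\norm{u}_{L^{r,\infty}}^s+\norm{\sqrt\rho\dot u}_{L^2}^2\bigr)\!\int\rho|\dot u|^2+C\Phi(t),
\]
with $\Phi\in L^1(0,T^*)$ by \eqref{u-1rd-CNS-u}, to which Gronwall's inequality (using again that $\int_0^T\norm{\sqrt\rho\dot u}_{L^2}^2\,dt\leq C$) delivers \eqref{u-2rd-CNS-u}.

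The main obstacle, as in Lemma \ref{lem-u-1rd-CNS-u}, is that without a lower bound on $\rho$ and without the crutch $\rho_0\in L^1$ of \cite{Wang2020} one cannot directly tame terms like $\int P^4$ or $\int|\nabla u|^4$. The key device is to substitute $\nabla u$ by $G+\omega+P$ (paying only $\norm{\sqrt\rho\dot u}_{L^2}$ via \eqref{GW-estimate1}) and to feed large powers of $P$ into the interpolation bound \eqref{rho-L^p-CNS}, so that every estimate ultimately closes against the already-obtained $\norm{\rho}_{L^\infty(L^2)}+\norm{\nabla u}_{L^\infty(L^2)}+\norm{\sqrt\rho\dot u}_{L^2(L^2)}$ from Lemma \ref{lem-u-1rd-CNS-u}. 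A secondary technical point is that for $\alpha<\infty$ the exponent $\alpha$ must be taken sufficiently large for the Lorentz-interpolation chain to fit into the available $L^p$-range for $\rho$, exactly as in the proof of Lemma \ref{lem-u-1rd-CNS-u}.
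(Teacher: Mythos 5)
Your proposal follows essentially the same route as the paper: apply $\pa_t+\div(\cdot\,u)$ to the momentum equation, test against $\dot u^j$, integrate by parts to produce $\frac12\frac{d}{dt}\int\rho|\dot u|^2+\mu\int|\nabla\dot u|^2$ plus commutator terms, estimate $\norm{\nabla u}_{L^4}^4$ through the effective viscous flux decomposition $\nabla u\sim G,\omega,P$ and \eqref{GW-estimate1}, control the $\rho$-powers via \eqref{rho-L^p-CNS}, and close with Gronwall using $\int_0^T\norm{\sqrt{\rho}\dot u}_{L^2}^2\,dt\leq C$ from Lemma~\ref{lem-u-1rd-CNS-u}. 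One small inaccuracy: the convective commutator term $\int\rho|u||\nabla u||\dot u|$ you list in $\mathcal{R}$ does not actually arise here — after integration by parts the convective pieces $u\cdot\nabla\dot u\cdot\Delta u$ and $u\cdot\nabla(\nabla\dot u):\nabla u$ cancel (see \eqref{u-2rd-CNS-u2-2} and \eqref{u-2rd-CNS-u2-3}), so the surviving commutator terms are all of the form $\int|\nabla u|^2|\nabla\dot u|$ and $\int P|\nabla u||\nabla\dot u|$. Consequently the paper's differential inequality \eqref{u-2rd-CNS-u3} has the simpler form $\frac{d}{dt}\int\rho|\dot u|^2+\mu\int|\nabla\dot u|^2\leq C\norm{\sqrt{\rho}\dot u}_{L^2}^4+C$, with no $\norm{u}_{L^{r,\infty}}^s$ factor, and the Lorentz-interpolation machinery is not needed at this stage. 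This does not change the validity or the structure of the argument.
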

\begin{proof}
  Due to the compatibility condition \eqref{compatibility-condition}, we can define $\sqrt{\rho}\dot{u}|_{t=0}=g_1\in L^2$. Applying $\dot{u}^j[\pa_t+\div(u\cdot)]$ to the $j$th component of $\eqref{CNS-eq}_2$, and then integrating the resultant equation over $\Omega$, we have from integration by parts that
  \begin{equation}\label{u-2rd-CNS-u1}
  \begin{aligned}
    \frac{d}{dt}\int\frac{1}{2}\rho|\dot{u}|^2&=\int[-\dot{u}^j[\pa_jP_t+\div(\pa_jPu)]+\mu\dot{u}^j[\Delta u_t^j+\div(u\Delta u^j)]\\
    &\quad+(\mu+\lambda)\dot{u}^j[\pa_j\div u_t+\div(u\pa_j\div u)]]\\
    &=\sum_{i=1}^3A_i.
  \end{aligned}
  \end{equation}
  For the term $A_1$, using integration by parts, we have
  \begin{equation}\label{u-2rd-CNS-u2-1}
    \begin{aligned}
      A_1&=\int\div\dot{u}P_t+u\cdot\nabla\dot{u}\cdot\nabla P\\
      &=\int-\gamma\div\dot{u}P\div u-\div\dot{u}u\cdot\nabla P+u\cdot\nabla\dot{u}\cdot\nabla P\\
      &=\int-\gamma\div\dot{u}P\div u+P\div(\div\dot{u}u)-P\div(u\cdot\nabla\dot{u})\\
      &\lesssim\norm{\nabla u}_{L^2}\norm{\nabla\dot{u}}_{L^2}\\
      &\leq\epsilon\norm{\nabla\dot{u}}_{L^2}^2+C_{\epsilon}\norm{\nabla u}_{L^2}^2.
    \end{aligned}
  \end{equation}
  Similarly, we have
  \begin{equation}\label{u-2rd-CNS-u2-2}
    \begin{aligned}
    A_2&=-\mu\int\nabla\dot{u}:\nabla u_t+u\cdot\nabla\dot{u}\cdot\Delta u\\
    &=-\mu\int|\nabla\dot{u}|^2-\nabla\dot{u}:\nabla u\cdot\nabla u-\nabla\dot{u}:(u\cdot\nabla)\nabla u+u\cdot\nabla\dot{u}\cdot\Delta u\\
    &=-\mu\int|\nabla\dot{u}|^2-\nabla\dot{u}:\nabla u\cdot\nabla u+\div u\nabla\dot{u}:\nabla u+u\cdot\nabla(\nabla\dot{u}):\nabla u+u\cdot\nabla\dot{u}\cdot\Delta u\\
    &=-\mu\int|\nabla\dot{u}|^2-\nabla\dot{u}:\nabla u\cdot\nabla u+\div u\nabla\dot{u}:\nabla u-\nabla u\cdot\nabla\dot{u}:\nabla u\\
    &\leq-\frac{3\mu}{4}\norm{\nabla\dot{u}}_{L^2}^2+C\norm{\nabla u}_{L^4}^4,
    \end{aligned}
  \end{equation}
  and
  \begin{equation}\label{u-2rd-CNS-u2-3}
    \begin{aligned}
    A_3&=-(\mu+\lambda)\int\div\dot{u}\div u_t+u\cdot\nabla\dot{u}\cdot\nabla\div u\\
    &=-(\mu+\lambda)\int(\div\dot{u})^2-\div\dot{u}\div(u\cdot\nabla u)+u\cdot\nabla\dot{u}\cdot\nabla\div u\\
    &=-(\mu+\lambda)\int(\div\dot{u})^2-\div\dot{u}\nabla u:\nabla u^t+\div\dot{u}(\div u)^2+u\cdot\nabla\div\dot{u}\div u+u\cdot\nabla\dot{u}\cdot\nabla\div u\\
    &=-(\mu+\lambda)\int(\div\dot{u})^2-\div\dot{u}\nabla u:\nabla u^t+\div\dot{u}(\div u)^2-\div u\nabla u:\nabla\dot{u}^t\\
    &\leq-\frac{\mu+\lambda}{2}\norm{\div\dot{u}}_{L^2}^2+\epsilon\norm{\nabla\dot{u}}_{L^2}^2
    +C_{\epsilon}\norm{\nabla u}_{L^4}^4.
    \end{aligned}
  \end{equation}
  Then, plugging \eqref{u-2rd-CNS-u2-1}-\eqref{u-2rd-CNS-u2-3} into \eqref{u-2rd-CNS-u1} and taking $\epsilon>0$ small enough, we obtain
  \begin{equation}\label{u-2rd-CNS-u3}
  \begin{aligned}
    \frac{d}{dt}\int\rho|\dot{u}|^2+\mu\int|\nabla\dot{u}|^2
    &\leq C\norm{\nabla u}_{L^2}^2+C\norm{\nabla u}_{L^4}^4\\
    &\leq C+C\norm{\nabla u}_{L^2}^{4-\frac{1}{\frac{1}{3}-\frac{1}{2\alpha}}}\norm{\nabla u}_{L^{\frac{1}{\frac{1}{6}+\frac{1}{2\alpha}}}}^{\frac{1}{\frac{1}{3}-\frac{1}{2\alpha}}}\\
    &\leq C+C\left(\norm{\nabla G}_{L^{\frac{1}{\frac{1}{2}+\frac{1}{2\alpha}}}}^4+\norm{\nabla w}_{L^{\frac{1}{\frac{1}{2}+\frac{1}{2\alpha}}}}^4+\norm{P}_{L^{\frac{1}{\frac{1}{6}+\frac{1}{2\alpha}}}}^4\right)\\
    &\leq C+C(\norm{\rho}_{L^{\alpha}}^2\norm{\sqrt{\rho}\dot{u}}_{L^2}^4+\norm{\rho}_{L^2}^4+1)\\
    &\leq C\norm{\sqrt{\rho}\dot{u}}_{L^2}^4+C,
  \end{aligned}
  \end{equation}
 where we have used \eqref{Hodge-decomposition1}, \eqref{GW-estimate1}, \eqref{rho-L^p-CNS}, \eqref{assumption3} and \eqref{u-1rd-CNS-u}. Then, applying Gronwall inequality yields \eqref{u-2rd-CNS-u} immediately. Therefore, the proof of Lemma \ref{lem-u-2rd-CNS-u} is completed.
\end{proof}

Next, we bound the density $\rho$.

\begin{lem}\label{lem-rho-bound}
   Under the assumptions of Theorem \ref{thm-blowup-CNS} and \eqref{assumption3}, it holds that
   \begin{equation}\label{rho-bound}
     \sup_{t\in[0,T]}\norm{\rho}_{L^{\infty}}\leq C,
   \end{equation}
   for any $T\in(0,T^*)$.
\end{lem}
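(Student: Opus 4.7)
\vspace{2mm}\noindent\textbf{Proof proposal.} The plan is to apply Zlotnik's ODE inequality along particle paths, exploiting the transport structure of the mass equation against the damping supplied by the pressure. Rewriting $\eqref{CNS-eq}_1$ as $\dot\rho+\rho\div u=0$ and using the effective viscous flux identity $(2\mu+\lambda)\div u=G+a\rho^\gamma$ (recall $\tilde\rho=0$ throughout this section), one obtains, along the flow $X(t,x_0)$ defined by $\dot X=u(t,X)$, $X(0,x_0)=x_0$,
\begin{equation*}
  \frac{d}{dt}\ln\rho(t,X(t,x_0))+\frac{a}{2\mu+\lambda}\rho(t,X(t,x_0))^\gamma=-\frac{G(t,X(t,x_0))}{2\mu+\lambda}.
\end{equation*}
Setting $y(t):=\ln\rho(t,X(t,x_0))$ and $b(t):=-\frac{1}{2\mu+\lambda}\int_0^t G(s,X(s,x_0))\,ds$, this reads $y'(t)+g(y(t))=b'(t)$ with $g(y):=\frac{a}{2\mu+\lambda}e^{\gamma y}\to\infty$ as $y\to\infty$. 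Zlotnik's lemma then yields $\sup_{t\in[0,T)}y(t)\leq C$, and hence $\rho\leq C$ pointwise (since the strong-solution regularity makes the flow a diffeomorphism), provided $b(t_2)-b(t_1)\leq N_0+N_1(t_2-t_1)$ holds for constants $N_0,N_1$ independent of $x_0$ and of $0\leq t_1\leq t_2<T^*$.

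By Cauchy--Schwarz this reduces to proving $\int_0^{T^*}\norm{G(t)}_{L^\infty}^2\,dt\leq C$, as then $\int_{t_1}^{t_2}|G(s,X(s))|\,ds\leq C(t_2-t_1)^{1/2}\leq 1+C^2(t_2-t_1)$. To control $\norm{G}_{L^\infty}$ I will combine the Gagliardo--Nirenberg interpolation in $\mathbb{R}^3$,
\begin{equation*}
  \norm{G}_{L^\infty}\lesssim\norm{G}_{L^2}^{1-\sigma}\norm{\nabla G}_{L^q}^{\sigma},\qquad \sigma=\frac{3q}{5q-6},\ q\in(3,6],
\end{equation*}
with the elliptic estimate \eqref{GW-estimate1}, $\norm{\nabla G}_{L^q}\lesssim\norm{\rho\dot u}_{L^q}$. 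Choosing $q=\frac{6\alpha}{\alpha+6}$---which lies in $(3,6)$ precisely when $\alpha>6$ and tends to $6$ as $\alpha\to\infty$---H\"older's inequality in $x$ with exponents $\alpha$ and $6$, followed by Sobolev $\norm{\dot u}_{L^6}\lesssim\norm{\nabla\dot u}_{L^2}$, yields $\norm{\nabla G}_{L^q}\lesssim\norm{\rho}_{L^\alpha}\norm{\nabla\dot u}_{L^2}\leq C\norm{\nabla\dot u}_{L^2}$. Since $\norm{G}_{L^2}\lesssim\norm{\nabla u}_{L^2}+\norm{P}_{L^2}\leq C$ by Lemma \ref{lem-u-1rd-CNS-u} and \eqref{rho-L^p-CNS}, and $\int_0^{T^*}\norm{\nabla\dot u}_{L^2}^2\,dt\leq C$ by Lemma \ref{lem-u-2rd-CNS-u}, H\"older in time (using $2\sigma<2$) closes the estimate.

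The main obstacle, and the reason Theorem \ref{thm-blowup-CNS} demands $\alpha\geq\alpha_0$ sufficiently large, is precisely this need to place $\rho\dot u$ in some $L^q$ with $q>3$ so that Morrey-type interpolation governs $\norm{G}_{L^\infty}$: for small $\alpha$ the admissible range of $q$ collapses, and the argument cannot be closed without the very $L^\infty$ density bound we are attempting to prove. In the degenerate case $\alpha=\infty$ the conclusion is already contained in the hypothesis \eqref{assumption3}, so no ODE argument is needed.
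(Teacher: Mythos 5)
Your proof is correct, and it converges on exactly the same analytic core as the paper's: you choose the same interpolation exponent $q=\frac{6\alpha}{\alpha+6}\in(3,6)$, the same Gagliardo--Nirenberg/Sobolev chain $\norm{G}_{L^\infty}\lesssim\norm{G}_{L^2}+\norm{\nabla G}_{L^q}\lesssim 1+\norm{\rho}_{L^\alpha}\norm{\nabla\dot u}_{L^2}$, and close via $\int_0^{T^*}\norm{\nabla\dot u}_{L^2}^2\,dt\leq C$ from Lemma \ref{lem-u-2rd-CNS-u}. Where you diverge is in the final conversion of the $L^\infty_x$ control on $G$ into a bound on $\norm{\rho}_{L^\infty}$. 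The paper multiplies the continuity equation by $p\rho^{p-1}$, drops the favourable-sign pressure term $-\frac{p-1}{2\mu+\lambda}\int\rho^p P\le 0$, applies Gronwall to $\norm{\rho}_{L^p}$ using only $\int_0^{T^*}\norm{G}_{L^\infty}\,dt\leq C$, and then sends $p\to\infty$; this is elementary and exploits no damping. You instead integrate $\ln\rho$ along particle trajectories, retain the $e^{\gamma y}$-damping supplied by the pressure, and invoke Zlotnik's ODE lemma, needing the marginally stronger (but equally available) $\int_0^{T^*}\norm{G}_{L^\infty}^2\,dt\leq C$. Both are sound; the characteristics-plus-Zlotnik route is the standard machinery when one wants density bounds uniform up to $T^*=\infty$, where the pressure damping is genuinely needed, while the paper's $L^p$--Gronwall route is shorter and entirely adequate on the fixed finite interval $[0,T^*)$. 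Your treatment of the trivial case $\alpha=\infty$, and your observation that the largeness requirement on $\alpha$ is precisely what pushes $q$ above $3$ so that $W^{1,q}\hookrightarrow L^\infty$, are both consistent with the paper's hypotheses.
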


\begin{proof}
  This result is a direct observation from the mass conservation equation $\eqref{CNS-eq}_1$. Indeed, multiplying $\eqref{CNS-eq}_1$ by $p\rho^{p-1}$ with $p>1$ and integrating over $\Omega=\mathbb{R}^3$, we have
  \begin{equation}\label{rho-bound-estimate1}
    \begin{aligned}
    \frac{d}{dt}\int\rho^p&=-(p-1)\int\rho^p\div u=-\frac{p-1}{2\mu+\lambda}\int\rho^pP-\frac{p-1}{2\mu+\lambda}\int\rho^pG\\
    &\leq\frac{p-1}{2\mu+\lambda}\norm{G}_{L^{\infty}}\int\rho^p.
    \end{aligned}
  \end{equation}
  Due to the Gagliardo-Nirenberg inequality, \eqref{u-1rd-CNS-u}, \eqref{rho-L^p-CNS}, \eqref{GW-estimate1} and \eqref{u-2rd-CNS-u}, we have
  \begin{equation}\label{G-bound}
    \begin{aligned}
    \norm{G}_{L^{\infty}}&\leq C\left(\norm{G}_{L^2}+\norm{\nabla G}_{L^{\frac{1}{\frac{1}{6}+\frac{1}{\alpha}}}}\right)\\
    &\leq C\left(1+\norm{\rho}_{L^{\alpha}}\norm{\dot{u}}_{L^6}\right)\\
    &\leq C\left(1+\norm{\nabla\dot{u}}_{L^2}\right).
    \end{aligned}
  \end{equation}
  Then, combining \eqref{G-bound} with \eqref{rho-bound-estimate1}, we obtain
  \begin{equation}\label{rho-bound-estimate2}
    \frac{d}{dt}\norm{\rho}_{L^p}\leq\frac{C(p-1)}{p}(1+\norm{\nabla\dot{u}}_{L^2})\norm{\rho}_{L^p}\leq C(1+\norm{\nabla\dot{u}}_{L^2})\norm{\rho}_{L^p}.
  \end{equation}
  Using \eqref{u-2rd-CNS-u} and Gronwall inequality, and letting $p\rightarrow\infty$, we can get \eqref{rho-bound}, and thus complete the proof of Lemma \ref{lem-rho-bound}.
\end{proof}

Once the crucial uniform upper bound of the density \eqref{rho-bound} is obtained, one can adjust the methods of
\cite{Wang2020} and \cite{Huang2011-2} to prove the following lemma.

\begin{lem}
  Under the assumptions of Theorem \ref{thm-blowup-CNS} and \eqref{assumption3}, it holds
   \begin{equation}\label{u-rho-final}
     \sup_{t\in[0,T]}(\norm{\rho}_{H^1\cap W^{1,q}}+\norm{\nabla u}_{H^1})\leq C,
   \end{equation}
   for any $T\in(0,T^*)$.
\end{lem}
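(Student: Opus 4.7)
The plan is to establish \eqref{u-rho-final} by combining a transport-type estimate for $\nabla\rho$ with elliptic regularity for $\nabla^{2}u$, closing via a Beale-Kato-Majda-type logarithmic Gronwall argument that exploits the bounds already obtained in Lemmas \ref{lem-u-1rd-CNS-u}, \ref{lem-u-2rd-CNS-u} and \ref{lem-rho-bound}.

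First I would differentiate $\eqref{CNS-eq}_{1}$ in $x$, multiply by $|\nabla\rho|^{p-2}\nabla\rho$ for $p\in\{2,q\}$, and integrate by parts in the convection term. After standard manipulation this yields
\begin{equation*}
\frac{d}{dt}\norm{\nabla\rho}_{L^{p}}\leq C(1+\norm{\nabla u}_{L^{\infty}})\norm{\nabla\rho}_{L^{p}}+C\norm{\rho}_{L^{\infty}}\norm{\nabla^{2}u}_{L^{p}}.
\end{equation*}
The Lam\'{e} estimate \eqref{Lame-estimate} applied to $\eqref{CNS-eq}_{2}$ gives $\norm{\nabla^{2}u}_{L^{p}}\leq C(\norm{\nabla P}_{L^{p}}+\norm{\rho\dot{u}}_{L^{p}})\leq C(\norm{\nabla\rho}_{L^{p}}+\norm{\rho\dot{u}}_{L^{p}})$, so the above reduces to
\begin{equation*}
\frac{d}{dt}\norm{\nabla\rho}_{L^{p}}\leq C(1+\norm{\nabla u}_{L^{\infty}})\norm{\nabla\rho}_{L^{p}}+C\norm{\rho\dot{u}}_{L^{p}}.
\end{equation*}
For $p=2$, $\norm{\rho\dot{u}}_{L^{2}}\leq\norm{\rho}_{L^{\infty}}^{1/2}\norm{\sqrt{\rho}\dot{u}}_{L^{2}}$ is uniformly bounded by Lemmas \ref{lem-u-2rd-CNS-u} and \ref{lem-rho-bound}; for $p=q\in(3,6]$, H\"{o}lder and Sobolev embedding give $\norm{\rho\dot{u}}_{L^{q}}\leq\norm{\rho}_{L^{6q/(6-q)}}\norm{\dot{u}}_{L^{6}}\leq C\norm{\nabla\dot{u}}_{L^{2}}$, which lies in $L^{2}(0,T)$ by Lemma \ref{lem-u-2rd-CNS-u}.

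Next I would control $\norm{\nabla u}_{L^{\infty}}$ through the effective viscous flux $G$ and vorticity $\omega$. Using $(2\mu+\lambda)\div u=G+P$, the Sobolev embedding $W^{1,q}\hookrightarrow L^{\infty}$ and \eqref{GW-estimate1} yield
\begin{equation*}
\norm{G}_{L^{\infty}}+\norm{\omega}_{L^{\infty}}\leq C(\norm{G}_{L^{q}}+\norm{\omega}_{L^{q}}+\norm{\rho\dot{u}}_{L^{q}})\leq C(1+\norm{\nabla\dot{u}}_{L^{2}}),
\end{equation*}
where the low-order pieces are controlled by interpolating between $L^{2}$ and $L^{6}$ via the bounds in Lemma \ref{lem-u-1rd-CNS-u}. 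Invoking the Beale-Kato-Majda-type logarithmic inequality
\begin{equation*}
\norm{\nabla u}_{L^{\infty}}\leq C(1+\norm{\div u}_{L^{\infty}}+\norm{\curl u}_{L^{\infty}})\log(e+\norm{\nabla^{2}u}_{L^{q}})+C\norm{\nabla u}_{L^{2}}+C
\end{equation*}
together with the elliptic bound $\norm{\nabla^{2}u}_{L^{q}}\leq C(\norm{\nabla\rho}_{L^{q}}+\norm{\nabla\dot{u}}_{L^{2}})$ then gives
\begin{equation*}
\norm{\nabla u}_{L^{\infty}}\leq C(1+\norm{\nabla\dot{u}}_{L^{2}})\log\bigl(e+\norm{\nabla\rho}_{L^{q}}+\norm{\nabla\dot{u}}_{L^{2}}\bigr).
\end{equation*}

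Setting $f(t):=e+\norm{\nabla\rho}_{L^{2}}+\norm{\nabla\rho}_{L^{q}}$ and inserting the previous display into the transport estimates yields a logarithmic Gronwall inequality of the form
\begin{equation*}
\frac{d}{dt}f(t)\leq C(1+\norm{\nabla\dot{u}}_{L^{2}}^{2})f(t)\log f(t)+C(1+\norm{\nabla\dot{u}}_{L^{2}}^{2}).
\end{equation*}
Since $\int_{0}^{T^{*}}\norm{\nabla\dot{u}}_{L^{2}}^{2}\,dt\leq C$ by Lemma \ref{lem-u-2rd-CNS-u} and $T^{*}<\infty$, a standard log-Gronwall argument produces a double-exponential bound $\sup_{[0,T]}f(t)\leq C$, which furnishes the uniform control of $\norm{\nabla\rho}_{L^{2}\cap L^{q}}$. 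Plugging this back into the elliptic estimate bounds $\norm{\nabla^{2}u}_{L^{2}}$, and combining with the previously established $\norm{\rho}_{L^{\infty}\cap L^{2}}\leq C$ and $\norm{\nabla u}_{L^{2}}\leq C$ completes the proof of \eqref{u-rho-final}. The main obstacle I anticipate is setting up the log-Gronwall closure cleanly, in particular absorbing $\log(e+\norm{\nabla^{2}u}_{L^{q}})$ into $\log f(t)$ modulo an $L^{1}$-in-time factor so that the double-exponential estimate stays finite on $[0,T^{*})$.
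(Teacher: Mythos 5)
Your approach is sound and is exactly what the paper intends: the paper does not actually spell out a proof of this lemma, stating only that "once the crucial uniform upper bound of the density is obtained, one can adjust the methods of \cite{Wang2020} and \cite{Huang2011-2}," and those references carry out precisely the transport-plus-Lam\'{e}-plus-BKM log-Gronwall argument you describe. The ingredients you invoke (Lemmas~\ref{lem-u-1rd-CNS-u}, \ref{lem-u-2rd-CNS-u}, \ref{lem-rho-bound}) are the right ones, the exponents in the H\"{o}lder steps check out, and the closure of the logarithmic Gronwall inequality using $\int_0^{T^*}\norm{\nabla\dot u}_{L^2}^2\,dt\le C$ is correct.
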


{\bf{Proof of Theorem Theorem \ref{thm-blowup-CNS}.}}
In conclusion,  we prove the uniform energy estimate \eqref{goal-inequality2}.
  Combining the available local existence results stated in Remark \ref{rem-local-existence} with \eqref{goal-inequality2}, it is easy to see that  the strong solution can be extended beyond $T^*$ (see \cite{Huang-Li-Wang2013} for instance), which means $T^*$ is not the maximal existence time, and thus leads to a contradiction.
Therefore, we complete the proof of Theorem \ref{thm-blowup-CNS}.

\medskip

\section*{\bf Data availability}
No data was used for the research described in the article.

\section*{\bf Conflicts of interest}

The authors declare no conflict of interest.

\end{document}